\def\moverlay{\mathpalette\mov@rlay}
\def\mov@rlay#1#2{\leavevmode\vtop{%
   \baselineskip\z@skip \lineskiplimit-\maxdimen
   \ialign{\hfil$\m@th#1##$\hfil\cr#2\crcr}}}
\newcommand{\charfusion}[3][\mathord]{
    #1{\ifx#1\mathop\vphantom{#2}\fi
        \mathpalette\mov@rlay{#2\cr#3}
      }
    \ifx#1\mathop\expandafter\displaylimits\fi}
\newtheorem{theorem}{Theorem}[section]
\newtheorem{lemma}[theorem]{Lemma}
\newtheorem{corollary}[theorem]{Corollary}
\newtheorem{proposition}[theorem]{Proposition}
\theoremstyle{definition}
\newtheorem{definition}[theorem]{Definition}
\newtheorem{example}[theorem]{Example}
\theoremstyle{remark}
\newtheorem{remark}[theorem]{Remark}
\DeclareMathOperator{\Aut}{{\text{Aut}}}
\DeclareMathOperator{\Hom}{{\text{Hom}}}
\DeclareMathOperator{\Obj}{{\text{Obj}}}
\DeclareMathOperator{\cod}{{\text{cod}}}
\DeclareMathOperator{\dom}{{\text{dom}}}
\def\ident{1}
\def\triv{\{\ident\}}
\def\NN{\mathbb N}
\def\ZZ{\mathbb Z}
\def\Si{\Sigma}
\title[Scale-multiplicative semigroups and $P$-graphs]{A graph-theoretic description of scale-multiplicative semigroups of automorphisms}
\author[C.E.~Praeger]{Cheryl E. Praeger}
\address[Cheryl E. Praeger]{
School of Physics, Mathematics and Computing\\
The University of Western Australia\\
35 Stirling Highway\\
Crawley\\
WA 6009 \\ 
Australia}%
\email{Cheryl.Praeger@uwa.edu.au}
\author[J.~Ramagge]{Jacqui Ramagge}
\address[Jacqui Ramagge]{
School of Mathematics and Statistics\\
Sydney University\\
NSW 2006\\
Australia}%
\email{Jacqui.Ramagge@sydney.edu.au}
\thanks{}
\author[G.~Willis]{George Willis}
\address[George Willis]{
School of Mathematical and Physical Sciences\\
The University of Newcastle\\
Callaghan\\
NSW 2308 \\ 
Australia}%
\email{George.Willis@newcastle.edu.au}
\subjclass[2010]{22D05, 05C25, 20B07, 20B27.} 
\keywords{totally disconnected locally compact groups, scale function, groups acting on graphs, permutation groups}
\thanks{This research was supported by Australian Research Council grants DP150100060 and DP160102323.}
\begin{document}

\begin{abstract}
It is shown that a flat subgroup, $H$, of the 
totally disconnected, locally compact group $G$ 
decomposes into a finite number of subsemigroups 
on which the scale function is multiplicative. 
The image, $P$, of a multiplicative semigroup in the quotient, 
$H/H(1)$, of $H$ by its uniscalar subgroup 
has a unique minimal generating set 
which determines a natural Cayley graph structure on $P$. 
For each compact, open subgroup $U$ of $G$, 
a graph is defined and it is shown that 
if $P$ is multiplicative over $U$ then this graph 
is a regular, rooted, strongly simple $P$-graph. 
This extends to higher rank the result of R.~M\"oller 
that $U$ is tidy for $x$ if and only if a certain graph is a regular, rooted tree.
\end{abstract}

\maketitle

\setcounter{tocdepth}{1}
\tableofcontents

\section{Introduction}

The connected component of the identity of a locally compact group is 
a normal subgroup and the quotient by this subgroup is totally disconnected and locally compact. 
Hence every locally compact group is 
an extension of a connected locally compact group by 
a totally disconnected, locally compact group. 
By the solution to Hilbert's fifth problem in 1952 
from the combined work in~\cite{Gleason52,MontZip52}, 
connected locally compact groups can be 
approximated from above by Lie groups. 
The quest for the understanding of connected locally compact groups 
could then apply linear algebraic techniques via Lie algebras. 
It was not until 1994~\cite{Wi:structure} that 
a corresponding insight was available for 
totally disconnected, locally compact groups.

In~\cite{Wi:structure}, Willis introduced the notions of 
\emph{scale function}  and \emph{tidy subgroups} 
in the context of a totally disconnected, locally compact group~$G$. 
The scale of an automorphism of~$G$ is 
analogous to an eigenvalue for a linear operator, 
with tidy subgroups being compact open subgroups of~$G$ 
that play the r\^ole of eigenspaces. Groups of automorphisms sharing a common tidy subgroup are investigated in~\cite{Wi:SimulTriang} and, following a suggestion of U.~Baumgartner,  
a subgroup of automorphisms of $G$ is said to be \emph{flat} 
if all its elements share a common tidy subgroup. 
Inner automorphisms allow us to 
transfer all these notions 
from automorphisms to elements and subgroups of~$G$. 

In~\cite{struc(tdlcG-graphs+permutations)}, 
M\"oller gave a graphical criterion for 
a compact open subgroup~$U$ 
to be tidy for an element~$x$ of~$G$. 
He defined a graph with vertices being certain cosets of~$U$, 
and proved that $U$ is tidy for $x$ if and only if the graph is a regular rooted tree.
In this paper we explore the extent to which 
M\"oller's graphical characterisation of tidiness 
can be extended to flat subgroups of~$G$. 

The graph constructed by M\"oller involves the semigroup of positive powers of~$x$ and the branching number of the regular tree is equal to the scale of~$x$. Using~$x^{-1}$ in the construction results in a different graph even though~$U$ is tidy for~$x^{-1}$ if it is tidy for~$x$. The regular branching of M\"oller's tree corresponds to the fact that the scale satisfies $s(x^n) =  s(x)^n$ for all $n\geq0$, and our extension to flat subgroups of~$G$ involves subsemigroups which are \emph{scale-multiplicative}, that is, they satisfy $s(xy) = s(x)s(y)$ for all $x,y$ in the semigroup. These semigroups of~$G$ are 
of independent interest as ingredients in the construction of geometries associated to  
totally disconnected, locally compact groups~\cite{BRW:trees}, and M\"oller's tree and its generalisation described here are expected to feature in these geometries.

Geometric intuition and examples suggest that the appropriate generalisation might have dimension greater than~$1$, such as in a product of trees. We shall see, however, that products of trees are not sufficiently general. Our main result is that if~$U$ is a compact open subgroup of~$G$ 
and $P\subset G$ is a semigroup that is 
multiplicative over~$U$ in a sense 
made precise in Definition~\ref{defn:multiplicative over V}, 
then a graph constructed from certain cosets of~$U$ is 
a regular, rooted, strongly simple $P$-graph.
Unfortunately, the converse statement is false;~\cite[Example~3.5]{Wi:SimulTriang}
provides a counterexample.

Following~\cite{BSV}, for a multiplicative semigroup $P$, a $P$-graph is defined as a category.
A graph in the classical sense is an $\NN$-graph in this more general context; 
paths in a graph have a \emph{length} corresponding to an element of~$\NN$. 
In a $P$-graph, paths have a \emph{degree} labelled by elements of~$P$
that is more like a shape than a length.
The special case of $\NN^k$-graphs, or $k$-graphs, 
have played a significant r\^ole in the study of 
$C^*$-algebras for a number of years~\cite{KumPask}.
In the general theory, a $P$-graph is a higher-rank structure like a cell complex;
we will need only the $1$-skeletons of the $P$-graphs we define.
The original context in which $P$-graphs appeared 
led to the inclusion of the condition that $P$ be quasi-lattice ordered. 
This condition is not central to the notion of a $P$-graph, 
which is fortunate since Example~\ref{ex:not_Nk} indicates that we need to consider $P$-graphs associated to semigroups that are not quasi-lattice ordered. 

The paper is organised as follows. 
In section~\ref{sec:example} we motivate and illustrate the ideas we will develop via a graph-theoretic example.
Section~\ref{sec:basics}  summarises the relevant background material and establishes notation. 
We include basic definitions and results from the general theory of 
totally disconnected, locally compact groups in subsection~\ref{sec:Willis thy}. 
The notion of a semigroup being multiplicative over a compact open subgroup 
in Definition~\ref{defn:multiplicative over V} appears also in~\cite{BRW:trees}; 
the concept is new and work on both projects was taking place in parallel. 
In subsection~\ref{sec:Roggi} we summarise 
the relevant results in~\cite{struc(tdlcG-graphs+permutations)}, 
using notation that lends itself to our generalisation. 
Section~\ref{sec:semigroups in flat groups} contains 
some known results about flat groups and 
new results about subsemigroups of flat groups. 
In particular, Proposition~\ref{prop:gen_set} 
establishes the existence of the unique minimal generating set 
referred to in the abstract and used in the construction of the $P$-graph. 
Section~\ref{sec:P-graphs} deals with $P$-graphs and establishes our main result,
Theorem~\ref{thm:P-graph G}. 
Section~\ref{sec:examples} contains examples. 
In the first example, the graph is a product of regular trees. 
The second example shows that the graph need not be 
a product of trees even when the semigroup is isomorphic to $\NN^k$. 
In the third example the semigroup is not isomorphic to $\NN^k$, 
showing the necessity of the generalisation to $P$-graphs. 
In subsection~\ref{sec:tree_prod} we describe 
conditions that ensure the $P$-graph is a product of trees. 
Finally, section~\ref{sec:questions} contains some remarks and open questions.

\section{Instructive graph-theoretic example}
\label{sec:example}  

For each possible dimension, we present a concrete example of a subdegree-finite transitive permutation group, arising as 
automorphisms of an explicitly defined locally finite graph, which  
admits a flat sub-semigroup of that dimension. 

We start with a `one-dimensional example' which fits the theory introduced by 
R\"oggi M\"oller in \cite[Section 3]{struc(tdlcG-graphs+permutations)}.
It is essentially \cite[Example 1 on page 809]{struc(tdlcG-graphs+permutations)},
but we give more details, which allow us to work with the underlying point set.
We follow the conventions of M\"oller and write group actions on the right, 
so for a group $H$ acting on a point set, and a point $v$ and element $h\in H$,
we denote by $v^h$ the image of $v$ under $h$, and by 
$v^H = \{ v^h | h\in H\}$, the $H$-orbit containing $v$. 
An $i$-arc in a graph is a sequence $(v_0,\ldots,v_i)$ of vertices such that $v_i$ is adjacent to $v_{i+1}$ and $v_i\ne v_{i+2}$ for each $i\geq 0$. An automorphism group $G$ is $i$-arc transitive if $G$ is transitive on the set of $i$-arcs.

\begin{example}\label{ex:tree}
{\rm 
Let~$d$ be a positive integer, and let~$T$ be a $(d+1)$-regular tree with 
automorphism group $G=\Aut(T)$. If $d=1$ then~$T$ is simply a two-way infinite path, 
but this degenerate situation also gives a valid, if trivial, example. Let~$v_0$ be a vertex of~$T$.
Choose $X = (v_i)_{i\in\mathbb{Z}}$, a two-way infinite path in~$T$
through $v_0$, and $x\in H$ such that $v_i^x =v_{i+1}$ for all~$i$. Then each
$\alpha_i := (v_{i-1},v_i)$ is an arc of $T$, that is to say, an ordered pair of adjacent vertices.
Let~$U:= G_{\alpha_0}$, the stabiliser of the arc  
$\alpha_0$ (or equivalently the stabiliser of the two vertices $v_0$ and $v_{-1}$), let 
\[
Y:= \bigcup_{i\geq 0} v_i^U,
\]
and let~$[Y]$ denote the subgraph of~$T$ induced on the vertex-subset~$Y$.
Note that, for each $i\geq0$, $i$-arc transitivity of~$G$ implies that $v_i^U$ consists of the~$d^i$ vertices of~$T$ at distance $i$ from $v_0$ and $i+1$ from $v_{-1}$.
Each edge of~$[Y]$ joins a vertex of~$v_i^U$ to a vertex of~$v_{i+1}^U$,
for some~$i$, and we may therefore orient each edge of~$[Y]$ `away from~$v_0$'. In this way~$[Y]$ 
becomes a directed regular rooted tree such that all edges are directed away from the root~$v_0$, all vertices have the same out-valency~$d$, and the in-valency of each vertex, except~$v_0$, is~$1$.  For each~$i$, the $U$-orbit~$v_i^U$ is the set of vertices of~$[Y]$
which are reachable from~$v_0$ by directed paths of length~$i$.  
Note that the stabiliser in~$U$ of~$v_i=v_0^{x^i}$ is~$U\cap x^{-i}Ux^i$.

The group~$G$ is totally disconnected and locally compact relative to the permutation 
topology on the vertex set $V(T)$,  and $U$ is a compact open subgroup. 
We thus have what is needed to construct the digraph~$\Gamma_+$ of
\cite[Section 3]{struc(tdlcG-graphs+permutations)} 
and to identify it with the sub-digraph~$[Y]$ of~$T$ just defined. 
Recall that the vertices of~$\Gamma_+$ are right $U$-cosets: 
precisely, and denoting~$\nu_i = Ux^i$, the vertex set~$V(\Gamma_+)$ and the (directed) edge set~$E(\Gamma_+)$ are
\[
V(\Gamma_+) := \bigcup_{i\geq 0} \nu_i^U, \quad E(\Gamma_+) := \bigcup_{i\geq 0} (\nu_i, \nu_{i+1})^U. 
\]
Identifying~$U$ with the element~$v_0$, which it stabilises, we identify~$\nu_i^u=Ux^i u$ in $V(\Gamma_+)$ with $v_i^u$, for each $i, u$ as follows.  
The map $v_i^u \rightarrow Ux^iu$ determines a well defined bijection $\varphi: V(\Gamma^+)\rightarrow Y$
since, for all~$i$ and for all $u, u'\in U$, $v_i^u = v_i^{u'}$ if and only if $u'u^{-1}$ fixes~$v_i$, or equivalently 
$u'u^{-1}\in U\cap x^{-i}U x^i$; 
since $u, u'\in U$, this holds, in turn, if and only if $Ux^iu=Ux^iu'$. 
Now each arc of~$\Gamma_+$ is of the form $(\nu_i^u, \nu_{i+1}^u) = (Ux^iu, Ux^{i+1}u)$, for some~$i\geq0$ and some~$u\in U$. 
Each such arc is the image under $\varphi$ of the vertex pair $(v_i^u, v_{i+1}^u)$ from~$Y$. 
This is an arc of~$[Y]$ (since~$x, u$ are automorphisms of~$T$), and moreover all arcs of~$[Y]$ are of this form. 
Thus~$\varphi$ is a digraph isomorphism, and so~$\Gamma_+$ is a directed regular rooted tree, rooted at~$\nu_0$,
with out-valency~$d$ and in-valency~$1$ (except for~$\nu_0$).

It follows from \cite[Theorem 3.4]{struc(tdlcG-graphs+permutations)} or \cite[Section~3]{Wi:structure} that $U$ is tidy for $x$, 
and hence, by \cite[Corollary~3]{Wi:structure}, $U$ is simultaneously tidy for each element of $H:=\langle x\rangle$, which is therefore a flat group.  In passing we note that the subgroups
\[
U_+ := \bigcap_{i=0}^\infty x^i U x^{-i},\quad U_- := \bigcap_{i=0}^\infty x^{-i} U x^{i},
\]
defined as in \cite[Definition 1]{struc(tdlcG-graphs+permutations)}, are the stabilisers of the one-way infinite
paths $X_- = (v_i)_{i\leq 0}$, and $X_+ = (v_i)_{i\geq 0}$ in $T$, respectively,
and give rise to the factorisation $U = U_- U_+$. Since $G=\Aut(T)$, it is not difficult to see that 
$U_+$ has the same orbits as $U$ in $Y$. 
}
\end{example}
  
To construct a group with flat-rank $n$, consider a set of $n$ triples
$(G_i, U_i, x_i)$, where each $G_i$ is a totally disconnected 
locally compact group,  $U_i$ is a compact open subgroup of $G_i$, $x_i\in G_i$ has infinite order, and $U_i$ is tidy for $x_i$. 
Possibly, but not necessarily, $(G_i, U_i, x_i)$ could be as in the above example.
Then the direct product $G = G_1\times \ldots,\times G_n$ is also a totally disconnected 
locally compact group, and $U = U_1\times \ldots,\times U_n$ is  a compact open subgroup of $G$
which is simultaneously tidy for each element of the group 
$H:= \{ x_1^{i_1} x_2^{i_2} \dots x_n^{i_n} \mid \  i_j\in\mathbb{Z}\} \cong \mathbb{Z}^n$, and so the group $H$ is therefore flat. 
This construction underpins 
Example 5.1 given later. Here we give a concrete example based on a Cartesian product of finitely many
trees coming from Example~\ref{ex:tree}.

For  $i=1,\dots,n$, let $\Si_i$ be a graph with vertex set $V(\Si_i)$ and edge set $E(\Si_i)$.
Define the \emph{Cartesian product} 
$\Si :=\Si_1\,\Box\,\Si_2\,\Box\,\dots\,\Box\,\Si_n$ as the graph with vertex set $V(\Si)=V(\Si_1)
\times V(\Si_2)\times\dots\times V(\Si_n)$, such that a vertex-pair $\{\nu,\tau\}$ 
is adjacent if and only if there exists $s$ such that $\nu_i=\tau_i$ for $i\ne s$ 
and $(\nu_s,\tau_s)\in E(\Si_s)$. Our example involves a Cartesian product of trees.

\begin{example}\label{ex:Cartdec} {\rm
For each $i=1,\dots,n$, let $(T_i, G_i, X_i, U_i, x_i, d_i, Y_i, \Gamma_{+,i})$ be as in Example~\ref{ex:tree},
and define the graph Cartesian product $T = T_1\,\Box\, T_2\,\Box\,\dots\,\Box\, T_n$, and group direct products
$G = G_1\times \ldots,\times G_n$ and $U = U_1\times \ldots,\times U_n$. As discussed above, 
$G$ is a totally disconnected locally compact subgroup of $\Aut(T)$, and $U$ is a compact open subgroup
which is tidy for the 
flat group 
$H:= \{ x_1^{i_1} x_2^{i_2} \dots x_n^{i_n} \mid \  i_j\in\mathbb{Z}\} \cong \mathbb{Z}^n$. Consider the scale multiplicative sub-semigroup 
$S:=\{ x_1^{i_1} x_2^{i_2} \dots x_n^{i_n} \mid \ \mbox{each}\ i_j\geq 0\}$ of $H$.  
 
To visualise the substructure of $T$ which is equivalent to $\Gamma_+$ in Example~\ref{ex:tree},
let $X_i=( v^i_j)_{j\in \mathbb{Z}}$ such that $(v^i_j)^{x_i} = v^i_{j+1}$ for all $j$, 
and such that $U_i$ is the stabiliser in
$G_i$ of $v^i_{-1}$ and~$v^i_0$.  Consider $\nu := (v^1_0,\dots,v^n_0)$ as the `base vertex' of 
$T$ (it is fixed by $U$). For $i=1,\dots,n$ let $\omega_i$ be the vertex of $T$ which is equal to 
$\nu$ in each entry except the $i^{th}$ entry, where it is equal to $v^i_{-1}$. Thus each
$\alpha^i_0:=(\omega_i,\nu)$ is an arc of $T$, and $U$ is the stabiliser in $G$ of the $n$ arcs 
$\alpha^1_0, \ldots, \alpha^n_0$. 

These arcs determine a `positive cone', namely the sub-digraph
$[Y]$ of $T$ induced on the vertex-subset $Y$ 
defined as follows: $Y$ is the union, over all $\iota = (i_1,\ldots,i_n)$ such that each 
$i_j\geq 0$, of the U-orbit containing $\nu_\iota :=(v^1_{i_1},\ldots,v^n_{i_n})$.
In other words, $Y$ is the Cartesian product $Y_1\times\ldots\times Y_n$. 
Each edge in $[Y]$ joins a vertex of some $\nu_\iota^U$
to a vertex of some `successor'  $\nu_{\iota'}^U$, where $\iota$ and $\iota'$ agree in 
all entries except one, say the~$s^{th}$, and $\iota'_s = \iota_s+1$. We think of such an 
edge as being in the `$s$-direction', and as being directed away from the `base vertex' $\nu$.  
Thus, for each $s$, the direction on the arcs of $[Y_s]$ is inherited in $[Y]$ by the arcs of 
$[Y]$ in the $s$-direction. This discussion shows that the sub-digraph $[Y]$ is the
Cartesian product  $[Y_1]\,\Box\, [Y_2]\,\Box\,\dots\,\Box\, [Y_n]$, and hence $[Y]$
is isomorphic to the Cartesian product  $\Gamma_+:=\Gamma_{+, 1}\,\Box\, 
\Gamma_{+, 2}\,\Box\,\dots\,\Box\, \Gamma_{+, n}$.  Thus $\Gamma_+$ is a Cartesian product
of regular rooted trees $\Gamma_{+, i}$ with out-valency $d_i$, and hence
$\Gamma_{+}$ has constant out-valency $\sum_{i=1}^n d_i$.

Finally we note that the $U$-orbit $\nu_\iota^U$ has cardinality 
$d(\iota) := \prod_{j=1}^n d_j^{i_j}$ and consists of all vertices of $T$ which can be reached
by directed paths of length $\ell(\iota) := \sum_{j=1}^n i_j$ which begin at $\nu$ and involve,
for each $s$, precisely $i_s$ arcs in the $s$-direction. 
}
\end{example}

\section{Background material and notation}
\label{sec:basics}

Let $G$ be a totally disconnected, locally compact group. 

\subsection{Scale function and related results}
\label{sec:Willis thy}

We present the basic results in the context of an arbitrary automorphism $\alpha$ of $G$.

\begin{definition}
The compact open subgroup $V\leq G$ is 
\emph{minimizing} for the automorphism $\alpha$ of $G$ if
$|\alpha(V) : \alpha(V)\cap V|$ is minimal over all compact, open subgroups of $G$.  
The minimum value of this index is called 
the \emph{scale} of $\alpha$ and denoted $s(\alpha)$. 
\end{definition}

Since $\alpha$ is an automorphism of $G$, if $V$ is minimizing for $\alpha$, then $s(\alpha)$ is also equal to 
$|V:V\cap \alpha^{-1}(V)|$ , the minimum length of a non-trivial $V$-orbit. Such orbits underpin M\"oller's combinatorial description in~\cite{struc(tdlcG-graphs+permutations)}.

\begin{definition}
\label{defn:tidy}
A compact, open subgroup $U\leq G$ is \emph{tidy} for $\alpha$ if 
\begin{description}
\item[\bf TA] $U = U_+U_-$, where $U_{+} = \bigcap_{k\geq0} \alpha^{+ k}(U)$, 
$U_{-} = \bigcap_{k\geq0} \alpha^{- k}(U)$; and
\label{thm:tidy1}
\item[\bf TB] $U_{++} := \bigcup_{k\in\mathbb{Z}} \alpha^k(U_+)$ is closed.
\label{thm:tidy2}
\end{description}
\end{definition}

Theorem~3.1 of~\cite{Wi:further} establishes the equivalence of these two concepts.

\begin{theorem}\cite[Theorem~3.1]{Wi:further}
\label{thm:tidy}
The compact open subgroup $U\leq G$ is 
minimizing for $\alpha$ if and only if it is tidy for $\alpha$.
\end{theorem}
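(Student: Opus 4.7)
The strategy is to prove both directions via a \emph{tidying procedure}: an algorithm that, given any compact open subgroup $V \leq G$, produces a tidy subgroup $U$ satisfying $[\alpha(U):\alpha(U)\cap U] \leq [\alpha(V):\alpha(V)\cap V]$, with equality exactly when $V$ was already tidy. The direction \emph{minimizing implies tidy} then follows immediately: if $V$ minimizes but is not tidy, the procedure strictly lowers the index, contradicting minimality. The reverse direction, \emph{tidy implies minimizing}, follows from a direct coset computation showing that for a tidy $U$ the index $[\alpha(U):\alpha(U)\cap U]$ reduces to an invariant of $\alpha$, which by the first direction must equal the minimum.

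The procedure unfolds in two phases. The first phase secures property TA. Form the descending chains of compact open subgroups $V_+^{(n)} := \bigcap_{k=0}^{n} \alpha^{k}(V)$ and $V_-^{(n)} := \bigcap_{k=0}^{n} \alpha^{-k}(V)$. Since each $V_+^{(n)}$ is open in the compact group $V$ and each successive index $[V_+^{(n)}:V_+^{(n+1)}]$ is bounded by $[\alpha(V):\alpha(V)\cap V]$, the chains stabilize after finitely many steps at compact open subgroups $V_+$ and $V_-$. A coset computation then shows that $V' := V_+ V_-$ is a subgroup satisfying TA, with $\alpha(V') \cap V' = V_+ \cdot \alpha(V_-)$, and
\[
[\alpha(V'):\alpha(V')\cap V'] \;=\; [\alpha(V_+):V_+] \;\leq\; [\alpha(V):\alpha(V)\cap V],
\]
with equality precisely when $V$ already satisfied TA.

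The second phase secures TB while preserving TA and the index bound. Consider the ascending union $V_{++} := \bigcup_{k} \alpha^{k}(V_+)$; if it fails to be closed, one identifies an $\alpha$-stable compact subgroup $L \leq \overline{V_{++}}$ whose adjunction yields $U := L V$ satisfying both TA and TB, with index no greater than that of $V$. I expect this phase to be the principal obstacle: the correction $L$ must be chosen carefully so as neither to destroy the factorization $U = U_+ U_-$ nor to introduce new cosets of $\alpha(U) \cap U$ inside $\alpha(U)$. Its existence rests on the observation that failure of TB yields only a bounded discrepancy between $V_{++}$ and its closure, which can be absorbed by a compact $\alpha$-invariant correction inside $\overline{V_{++}}$. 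Once the procedure is in place, the equivalence drops out by tracking equality through each phase: if $V$ is minimizing, the first phase forces $V = V_+ V_-$ and the second forces $V_{++}$ to be already closed, so $V$ is tidy.
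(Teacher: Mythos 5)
First, a point of orientation: the paper does not prove this theorem at all — it is quoted from \cite{Wi:further}, so the only benchmark is Willis's original proof, which indeed proceeds by the two-phase ``tidying procedure'' you outline. Your high-level skeleton therefore matches the cited argument, but Phase~1 contains a genuine error. From the bound $[V_+^{(n)}:V_+^{(n+1)}]\leq[\alpha(V):\alpha(V)\cap V]$ you conclude that the descending chains stabilize; bounded successive indices do not force stabilization (take $G=\mathbb{Q}_p$, $\alpha$ multiplication by $p$, $V=\mathbb{Z}_p$: every step of the chain $\alpha^k(V)=p^k\mathbb{Z}_p$ has index $p$ and it never stabilizes). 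In general $V_+=\bigcap_{k\geq0}\alpha^k(V)$ and $V_-$ are compact but \emph{not open}, the product $V_+V_-$ need not be a subgroup, and even when it is it need not be open, so your subgroup $V'$ and the displayed index comparison do not get off the ground. The correct first step works with the finite intersections $V_n=\bigcap_{k=0}^{n}\alpha^k(V)$: the sequence of indices $[\alpha(V_n):\alpha(V_n)\cap V_n]$ is non-increasing in $n$, hence eventually constant, and one then proves that for such an $n$ the (still open) subgroup $V_n$ satisfies {\bf TA} with index at most that of $V$. Your ``equality precisely when $V$ already satisfied TA'' claims are likewise asserted, not proved, and they are needed for the minimizing-implies-tidy direction.

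Phase~2 is where the real difficulty lies, and your sketch replaces the key construction with a hope. The correcting subgroup is not found inside $\overline{V_{++}}$, and there is no sense in which failure of {\bf TB} produces only a ``bounded discrepancy'' between $V_{++}$ and its closure that can be absorbed. Willis's argument introduces the set $\mathcal{L}=\{w\in G:\alpha^k(w)\in V'\ \text{for all but finitely many }k\in\mathbb{Z}\}$, proves — and this is the hardest point of the whole theorem — that its closure $\mathcal{K}$ is a compact, $\alpha$-stable subgroup, and then forms the tidy subgroup not as $\mathcal{K}V'$ but from the elements of $V'$ whose conjugation action is compatible with $\mathcal{K}$, multiplied by $\mathcal{K}$; only then can one check that the result is a compact open subgroup, that {\bf TA} survives, that {\bf TB} now holds, and that the index has not increased. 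Similarly, your reverse direction leans on the unproved assertion that for tidy $U$ the index $[\alpha(U):\alpha(U)\cap U]$ ``reduces to an invariant of $\alpha$''; the independence of this index from the choice of tidy $U$ is itself a substantive lemma, not a direct coset computation. So: right strategy, but a false stabilization step in Phase~1 and a missing core in Phase~2 mean neither implication is actually established as written.
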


Since the proof of the existence of tidy subgroups 
given in~\cite{Wi:structure} is constructive, 
Theorem~\ref{thm:tidy} enables us to 
construct minimizing subgroups rather than just assert their existence.
Although condition {\bf TB} in Definition~\ref{defn:tidy} is asymmetric, 
the following result restores the symmetry.
For the proof, note that $G$ is a locally compact topological group 
and hence has a left-invariant Haar measure which is unique up to a positive scalar and which will be denoted by $\lambda$, see~\cite{Hewitt&Ross}. Then, for each automorphism,~$\alpha$, of~$G$, $\lambda\circ\alpha$ is a left-invariant measure and is therefore equal to $\Delta(\alpha)\lambda$ for some positive real number $\Delta(\alpha)$ called the \emph{module} of $\alpha$. 

\begin{lemma}
\label{lem:tidy for inverse}
$U$ is tidy for $\alpha$ if and only if it is tidy for $\alpha^{-1}$.
\end{lemma}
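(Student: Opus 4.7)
The plan is to use Theorem~\ref{thm:tidy} to convert the statement about tidiness into a statement about the (a priori) asymmetric minimizing condition, and then to show that minimizing is in fact symmetric in $\alpha$ and $\alpha^{-1}$. This sidesteps the need to analyze the two tidy conditions \textbf{TA} and \textbf{TB} separately, which would be awkward because \textbf{TB} is visibly asymmetric.

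First, I would observe that for any compact open subgroup $V\leq G$, the two indices
\[
|\alpha(V):\alpha(V)\cap V|\quad\text{and}\quad|V:V\cap \alpha(V)|
\]
are both finite (by the continuity of $\alpha$ and compactness/openness of $V$), and may be computed as ratios of Haar measures. Using the module $\Delta(\alpha)$ introduced just before the lemma, $\lambda(\alpha(V))=\Delta(\alpha)\lambda(V)$, and hence
\[
|\alpha(V):\alpha(V)\cap V|=\frac{\lambda(\alpha(V))}{\lambda(\alpha(V)\cap V)}=\Delta(\alpha)\cdot\frac{\lambda(V)}{\lambda(V\cap \alpha(V))}=\Delta(\alpha)\,|V:V\cap \alpha(V)|.
\]
Since $\Delta(\alpha)$ depends only on $\alpha$ and not on $V$, the functionals $V\mapsto|\alpha(V):\alpha(V)\cap V|$ and $V\mapsto|V:V\cap\alpha(V)|$ attain their minima on the same compact open subgroups.

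Next, applying $\alpha^{-1}$ to the second index gives $|V:V\cap\alpha(V)|=|\alpha^{-1}(V):\alpha^{-1}(V)\cap V|$, which is precisely the index whose minimum defines the scale of $\alpha^{-1}$. Combining these two identities, a compact open subgroup $V$ minimizes $|\alpha(V):\alpha(V)\cap V|$ if and only if it minimizes $|\alpha^{-1}(V):\alpha^{-1}(V)\cap V|$; that is, $V$ is minimizing for $\alpha$ if and only if $V$ is minimizing for $\alpha^{-1}$. Invoking Theorem~\ref{thm:tidy} at both ends, $V$ is tidy for $\alpha$ if and only if it is tidy for $\alpha^{-1}$, which is the desired conclusion.

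The only non-routine step is the Haar-module computation, and even this is standard once one has the identity $\lambda\circ\alpha=\Delta(\alpha)\lambda$ recalled in the paragraph preceding the lemma. I do not foresee any real obstacle; the main point is simply to recognise that the asymmetry of condition \textbf{TB} should not be attacked head-on, and that Theorem~\ref{thm:tidy} together with the Haar-module computation replaces the tidy conditions with a manifestly more symmetric characterisation.
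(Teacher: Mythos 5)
Your proof is correct and follows essentially the same route as the paper: both compute the module $\Delta(\alpha)$ as the ratio $|\alpha(U):\alpha(U)\cap U|\,/\,|\alpha^{-1}(U):\alpha^{-1}(U)\cap U|$ via Haar measure, use the independence of $\Delta(\alpha)$ from the subgroup to conclude the two indices are minimised by the same subgroups, and invoke Theorem~\ref{thm:tidy} to translate between minimizing and tidy.
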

\begin{proof}
By definition, $\Delta(\alpha) = \frac{\lambda(\alpha(U))}{\lambda(U)}$. Hence
\begin{eqnarray*}
\Delta(\alpha) & 
= \displaystyle{\frac{\lambda(\alpha(U))\lambda(U \cap \alpha(U))}{\lambda(U)\lambda(U \cap \alpha(U))}}
=\displaystyle{\frac{\lambda(\alpha(U))}{\lambda(U \cap \alpha(U))}
\frac{\lambda(U \cap \alpha(U))}{\lambda(U)}} \\
& = \displaystyle{\frac{|\alpha(U) : \alpha(U)\cap U|}{|U : U \cap \alpha(U)|}
= \frac{|\alpha(U) : \alpha(U)\cap U|}{|\alpha^{-1}(U) : \alpha^{-1}(U)\cap U|}}.
\end{eqnarray*}
Since $\Delta(\alpha)$ is independent of $U$, 
$|\alpha(U) : \alpha(U)\cap U|$ is minimised 
if and only if $|\alpha^{-1}(U) : \alpha^{-1}(U)\cap U|$ is minimised, 
and the result follows.
\end{proof}

As a direct result of the proof of Lemma~\ref{lem:tidy for inverse}, we observe the following.

\begin{corollary}
For every automorphism $\alpha$ of $G$, $\Delta(\alpha)=\frac{s(\alpha)}{s(\alpha^{-1})}$.
\end{corollary}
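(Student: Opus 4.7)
The plan is to extract the required identity directly from the computation already performed in the proof of Lemma~\ref{lem:tidy for inverse}, by choosing the auxiliary subgroup $U$ wisely. Recall that in that proof, starting from the definition $\Delta(\alpha) = \lambda(\alpha(U))/\lambda(U)$ and inserting the factor $\lambda(U\cap\alpha(U))/\lambda(U\cap\alpha(U))$, we established, for \emph{every} compact open subgroup $U\leq G$, the identity
\[
\Delta(\alpha) \;=\; \frac{|\alpha(U):\alpha(U)\cap U|}{|\alpha^{-1}(U):\alpha^{-1}(U)\cap U|}.
\]
The left-hand side does not depend on $U$, so we are free to specialise $U$ to anything convenient.

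The natural choice is to take $U$ to be a compact open subgroup that is simultaneously minimizing for $\alpha$ and for $\alpha^{-1}$. Such a $U$ exists: by Theorem~\ref{thm:tidy}, minimizing and tidy coincide, and by Lemma~\ref{lem:tidy for inverse}, any subgroup tidy for $\alpha$ is automatically tidy for $\alpha^{-1}$. Hence taking any $U$ tidy for $\alpha$ (whose existence is guaranteed by the construction in~\cite{Wi:structure}) yields $|\alpha(U):\alpha(U)\cap U| = s(\alpha)$ for the numerator and $|\alpha^{-1}(U):\alpha^{-1}(U)\cap U| = s(\alpha^{-1})$ for the denominator.

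Substituting these two minimal indices into the displayed identity gives $\Delta(\alpha) = s(\alpha)/s(\alpha^{-1})$, as required. There is essentially no obstacle here beyond recognising that the chain of equalities in the proof of Lemma~\ref{lem:tidy for inverse} was proved for an arbitrary compact open $U$, so the only real content is the observation that tidiness for $\alpha$ and $\alpha^{-1}$ can be realised simultaneously, which is exactly what Lemma~\ref{lem:tidy for inverse} together with Theorem~\ref{thm:tidy} provides.
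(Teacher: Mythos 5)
Your proof is correct and follows the same route the paper intends: the corollary is stated as a direct consequence of the identity $\Delta(\alpha)=|\alpha(U):\alpha(U)\cap U|/|\alpha^{-1}(U):\alpha^{-1}(U)\cap U|$ established in the proof of Lemma~\ref{lem:tidy for inverse}, specialised to a subgroup $U$ tidy (hence minimizing) for both $\alpha$ and $\alpha^{-1}$. Nothing is missing.
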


Lemma~2.6 in~\cite{Wi:SimulTriang} shows 
that if $U$ is tidy for $\alpha$ then $U\cap \alpha^n(U) = \bigcap_{k=0}^n \alpha^k(U)$, for all $n\geq1$, 
and it then follows immediately from the definition of tidiness that~$U$ is tidy for $\alpha^n$ and hence for the semigroup generated by~$\alpha$.
We are interested in situations in which $U$ is tidy for groups of automorphisms that are not singly generated.

\begin{definition}
The subgroup $\mathcal{H}$ of $\Aut(G)$ is \emph{flat} if 
there is a compact, open $U\leq G$ that is tidy for every $\alpha\in\mathcal{H}$. 
\end{definition}

Considering inner automorphisms transfers the notions of tidy subgroups and flatness from automorphisms of~$G$ to elements of~$G$. 
In the sequel $s(x)$ will denote the scale of the inner automorphism $\alpha_x : y\mapsto xyx^{-1}$ and $U$ will be said to be tidy for $x$ if it is tidy for~$\alpha_x$.

\begin{definition}
\label{defn:multiplicative over V}
Suppose $V$ is a compact open subgroup of $G$.
A semigroup $S\subseteq G$, (that is to say, a sub-semigroup of $G$), is 
\emph{multiplicative over $V$} 
if 
$V\subseteq S$ and the map 
$m:S\to \ZZ^+$ given by $m(x)= |xVx^{-1}\colon xVx^{-1}\cap V|$ 
satisfies $m(xy)=m(x)m(y)$ for all~$x,y\in S$.
\end{definition}
It is shown in~\cite[Proposition~2.2]{BRW:trees} that, 
if $S$ is multiplicative over $V$, 
then $m(x)$ is equal to the scale of $x$ for every $x\in S$. 
In particular, $V$ is tidy for~$S$ and the scale is multiplicative on~$S$.
The following appears as~\cite[Definition 2.6]{BRW:trees}.

\begin{definition}
\label{defn:s-multiplicative}
A semigroup $S\subseteq G$ is 
\emph{$s$-multiplicative} 
if $s(xy)=s(x)s(y)$ for all $x,y\in S$.
Equivalently, the restriction of the scale function to ~$S$
is a semigroup homomorphism from~$S$ to~$(\ZZ^+,\times)$.
\end{definition}

We end this subsection with a standard result that will be familiar to people from several areas of mathematics. As detailed in Corollary~\ref{for:cosets and scale}, it links the scale of $x^{-1}\in G$ to the number of right $U$-cosets in the double coset $UxU$ whenever $U$ is tidy for~$x$. We denote the set of right $U$-cosets in $G$ by $U\backslash G$. 

\begin{lemma}
\label{lem:double_coset}
Let $U$ be a compact, open subgroup of $G$ and $x\in G$. 
Then 
\begin{enumerate}[(i)]
\item 
\label{lem:double_coset1}
for $u\in U$, $Uxu=Ux$ if and only if $u\in U\cap x^{-1}Ux$, 
\item 
\label{lem:double_coset3}
the number of right $U$-cosets in $UxU$ is equal to $|U : U\cap x^{-1}Ux|$, and 
\item 
\label{lem:double_coset2}
the index of $U\cap yUy^{-1}$ in $U$ is independent of $y\in UxU$.
\end{enumerate}
In particular there is a finite number of right  $U$-cosets in $UxU$.
\end{lemma}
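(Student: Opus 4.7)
The plan is to dispatch the three parts in order: (i) is an elementary coset computation, (ii) parametrises right $U$-cosets in $UxU$ using (i), (iii) is a conjugation calculation showing the claimed index depends only on $x$ and not on the chosen representative, and the final finiteness assertion comes from a topological observation about $U \cap x^{-1}Ux$.

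For (i), I would start with the chain of equivalences $Uxu = Ux \iff xu \in Ux \iff xux^{-1} \in U \iff u \in x^{-1}Ux$, and then intersect with the standing hypothesis $u\in U$. For (ii), every element of $UxU$ has the form $u_1xu_2$ with $u_i\in U$, and since $Uu_1 = U$, the right $U$-cosets in $UxU$ are precisely $\{Uxu : u \in U\}$. Applying (i) to $u'u^{-1}$ shows that $Uxu = Uxu'$ if and only if $u'u^{-1} \in U \cap x^{-1}Ux$, so the assignment $u\mapsto Uxu$ factors through a bijection between the right coset space $(U\cap x^{-1}Ux)\backslash U$ and the set of right $U$-cosets of $UxU$. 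This gives the index $|U : U \cap x^{-1}Ux|$.

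For (iii), write $y = u_1xu_2$ with $u_1,u_2 \in U$. Since $u_2Uu_2^{-1}=U$, one computes $yUy^{-1} = u_1(xUx^{-1})u_1^{-1}$, and intersecting with $U = u_1Uu_1^{-1}$ yields
\[
U \cap yUy^{-1} = u_1\bigl(U \cap xUx^{-1}\bigr)u_1^{-1}.
\]
Conjugation by $u_1$ is a homeomorphic automorphism of $U$ (because $u_1 \in U$), hence it preserves indices, giving $|U : U \cap yUy^{-1}| = |U : U \cap xUx^{-1}|$, which manifestly depends only on $x$ and not on the choice of $y \in UxU$.

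For the final assertion, $x^{-1}Ux$ is open in $G$ because conjugation by $x$ is a homeomorphism and $U$ is open; therefore $U \cap x^{-1}Ux$ is an open subgroup of the compact group $U$, hence of finite index, so by (ii) the number of right $U$-cosets in $UxU$ is finite. I do not anticipate any genuine obstacle here; these are all routine coset manipulations, and the only point that requires a little care is to keep the left/right conventions for conjugation straight so that one lands on $|U:U\cap x^{-1}Ux|$ in (ii) versus $|U:U\cap xUx^{-1}|$ in (iii)—both indices are of course finite, but they are a priori different numbers, and the lemma only asserts invariance of the latter across $UxU$.
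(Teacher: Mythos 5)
Your proof is correct, and for parts (i) and (ii) it is essentially the paper's argument: the paper phrases (ii) as an orbit--stabiliser statement for the transitive right $U$-action on the cosets $Uxu$, with stabiliser $U\cap x^{-1}Ux$ identified via (i), which is exactly your parametrisation by $(U\cap x^{-1}Ux)\backslash U$. Where you genuinely diverge is (iii): the paper deduces it in one line from (ii) and the fact that $UyU=UxU$ for $y\in UxU$, which literally yields invariance of $|U:U\cap y^{-1}Uy|$ and leaves implicit the passage to the stated quantity $|U:U\cap yUy^{-1}|$ (equivalently, one applies (ii) to the inverse double coset $Ux^{-1}U$, or counts left cosets instead of right ones); your direct computation $U\cap yUy^{-1}=u_1\bigl(U\cap xUx^{-1}\bigr)u_1^{-1}$ for $y=u_1xu_2$ hits the stated index on the nose and makes the invariance transparent, at the cost of not re-using (ii). You also spell out the finiteness claim (openness of $U\cap x^{-1}Ux$ in the compact group $U$), which the paper asserts without proof, and your closing remark distinguishing $|U:U\cap x^{-1}Ux|$ from $|U:U\cap xUx^{-1}|$ is precisely the left/right subtlety the paper's terse proof of (iii) glosses over.
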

\begin{proof}
For part~\eqref{lem:double_coset1} we have  
$
Uxu = Ux  
\Leftrightarrow 
x^{-1}Uxu = x^{-1}Ux 
\Leftrightarrow
u\in x^{-1}Ux. 
$
Thus $Uxu = Ux$ and  
$
u\in U  
\Leftrightarrow  
u \in U\cap x^{-1}Ux. 
$

For part~\eqref{lem:double_coset3}, consider the transitive action of $U$ on the set of right $U$-cosets in $UxU$ 
given by $u: Uxu'\rightarrow Uxu'u$.  
By part~\eqref{lem:double_coset1}, the stabiliser of $Ux$ in this action is $U\cap x^{-1}Ux$ and hence 
the number of right $U$-cosets in $UxU$ is 
equal to $|U : U\cap x^{-1}Ux|$.

Part~\eqref{lem:double_coset2} follows immediately from part~\eqref{lem:double_coset3}
since $UyU=UxU$ for each $y\in UxU$. 
\end{proof}

\begin{corollary}
\label{for:cosets and scale}
Let $U$ be a compact, open subgroup of $G$ and $x\in G$. 
If $U$ is tidy for $x$ then the number of right $U$-cosets in $UxU$ is equal to $s(x)$.
\end{corollary}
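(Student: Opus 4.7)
The plan is to combine Lemma~\ref{lem:double_coset} with the alternative description of the scale given immediately after the definition of minimizing subgroups. By Lemma~\ref{lem:double_coset}\eqref{lem:double_coset3}, the number of right $U$-cosets in $UxU$ equals $|U : U \cap x^{-1}Ux|$, so the task reduces to identifying this index with $s(x)$.

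For this, recall that $s(x)$ is by convention the scale of the inner automorphism $\alpha_x : y \mapsto xyx^{-1}$, and that $U$ being tidy for $x$ means $U$ is tidy for $\alpha_x$. By Theorem~\ref{thm:tidy}, tidiness is equivalent to being minimizing, so $U$ minimizes $\alpha_x$. The remark following the definition of the scale then gives
\[
s(\alpha_x) = |U : U \cap \alpha_x^{-1}(U)|.
\]
Since $\alpha_x^{-1}(U) = x^{-1}Ux$, this reads $s(x) = |U : U \cap x^{-1}Ux|$, matching the expression from the lemma and completing the proof.

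There is no real obstacle here; the proof is essentially bookkeeping, and the only points requiring care are (i) translating between the inner-automorphism definition of $s(x)$ and the explicit conjugation $x^{-1}Ux$, and (ii) invoking Theorem~\ref{thm:tidy} to pass from ``tidy'' to ``minimizing'' so that the index formula for the scale is applicable. Both are immediate from the material already developed in subsection~\ref{sec:Willis thy}.
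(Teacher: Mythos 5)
Your proposal is correct and follows essentially the same route as the paper: both identify the number of right $U$-cosets in $UxU$ with $|U : U\cap x^{-1}Ux|$ via Lemma~\ref{lem:double_coset} and then equate this index with $s(x)$ using tidiness (equivalently, by Theorem~\ref{thm:tidy}, the minimizing property). The paper phrases the second step as the conjugation identity $s(x)=|xUx^{-1}:xUx^{-1}\cap U|=|U:U\cap x^{-1}Ux|$, which is the same bookkeeping you carry out via the remark $s(\alpha)=|V:V\cap\alpha^{-1}(V)|$.
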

\begin{proof}
Note that $s(x)= |xUx^{-1}\colon xUx^{-1}\cap U| = |U : U\cap x^{-1}Ux|$, 
and apply Lemma~\ref{lem:double_coset}.
\end{proof}

\subsection{M\"oller's characterization of tidiness for a single element}
\label{sec:Roggi}

In~\cite{struc(tdlcG-graphs+permutations)}, 
M\"oller characterised tidiness of subgroups  for a given element of $G$ 
in terms of combinatorial properties of an associated graph.
His construction characterises when a compact open subgroup $U\subset G$ is tidy for an element $x\in G$ as follows.
Let $\Omega = U\backslash G$ be the space of right cosets of $U$ in $G$. 
Denote by $\nu_0 = U\in \Omega$ the trivial coset 
and let $\nu_i = \nu_0x^i=Ux^i$ for $i\in \NN$. 
Then $\nu_i$ can variously be thought of as 
the right coset corresponding to~$x^i$ or 
the image of $\nu_0$ under right multiplication by $x^i$. 
We construct a graph from the orbits of the $\nu_i$ under right multiplication by~$U$. 
Concrete instances of this construction were given in Examples~\ref{ex:tree} and\ref{ex:Cartdec}.  
 
\begin{definition}
The graph $\Gamma_+$ consists of 
vertex set $V\Gamma_+$ and edge set $E\Gamma_+$ defined by
$$
V(\Gamma_+) = \bigcup_{i\geq0} \nu_iU 
\quad\hbox{ and }\quad
E(\Gamma_+) = \bigcup_{i\geq0} (\nu_i,\nu_{i+1})U,
$$
where $(\nu_i,\nu_{i+1})U = \left\{ (\nu_iu,\nu_{i+1}u) \mid u\in U\right\}$. 
\end{definition}

M\"oller characterised tidiness of $U$ for $x$ 
in terms of the structure of~$\Gamma_+$.

\begin{theorem}\cite[Theorem 3.4]{struc(tdlcG-graphs+permutations)}
$U$ is tidy for $x$ if and only if $\Gamma_+$ is 
a directed regular rooted tree with all edges directed away from $\nu_0$.
\end{theorem}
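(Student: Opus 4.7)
The plan is to translate the combinatorial conditions ``regular'', ``rooted'', and ``tree'' on $\Gamma_+$ into algebraic statements about the descending chain of subgroups $V_i := U \cap x^{-i}Ux^i$, and to match those against the definition of tidiness via the identity $V_i = W_i := \bigcap_{j=0}^{i} x^{-j}Ux^j$.

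First I would carry out the orbit--stabiliser bookkeeping. By Lemma~\ref{lem:double_coset} the level set $\nu_i^U$ is in $U$-equivariant bijection with the coset space $U/V_i$ and so has cardinality $|U:V_i|$. A parallel count on oriented edges shows that the out-valency at $\nu_i$ equals $|V_i : V_i \cap x^{-(i+1)}Ux^{i+1}|$ and, for $i \geq 1$, the in-valency at $\nu_i$ equals $|V_i : V_i \cap x^{-(i-1)}Ux^{i-1}|$. The pivotal observation is that in-valency~$1$ at $\nu_i$ is equivalent to $V_i \subseteq x^{-(i-1)}Ux^{i-1}$, which combined with an inductive hypothesis $V_{i-1} = W_{i-1}$ forces $V_i \subseteq V_{i-1}$ and hence $V_i = W_i$. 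Thus in-valency~$1$ at every non-root vertex is equivalent, by induction on $i$, to $V_i = W_i$ for every $i \geq 0$, and under this condition the out-valency at $\nu_i$ collapses to $|V_i : V_{i+1}|$.

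For the forward direction, assume $U$ is tidy for $x$. Lemma~2.6 of~\cite{Wi:SimulTriang}, together with the symmetry furnished by Lemma~\ref{lem:tidy for inverse}, yields $V_i = W_i$ for all $i$, so every non-root vertex has in-valency~$1$; since all edges go from level $i$ to level $i+1$ and $\nu_i^U$ is reachable from $\nu_0$ along the $U$-translates of $(\nu_0,\nu_1,\ldots,\nu_i)$, the graph is a directed tree rooted at $\nu_0$. For constant out-valency, I would invoke the tidy decomposition $U = U_+U_-$ together with the monotonicities $x^{-1}U_+x \subseteq U_+$ and $xU_-x^{-1} \subseteq U_-$ to exhibit a canonical isomorphism $V_i/V_{i+1} \cong xU_+x^{-1}/U_+$ at every level, whose common size is the scale $s(x)$.

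Conversely, assume $\Gamma_+$ is a regular rooted tree of out-valency $d$ directed away from $\nu_0$. The in-valency condition forces $V_i = W_i$ for all $i$, and constant out-valency gives $|V_i:V_{i+1}| = d$ and thus $|U:V_n| = d^n$ by induction. Combined with the standard identities $s(x^n) = s(x)^n$ and the scale-limit formula $s(x) = \lim_n |U : U \cap x^{-n}Ux^n|^{1/n}$, this forces $d = s(x)$, so $|xUx^{-1}:xUx^{-1}\cap U| = |U:V_1| = s(x)$, i.e., $U$ is minimising for $x$, and Theorem~\ref{thm:tidy} yields tidiness. The main obstacle throughout is the identification of the common out-valency with $s(x)$: in the forward direction this encodes the quantitative content of tidiness through the delicate interplay of $U_+$ and $U_-$, while in the converse it rests on the scale-limit formula, which is where the constructive existence of tidy subgroups furnished by~\cite{Wi:structure} enters the argument.
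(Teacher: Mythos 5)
A preliminary remark on the comparison: the paper does not prove this statement at all --- it is quoted from M\"oller \cite{struc(tdlcG-graphs+permutations)} as background --- so your proposal can only be measured against M\"oller's argument and against the machinery this paper develops for its generalisation (Theorem~\ref{thm:P-graph G}). Your skeleton is the right one and matches that machinery: encode the valencies of $\Gamma_+$ in the subgroups $V_i=U\cap x^{-i}Ux^i$ and $W_i=\bigcap_{j=0}^i x^{-j}Ux^j$, obtain $V_i=W_i$ from tidiness via \cite[Lemma~2.6]{Wi:SimulTriang} together with Lemma~\ref{lem:tidy for inverse}, and in the converse read minimality of $U$ off the valency data.

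There are, however, two genuine gaps. (1) All of your valency bookkeeping tacitly assumes that the level sets $\nu_iU$, $i\geq0$, are pairwise disjoint; without this, a vertex can receive edges from a non-adjacent level, and ``in-valency one between consecutive levels'' does not exclude cycles. This is not automatic: if $x$ normalises $U$ and some positive power of $x$ lies in $U$ (for instance $x\in U$), then $U$ is tidy but $\Gamma_+$ is a finite directed cycle, so you must either prove disjointness of the double cosets $Ux^iU$ or dispose of the degenerate uniscalar case explicitly. When $U$ is tidy and $s(x)>1$, disjointness does follow, because $Ux^iU$ contains exactly $s(x^i)=s(x)^i$ right $U$-cosets by Corollary~\ref{for:cosets and scale}; note that the paper has to prove precisely this point for its generalisation (Lemma~\ref{lem:double cosets}), and there it needs discreteness of $H$ to rule out the degeneracy. (2) Your converse rests on the limit formula $s(x)=\lim_n|U:U\cap x^{-n}Ux^n|^{1/n}$, which is itself a theorem of the very paper of M\"oller in question, proved there with closely related methods, so quoting it invites circularity. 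It is easily avoided: choose any $V$ tidy for $x$ (existence from \cite{Wi:structure}); then the elementary estimate $|U:U\cap x^{-n}Ux^n|\leq |U:U\cap V|\,|V:U\cap V|\,|V:V\cap x^{-n}Vx^n| = C\,s(x)^n$ with $C$ independent of $n$, combined with $|U:U\cap x^{-n}Ux^n|\geq s(x^n)=s(x)^n$ from minimality of the scale, squeezes $d^n=|U:V_n|$ between $s(x)^n$ and $C\,s(x)^n$, giving $d=s(x)$; hence $|U:U\cap x^{-1}Ux|=s(x)$, $U$ is minimizing, and Theorem~\ref{thm:tidy} yields tidiness. (In the forward direction you can likewise dispense with the $U_+$, $U_-$ bookkeeping: tidiness for all powers gives $|U:V_i|=s(x^i)=s(x)^i$, whence $|V_i:V_{i+1}|=s(x)$ directly.)
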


From~\cite[Corollary 3]{Wi:structure}, 
 if $U$ is tidy for $x$ 
then it is tidy for $x^n$ 
and $s(x^n)=s(x)^n$ for~$n\in \NN$. 
Thus M\"oller's result can be thought of as 
a graphical characterisation 
of the tidiness of $U$ 
for the flat subgroup $\langle x\rangle \leq G$
and the multiplicative semigroup $\langle x\rangle_+\subseteq G$. Note that $\Gamma_+$ is defined in terms of $\langle x\rangle_+$ and that its out-valency is equal to the scale of $x$ when $U$ is tidy. 
Our aim in this paper is 
to generalize this characterisation 
from $\langle x\rangle_+\subseteq G$ 
to an arbitrary multiplicative semigroup of elements $S\subseteq G$. 
To do this, we draw on some results on flat subgroups of~$G$.

\section{Flat groups and their subsemigroups}
\label{sec:semigroups in flat groups}

\subsection{Flat groups}

The following result says that if $H$ is 
a finitely generated, abelian group of automorphisms of~$G$ then 
there is a compact, open subgroup $U$ of $G$ that is 
simultaneously tidy for every element of~$H$. 

\begin{theorem}\cite{Wi:SimulTriang}
Every finitely generated, abelian group of automorphisms is flat.
\end{theorem}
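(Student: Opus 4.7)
The plan is to argue by induction on the number of generators of $H$. Write $H=\langle \alpha_1,\ldots,\alpha_n\rangle$ with the $\alpha_i$ pairwise commuting.

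For the base case $n=1$, the existence theorem of~\cite{Wi:structure} produces a compact open subgroup $U$ tidy for $\alpha_1$. By Lemma~\ref{lem:tidy for inverse}, $U$ is tidy for $\alpha_1^{-1}$, and by \cite[Lemma~2.6]{Wi:SimulTriang} (stated in the remark preceding Definition~\ref{defn:multiplicative over V}), $U$ is tidy for every positive power of $\alpha_1$. Combining these, $U$ is tidy for every element of $\langle\alpha_1\rangle$, so the singly generated case is settled.

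For the inductive step, let $H'=\langle\alpha_1,\ldots,\alpha_{n-1}\rangle$ and use the inductive hypothesis to choose a compact open $V$ that is tidy for every element of $H'$. I would then aim to modify $V$ to a compact open subgroup $U$ that remains tidy for $H'$ while also becoming tidy for $\alpha_n$; once this is done, the base-case argument applied to $U$ and $\alpha_n$ extends tidiness to every power of $\alpha_n$, hence to every element of $H$. The pivotal observation is that commutativity lets the tidying procedure for $\alpha_n$ be executed without disturbing tidiness for $H'$. Indeed, since $\alpha_i\alpha_n=\alpha_n\alpha_i$ for each $i<n$, one computes for every $k\in\mathbb{Z}$
\[
|\alpha_i(\alpha_n^k(V)):\alpha_i(\alpha_n^k(V))\cap\alpha_n^k(V)|
=|\alpha_n^k(\alpha_i(V)):\alpha_n^k(\alpha_i(V)\cap V)|
=|\alpha_i(V):\alpha_i(V)\cap V|=s(\alpha_i),
\]
so each translate $\alpha_n^k(V)$ is minimizing, and hence tidy by Theorem~\ref{thm:tidy}, for every element of $H'$. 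The tidying procedure of~\cite{Wi:structure} for $\alpha_n$, applied with starting subgroup $V$, constructs the desired $U$ using only finitely many intersections and unions drawn from the family $\{\alpha_n^k(V)\}_{k\in\mathbb{Z}}$, and one would follow this procedure step by step, verifying at each step that tidiness for each $\alpha_i\in H'$ persists.

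The main obstacle is precisely this step-by-step verification. Tidiness is not in general preserved under intersection of compact open subgroups, so it is not automatic that, for instance, the subgroup $\bigcap_{k=0}^N \alpha_n^k(V)$ used to achieve condition \textbf{TA} for $\alpha_n$, or the union $\bigcup_{k\in\mathbb{Z}}\alpha_n^k(V_+)$ used in condition \textbf{TB}, is tidy for $H'$. The way through is to use commutativity to show that each $\alpha_i$ permutes the family $\{\alpha_n^k(V)\}_{k\in\mathbb{Z}}$, so that applying $\alpha_i$ commutes (in the set-theoretic sense) with the intersection and union operations appearing in the procedure, preserving the scale-indices computed above. This is essentially the case of two commuting automorphisms treated in~\cite{Wi:SimulTriang}, and iterating it along the chain $\langle\alpha_1\rangle\subseteq\langle\alpha_1,\alpha_2\rangle\subseteq\cdots\subseteq H$ completes the induction and yields the theorem.
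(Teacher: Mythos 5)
There is a genuine gap, and it sits exactly where you place your ``main obstacle.'' First, note that the paper itself offers no proof of this statement: it is quoted from \cite{Wi:SimulTriang}, where it is the main theorem of a long paper, so your sketch has to stand on its own. Your base case is fine, and your index computation is correct as far as it goes: since $\alpha_i\alpha_n=\alpha_n\alpha_i$, one has $\alpha_i(\alpha_n^k(V))\cap\alpha_n^k(V)=\alpha_n^k(\alpha_i(V)\cap V)$, and applying the automorphism $\alpha_n^{-k}$ preserves indices, so every translate $\alpha_n^k(V)$ is minimizing, hence tidy, for every element of $H'$. But the mechanism you then invoke to get past the obstacle is false: $\alpha_i$ does \emph{not} permute the family $\{\alpha_n^k(V)\}_{k\in\mathbb{Z}}$. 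Indeed $\alpha_i(\alpha_n^k(V))=\alpha_n^k(\alpha_i(V))$, and this belongs to the family only if $\alpha_i(V)$ is itself an $\alpha_n$-translate of $V$, which there is no reason to expect (generically $\alpha_i(V)$ is just some other compact open subgroup). So the claim that applying $\alpha_i$ ``commutes with the intersection and union operations appearing in the procedure'' in a way that preserves tidiness does not follow, and the step-by-step verification you defer is never supplied.

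Moreover, the description of the tidying procedure as ``finitely many intersections and unions drawn from the family $\{\alpha_n^k(V)\}_{k\in\mathbb{Z}}$'' is not accurate. Achieving \textbf{TA} for $\alpha_n$ does use a finite intersection $\bigcap_{k=0}^N\alpha_n^k(V)$, but achieving \textbf{TB} requires forming an auxiliary subgroup out of elements whose $\langle\alpha_n\rangle$-orbits are bounded (the subgroup denoted $\mathcal{L}$, resp.\ $K$, in \cite{Wi:structure} and \cite{Wi:SimulTriang}) and combining it with the \textbf{TA}-subgroup; it is precisely the proof that these steps can be carried out so as to preserve tidiness for the commuting automorphisms $\alpha_1,\dots,\alpha_{n-1}$ that occupies the bulk of \cite{Wi:SimulTriang}, and it is not a formal consequence of commutativity. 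Tidiness is genuinely not stable under intersections (as you acknowledge), so the inductive step needs the detailed machinery of that paper rather than the set-theoretic commutation argument you propose. In short: your outline reproduces the correct overall strategy (induction on generators, handling one additional commuting automorphism at a time), but the decisive step is asserted rather than proved, and the one concrete justification offered for it is incorrect.
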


In particular, every finitely generated, abelian subgroup of $G$ is flat.
This theorem has since been strengthened to say that 
every finitely generated nilpotent group of automorphisms is flat, and 
every polycyclic group of automorphisms is virtually flat 
in the sense that it has a finite index subgroup that is flat, 
see~\cite[Theorems~4.9 \& 4.13]{ShWi}.

\begin{theorem}\cite[Theorem 4.15]{Wi:SimulTriang}
\label{N normal}
Let $H$ be a finitely generated, flat subgroup of $G$ and 
$U$ be a compact open subgroup of $G$ that is 
simultaneously tidy for all elements of $H$. 
The set $H(1) := \{x \in H : xUx^{-1} = U\}$ is 
a normal subgroup of $H$ and $H/H(1)$ is an abelian,
torsion-free group, no element of which is infinitely divisible. 
Hence 
\begin{equation}
\label{eq:flat}
H/H(1) \cong \ZZ^n. 
\end{equation}
\end{theorem}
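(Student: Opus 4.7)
My plan is to derive all the asserted properties of $H(1)$ from the conjugacy-invariance of the scale function, its multiplicativity on powers, and the simultaneous tidiness of $U$ for $H$. The useful preliminary is the characterisation
\[
H(1) = \{x \in H : s(x) = 1 = s(x^{-1})\}.
\]
This follows from the tidiness of $U$ for every $x \in H$: we have $s(x) = |xUx^{-1}:xUx^{-1}\cap U|$ and $s(x^{-1}) = |U:U\cap xUx^{-1}|$, so the joint condition $s(x) = s(x^{-1}) = 1$ is equivalent to $xUx^{-1} = U$.

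From this characterisation, the subgroup and normality properties are immediate. Closure under products and inverses is clear from $xUx^{-1} = U$, so $H(1)$ is a subgroup. For normality, if $x \in H(1)$ and $h \in H$, then conjugacy-invariance of the scale gives $s(hxh^{-1}) = s(x) = 1$ and $s(hx^{-1}h^{-1}) = s(x^{-1}) = 1$, so $hxh^{-1} \in H(1)$.

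The main obstacle is showing $H/H(1)$ is abelian, because the scale itself is not a group homomorphism. My plan is to show that for any $x, y \in H$ the commutator $[x,y]$ lies in $H(1)$. The crucial input is that $U$ is tidy for every element of $H$, and in particular for the product $x^iy^j$ for all $i,j\in\ZZ$. Setting $U_\pm(x) = \bigcap_{k\geq 0} x^{\pm k} U x^{\mp k}$ and analogously for~$y$, simultaneous tidiness forces a joint decomposition of $U$ on which the conjugation actions of $x$ and $y$ commute up to $H(1)$. By Corollary~\ref{for:cosets and scale}, $s([x,y])$ equals the number of right $U$-cosets in $U[x,y]U$, and the joint tidy decomposition can be used to show this number is~$1$; the symmetric argument gives $s([x,y]^{-1}) = 1$, so $[x,y] \in H(1)$. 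This is where the full strength of flatness, beyond tidiness for a single element, is used.

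The remaining properties fall out from multiplicativity of the scale on powers: if $U$ is tidy for $x$ then $s(x^n) = s(x)^n$ for all $n \geq 0$. If $x^n \in H(1)$ with $n\geq1$, then $1 = s(x^n) = s(x)^n$ and $1 = s(x^{-n}) = s(x^{-1})^n$, forcing $s(x) = s(x^{-1}) = 1$ and so $x \in H(1)$; this gives torsion-freeness. For non-divisibility, suppose a coset $xH(1) \neq H(1)$ were divisible by every $n \geq 1$, so that one could find $y_n \in H$ with $y_n^n H(1) = xH(1)$; taking scales gives $s(x) = s(y_n^n) = s(y_n)^n$ for all $n$, and since $s(x) \in \ZZ^+$ this forces $s(x) = 1$, and analogously $s(x^{-1}) = 1$, contradicting $x \notin H(1)$. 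Finally, since $H$ is finitely generated, $H/H(1)$ is a finitely generated torsion-free abelian group, and hence $H/H(1) \cong \ZZ^n$ for some $n \geq 0$ by the structure theorem for such groups.
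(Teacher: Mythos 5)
The statement is imported verbatim from \cite[Theorem 4.15]{Wi:SimulTriang}; the paper gives no proof of it, so your attempt amounts to reproving Willis's theorem. Much of your argument is sound: the characterisation $H(1)=\{x\in H : s(x)=s(x^{-1})=1\}$ is valid because $U$ is minimizing for each $x\in H$ and for $x^{-1}$; normality via conjugation-invariance of the scale is correct; torsion-freeness via $s(x^n)=s(x)^n$ is correct; and the final appeal to the structure theorem for finitely generated torsion-free abelian groups is fine. (A small unstated step: in the divisibility argument you use $s(y_n^n h)=s(y_n^n)$ for $h\in H(1)$; this holds because $h$ normalises $U$, so $(y_n^n h)U(y_n^n h)^{-1}=y_n^n U y_n^{-n}$, but it needs saying.)

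The genuine gap is the abelianness of $H/H(1)$, which is the heart of the theorem. Your entire argument is the assertion that ``simultaneous tidiness forces a joint decomposition of $U$ on which the conjugation actions of $x$ and $y$ commute up to $H(1)$'' and that this ``can be used to show'' that $U[x,y]U$ is a single coset. No such decomposition is constructed, no reason for its existence is given, and no computation with Corollary~\ref{for:cosets and scale} is actually carried out; the phrase ``commute up to $H(1)$'' is essentially a restatement of the desired conclusion $[x,y]\in H(1)$. Moreover, the joint factorisation of $U$ into components on which every element of $H$ acts by expansion or contraction (Theorem~\ref{thm:flat group decomp} here, i.e.\ \cite[Lemma 6.3 and Theorem 6.8]{Wi:SimulTriang}) is itself part of the flat-group structure theory that Willis develops together with Theorem 4.15, so invoking it as an input is circular in spirit and, in any case, unproved in your write-up. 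Showing that every commutator $[x,y]$, for not-necessarily-commuting $x,y\in H$, satisfies $[x,y]U[x,y]^{-1}=U$ is a substantial argument in \cite{Wi:SimulTriang}, and nothing in your sketch substitutes for it; as it stands, the key claim is asserted rather than proved.
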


\begin{definition}
\label{defn:uniscalar}
The subgroup $H(1)$ is called the 
\emph{uniscalar} subgroup of the flat group~$H$ 
and the exponent $n$ appearing in (\ref{eq:flat}) is 
called the \emph{flat-rank} of~$H$. 
\end{definition}

In the next result the subgroup $U$ is expressed as 
a product of some of its subgroups. 
The subgroups $U_j$ need not be normal and 
cannot, in general, be permuted. 
That is, it is not true in general that $U_iU_j = U_jU_i$. 
The statement that $U = U_0U_1\cdots U_q$ means that 
$U$ is equal to the product of the subgroups in that order. 
Hence for each $u\in U$ there are $u_j\in U_j$ such that $u = u_0u_1\dots u_q$. 
Although the subgroups cannot be permuted, 
the order in which the subgroups appear need not be unique. 
Proposition~\ref{prop:common_factor} below indicates that 
there is a different order corresponding to 
each semigroup that is multiplicative over~$U$. 

\begin{theorem}\cite[Lemma 6.3 and Theorem 6.8]{Wi:SimulTriang}
\label{thm:flat group decomp}
Let $H$ be a finitely generated, flat subgroup of $G$ and 
suppose that $U$ is a compact open subgroup of $G$ that is tidy for $H$. 
Then there are compact subgroups $U_0$, $U_1$, \dots, $U_q$ of $U$ 
such that
\[
U = U_0U_1 \cdots U_q,
\]
where $xU_0x^{-1} = U_0$ for every $x\in H$ and, 
for each $j\in \{1,\dots, q\}$ and every $x\in H$, 
either $xU_jx^{-1}\geq U_j$ or $xU_jx^{-1}\leq U_j$. Moreover, for each $j\in \{1,\dots, q\}$ we have that $U_j = \bigcap \left\{ xUx^{-1} \mid x\in H \text{ and }xU_jx^{-1} \geq U_j\right\}$ and there is $x\in H$ with $xU_jx^{-1} > U_j$.
\end{theorem}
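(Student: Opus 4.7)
The plan is to produce the decomposition by combining the $\ZZ^n$ structure on $H/H(1)$ given by Theorem \ref{N normal} with the tidy factorisation $U = U_+U_-$ from condition \textbf{TA} of Definition \ref{defn:tidy}, applied iteratively in each of the $n$ independent ``directions'' of $H/H(1)\cong\ZZ^n$. The monotone pieces $U_j$ will correspond to the finitely many primitive linear half-spaces of $\ZZ^n$ that arise from the tidy structure of the generators of $H$.

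First I would set $U_0 := \bigcap_{x\in H} xUx^{-1}$ and check that this is a compact open subgroup normalised by every $x\in H$. Compactness is immediate from intersecting compacta, and $H$-normalisation is built into the definition; openness uses finite generation of $H/H(1)\cong\ZZ^n$ together with tidiness of $U$ for each generator. Specifically, lifting generators $\bar{x}_1,\dots,\bar{x}_n$ of $H/H(1)$ to $x_1,\dots,x_n\in H$, the subgroups $(U_\pm)_{x_i} = \bigcap_{k\geq 0}x_i^{\pm k}Ux_i^{\mp k}$ are compact and open by \textbf{TA} combined with Theorem \ref{thm:tidy}, and a suitable finite sub-intersection of such subgroups produces $U_0$.

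Next, for each nontrivial primitive linear functional $\phi\colon H/H(1)\to\ZZ$, I would define
\[
V_\phi := \bigcap\left\{\,xUx^{-1}\,:\,x\in H,\ \phi(xH(1))\geq 0\,\right\}.
\]
A direct index shift shows $yV_\phi y^{-1}\supseteq V_\phi$ when $\phi(yH(1))\geq 0$ and $yV_\phi y^{-1}\subseteq V_\phi$ otherwise, so the ``moreover'' intersection formula $V_\phi = \bigcap\{xUx^{-1}:xV_\phi x^{-1}\geq V_\phi\}$ is built into the definition. Whenever $V_\phi\neq U_0$, the subgroup $V_\phi$ cannot be fixed by all of $H$, so some $y$ with $\phi(yH(1))>0$ gives $yV_\phi y^{-1}>V_\phi$ strictly. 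The main obstacle here is to show that only finitely many primitive $\phi$ satisfy $V_\phi\ne U_0$; the plan is to argue that, modulo $U_0$, only finitely many distinct ``extremal'' conjugates of $U$ by elements of $H$ occur, a fact to be extracted from the tidy factorisations of $x_1,\dots,x_n$ and the finite combinatorial structure they impose on half-space intersections. Let $\phi_1,\dots,\phi_q$ be the resulting finite list, and set $U_j := V_{\phi_j}$.

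Finally, I would prove the product decomposition $U = U_0U_1\cdots U_q$ by induction on the flat rank $n$. The base case $n=1$ is precisely \textbf{TA} (the two primitive functionals $\pm\phi$ on $\ZZ$ yielding $U_+$ and $U_-$, with $U_0$ absorbing any uniscalar contribution). For $n>1$, pick a generator $x_n$ representing a $\ZZ$-summand of $H/H(1)$; \textbf{TA} gives $U = (U_+)_{x_n}(U_-)_{x_n}$, and the inductive hypothesis, applied to each factor under the rank-$(n-1)$ flat group $\langle H(1),x_1,\dots,x_{n-1}\rangle$, breaks each factor into an ordered product of half-space subgroups of lower rank. Reassembling these into a single ordered product at rank $n$ is the second obstacle: subgroup multiplication in $U$ is not literally commutative, but commutativity of $H/H(1)$ implies that $V_{\phi_i}V_{\phi_j}$ and $V_{\phi_j}V_{\phi_i}$ determine the same set modulo $U_0$, so after absorbing $U_0$ into one factor the remaining pieces can be reordered to yield the required ordered product.
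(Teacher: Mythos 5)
This statement is not proved in the paper at all: it is quoted verbatim from Willis, \emph{Tidy subgroups for commuting automorphisms\ldots} (Lemma~6.3 and Theorem~6.8 of \cite{Wi:SimulTriang}), where the proof occupies a substantial part of Sections~4--6 and proceeds by an induction over generators using the full tidying procedure. So you are attempting to reprove a deep external theorem, and your sketch, while it has the right flavour (your $V_\phi$ is exactly the intersection appearing in the ``moreover'' clause, and the comparability of $yV_\phi y^{-1}$ with $V_\phi$ by the index shift is fine), leaves the genuinely hard parts unproved. Concretely: (a) your first step is already false as stated --- $U_0=\bigcap_{x\in H}xUx^{-1}$ is compact but in general neither open nor a finite sub-intersection of conjugates; for $G=\mathbb{Q}_p\rtimes\mathbb{Z}$ with $U=\mathbb{Z}_p$ and $H=\mathbb{Z}$ acting by multiplication by $p^{-1}$, the intersection is $\{0\}$, which is not open (the theorem only asserts compactness of the $U_j$, and openness is not available to feed an induction). (b) The finiteness of the set of primitive functionals $\phi$ with $V_\phi\neq U_0$ is exactly the content of Willis's finiteness of the ``eigenfactors''; your plan to extract it from ``finitely many extremal conjugates'' is the theorem itself in disguise, not an argument.

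(c) The inductive proof of $U=U_0U_1\cdots U_q$ does not run as described. To apply the inductive hypothesis to $U_{x_n+}$ and $U_{x_n-}$ you would need these to be compact \emph{open} subgroups tidy for the rank-$(n-1)$ group inside some totally disconnected, locally compact group; $U_{x_n\pm}$ are not open in $G$, and the natural closed envelope $\bigcup_k x_n^kU_{x_n+}x_n^{-k}$ is not invariant under $x_1,\dots,x_{n-1}$ (conjugating $U_{x_n+}=\bigcap_{k\geq0}x_n^kUx_n^{-k}$ by $x_1$ replaces $U$ by $x_1Ux_1^{-1}$, which is a different tidy subgroup), so tidiness of the pieces for the smaller group is not inherited for free --- this compatibility is precisely what Willis's argument establishes. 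Finally, the reassembly step rests on the claim that $V_{\phi_i}V_{\phi_j}$ and $V_{\phi_j}V_{\phi_i}$ agree ``modulo $U_0$''; this is unjustified, and the paper itself warns immediately before the theorem that the components $U_j$ need not permute, i.e.\ $U_iU_j\ne U_jU_i$ in general, with only certain orderings (those adapted to a multiplicative semigroup, as in Proposition~\ref{prop:common_factor}) giving a product equal to $U$. So the ordered product decomposition, which is the heart of the theorem, remains unproved in your proposal; if you need this result you should cite \cite{Wi:SimulTriang} as the paper does.
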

We sometimes refer to the $U_j$ as the \emph{components} of $U$.
We remark that $q=0$ if and only if $U$ is normalised by $H$, and that $U_0$ is redundant if $q>0$ 
because it is equal to $\bigcap_{j=1}^q U_j$ in this case. Cases of Theorem~\ref{thm:flat group decomp} 
when $q>0$ are illustrated by Examples~\ref{ex:tree} and~\ref{ex:Cartdec}. In Example~\ref{ex:tree}, $q=2$ and 
the subgroups $U_j$ are $U_\pm$ while, in Example~\ref{ex:Cartdec}, $q=2n$ and the subgroups $U_j$ of Theorem~\ref{thm:flat group decomp}  
are the subgroups $(U_i)_\pm$, with the $U_i$ ($i=1,\dots, n$) as in Example~\ref{ex:Cartdec}. 
Note that the subgroups  in these examples all permute with each other, namely $(U_i)_{\pm}$ and $(U_j)_{\pm}$ 
centralise each other if $i\ne j$, and $(U_i)_{+}(U_i)_{-} = (U_i)_{-}(U_i)_{+} = U_i$ for each $i$. 
Nevertheless the subgroups $(U_i)_+$ and $(U_i)_-$ do not normalise each other. 

We need some further observations about 
the decomposition in Theorem~\ref{thm:flat group decomp} above.
Part~(\ref{scale decomposition}) of Lemma~\ref{details of decomp} can be found 
in~\cite[Lemma~6.3, Theorem~6.12, Theorem~6.14]{Wi:SimulTriang}.
It is included here with as much discussion 
as is needed to set up notation for later use.

\begin{lemma}
\label{details of decomp}
Assume the notation as in Theorem~\ref{thm:flat group decomp}. 
\begin{enumerate}
\item\label{dilation closed} For each $j\in\{1,\dots, q\}$, 
 $\bigcup_{x\in H} x U_jx^{-1}$ is 
a closed group that is stable under conjug\-ation by~$H$.
\item\label{scale decomposition} For each $j\in\{1,\dots, q\}$, 
define $s_j=\min\{|xU_jx^{-1}:U_j|\colon x\in H, xU_jx^{-1}>U_j\}$. 
Then  there exist homomorphisms $\rho_j\colon H \to \ZZ$ such that 
\begin{equation}
\label{eq:scale_formula}
 s(x) = \prod\left\{ s_j^{\rho_j(x)\vee 0} \mid j\in\{1,\dots, q\}\right\}
\end{equation}
where $\rho_j(x)\vee 0=\max\{\rho_j(x),0\}$.
Moreover, 
$H(1) = \bigcap_{j\in\{1,\dots,q\}} \ker\rho_j$, where $H(1)$ is as in Theorem~\ref{N normal}. 
\end{enumerate}
\end{lemma}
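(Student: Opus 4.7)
The plan is to first establish a chain property for the conjugates $\{xU_jx^{-1}\colon x\in H\}$ of each component $U_j$, and then build the homomorphisms $\rho_j$ from this chain structure. Given the dichotomy from Theorem~\ref{thm:flat group decomp} that $xU_jx^{-1}$ is comparable with $U_j$ for every $x\in H$, I would apply this to $g=y^{-1}x$ and conjugate by $y$ to conclude that $xU_jx^{-1}$ and $yU_jy^{-1}$ are themselves comparable under inclusion for any $x,y\in H$. Hence $L_j:=\bigcup_{x\in H}xU_jx^{-1}$ is the ascending union of a totally ordered family of compact subgroups, is itself a subgroup of $G$, and is manifestly stable under $H$-conjugation.

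For the closedness assertion in part~(\ref{dilation closed}), the existence of an $x_0\in H$ with $x_0U_jx_0^{-1}>U_j$ (guaranteed by Theorem~\ref{thm:flat group decomp}), combined with the chain property and the fact that compact subgroups of equal Haar measure nested in a chain must coincide, lets me reduce to the single-element case $L_j=\bigcup_{n\geq 0}x_0^nU_jx_0^{-n}$. I would then invoke~\cite[Lemma~6.3]{Wi:SimulTriang} directly, or alternatively derive closedness from condition~{\bf TB} of Definition~\ref{defn:tidy} applied to $x_0$, observing that $U_j\leq U_+$ so that $L_j$ is sandwiched inside the closed subgroup $U_{++}$.

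For part~(\ref{scale decomposition}), the chain structure together with the minimality of $s_j$ forces every index $|xU_jx^{-1}:U_j|$ (when the inclusion goes this way) to be a non-negative power of $s_j$, so
\[
\rho_j(x):=\begin{cases}\log_{s_j}|xU_jx^{-1}:U_j|,& xU_jx^{-1}\supseteq U_j,\\[2pt] -\log_{s_j}|U_j:xU_jx^{-1}|,& xU_jx^{-1}\subseteq U_j,\end{cases}
\]
defines an integer-valued function on $H$. That $\rho_j$ is a group homomorphism follows cleanly from the observation that $s_j^{\rho_j(x)}$ equals the module of the inner automorphism $\alpha_x$ restricted to the locally compact group $L_j$; the module is always a homomorphism into $(\mathbb{R}_+^*,\times)$, and the chain property confines its image to $s_j^{\mathbb{Z}}$.

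The scale formula~\eqref{eq:scale_formula} then emerges from Corollary~\ref{for:cosets and scale} together with the factorisation $U=U_0U_1\cdots U_q$: since $xU_0x^{-1}=U_0$ lies inside every other $U_j$ by Theorem~\ref{thm:flat group decomp}, only the components with $j\geq 1$ contribute, and a componentwise coset count shows that each $U_j$ contributes a factor of $s_j^{\rho_j(x)}$ when $\rho_j(x)\geq 0$ and of $1$ when $\rho_j(x)\leq 0$, giving the $\rho_j(x)\vee 0$ truncation. Finally, $x\in H(1)$ iff $xUx^{-1}=U$ iff $xU_jx^{-1}=U_j$ for every $j$, which is equivalent to $\rho_j(x)=0$ for all $j$, yielding $H(1)=\bigcap_j\ker\rho_j$. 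The main obstacle is justifying the componentwise index count, since the $U_j$ need not commute: I would handle this inductively on $q$, choosing at each step an ordering of the product $U=U_0U_1\cdots U_q$ that places the expanding components (those with $\rho_j(x)\geq 0$) first, and using that the contracting components already lie in $U$ to show they contribute trivially to $|xUx^{-1}:xUx^{-1}\cap U|$.
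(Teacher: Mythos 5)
Your treatment of part~(\ref{dilation closed}) and of the construction of the $\rho_j$ follows the paper's own route: the comparability of $xU_jx^{-1}$ and $yU_jy^{-1}$ via the dichotomy applied to $y^{-1}x$, the sandwich argument showing that minimality of $s_j$ forces every conjugate to be $x_j^nU_jx_j^{-n}$ for some $n\in\ZZ$ (so all indices are powers of $s_j$), and the identification of $s_j^{\rho_j(x)}$ with the module of $\alpha_x$ restricted to $\bigcup_{x\in H}xU_jx^{-1}$ to get the homomorphism property are exactly the paper's steps; citing \cite[Lemma~6.3]{Wi:SimulTriang} for closedness is on a par with the paper's citation of \cite[Proposition~1]{Wi:structure}. (Your \emph{alternative} closedness argument is a non sequitur, though: a subgroup sandwiched inside the closed group $U_{++}$ need not itself be closed, so condition {\bf TB} alone does not close this; and the ``equal Haar measure'' reduction is shaky since a non-open compact subgroup such as $U_j$ can have Haar measure zero in $G$ -- the cofinality of $\{x_0^nU_jx_0^{-n}\}$ really rests on the finite-index facts you use later.)

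The genuine gap is in the two statements the paper deliberately does \emph{not} reprove but cites: the scale formula~\eqref{eq:scale_formula} is taken from \cite[Theorem~6.12]{Wi:SimulTriang} and the identification $H(1)=\bigcap_j\ker\rho_j$ from \cite[Proposition~6.4]{Wi:SimulTriang}. Your sketch of the formula hinges on the ``componentwise coset count,'' i.e.\ on the claims that the factors can be reordered so that the expanding components come first and that the index $|xUx^{-1}:xUx^{-1}\cap U|$ then factors as $\prod_j|xU_jx^{-1}:U_j|$ over those components. Since the $U_j$ neither commute nor normalise one another, this is precisely the technical content of Willis's Theorem~6.12 (and of the $U=U_+U_-$ refinement underlying Proposition~\ref{prop:common_factor}); announcing an induction on $q$ does not supply it, so as written this step is missing rather than proved. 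Similarly, in your final paragraph the implication $xUx^{-1}=U\Rightarrow xU_jx^{-1}=U_j$ for all $j$ is not immediate, because $U_j$ is not open in $U$ and so one cannot rule out $U_j<xU_jx^{-1}\leq U$ by a naive index comparison; it does follow, but via the scale formula itself: $xUx^{-1}=U$ and minimality of $U$ give $s(x)=s(x^{-1})=1$, whence $\rho_j(x)\vee 0=\rho_j(x^{-1})\vee 0=0$, i.e.\ $\rho_j(x)=0$ for all $j$. Either supply arguments of this kind or, as the paper does, quote \cite[Theorem~6.12 and Proposition~6.4]{Wi:SimulTriang} for these two statements.
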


\begin{proof}
\eqref{dilation closed}
To prove that $\bigcup_{x\in H} x U_jx^{-1}$ is a group, it is sufficient to establish that the set $\left\{ x U_jx^{-1} \mid x\in H\right\}$ is linearly ordered by inclusion. 
Let $x_1,x_2\in H$. 
Since $H$ is a group $x_2^{-1}x_1\in H$ and so, 
by Theorem~\ref{thm:flat group decomp},
either $x_2^{-1}x_1U_j(x_2^{-1}x_1)^{-1}\geq U_j$ or 
$x_2^{-1}x_1U_j(x_2^{-1}x_1)^{-1}$ $\leq U_j$.
Hence either $x_1U_jx_1^{-1}\geq x_2U_jx_2^{-1}$ 
or $x_1U_jx_1^{-1}\leq x_2U_jx_2^{-1}$, as required. 
Stability under conjugation by~$H$ is clear and $\bigcup_{x\in H} x U_jx^{-1}$ is closed by~\cite[Proposition 1]{Wi:structure}.

\eqref{scale decomposition} 
All subgroup indices are positive integers because $x U_jx^{-1}$ is compact and, when $U_j$ is a subgroup, it is open. Hence  
there is, for each $j\in \{1,\dots, q\}$, 
an $x_j\in H$ such that $x_jU_jx_j^{-1}> U_j$ and 
$|x_jU_jx_j^{-1} : U_j|$ is a minimum. 
Call this minimum index $s_j$. 
Then $\left\{ x_j^nU_j x_j^{-n}\right\}_{n\in\mathbb{Z}}$ 
is linearly ordered and 
$|x_j^nU_j x_j^{-n} : x_j^m U_j x_j^{-m}| = s_j^{n-m}$
for $m\leq n\in \mathbb{Z}$. 

For each $x\in H$ and $n\in \mathbb{Z}$,  $x_j^{-n}x U_j (x_j^{-n}x)^{-1}$ is either a subgroup or overgroup of~$U_j$. Hence either $xU_jx^{-1} \leq x_j^nU_j x_j^{-n}$ or $xU_jx^{-1} \geq x_j^nU_j x_j^{-n}$ for every $x\in H$ and $n\in\mathbb{Z}$. If 
$$
x_j^nU_j x_j^{-n} < xU_jx^{-1} < x_j^{n+1}U_j x_j^{-n-1}
$$ 
for some $n$, then $|x_j^{-n}x U_j (x_j^{-n}x)^{-1} : U_j| < s_j$, contradicting the choice of $x_j$.  Hence there is $n\in\mathbb{Z}$ such that $xU_jx^{-1} = x_j^nU_j x_j^{-n}$. Defining 
$$
\rho_j(x)  = 
\begin{cases}
\log_{s_j} |xU_jx^{-1} : U_j| & \text{ if } xU_jx^{-1}\geq U_j  \\
-\rho_j(x^{-1}) & \text{otherwise,}
\end{cases}
$$
we obtain a homomorphism   $\rho_j : H \to \mathbb{Z}$ satisfying 
$$
s_j^{\rho_j(x)} = \begin{cases}
|xU_jx^{-1} : U_j|, & \text{ if } xU_jx^{-1}\geq U_j \\
|U_j : xU_jx^{-1} |^{-1}, & \text{ if }xU_jx^{-1}\leq U_j 
\end{cases}.
$$ 
Note that conjugation by $x\in H$ is an automorphism, $\alpha_x$, of the locally compact group $\bigcup_{x\in H} x U_jx^{-1}$ and that $s_j^{\rho_j(x)} = \Delta(\alpha_x)$ is the module of this automorphism. That $\rho_j$ is a homomorphism, or equivalently, the fact that $x\mapsto s_j^{\rho_j(x)} : H \to (\mathbb{R}^+, \times)$ is a group homomorphism, thus corresponds to the fact that the modular function is a homomorphism.

By Theorem~\cite[Theorem~6.12]{Wi:SimulTriang},
the scale of $x$ is 
\begin{equation}
\tag{\ref{eq:scale_formula}}
 s(x) = \prod\left\{ s_j^{\rho_j(x)\vee 0} \mid j\in\{1,\dots, q\}\right\}.
\end{equation}
The statement about the uniscalar subgroup of $H$ follows 
from~\cite[Proposition~6.4]{Wi:SimulTriang}.
\end{proof}

The core of our construction relies on $s$-multiplicative semigroups. Since the scale cannot be both nontrivial and multiplicative on a group, the following result is the best that can be expected for flat subgroups of~$G$.

\begin{lemma}
\label{lem:not_multiplicative}
The scale function is submultiplicative on a flat group~$H$, that is, 
$$
s(xy)\leq s(x)s(y), \quad \text{ for all } x,y \in H.
$$
The inequality is strict if and only if there exists $j\in\{1,\dots, q\}$ such that 
$\rho_j(x)$ and $\rho_j(y)$ are non-zero and have opposite sign, with $\rho_j$ as in Lemma~\ref{details of decomp}.
\end{lemma}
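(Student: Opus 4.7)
The plan is to reduce the claim to a term-by-term comparison inside the product formula~\eqref{eq:scale_formula} of Lemma~\ref{details of decomp}. Since each $\rho_j\colon H\to\mathbb{Z}$ is a group homomorphism, we have $\rho_j(xy)=\rho_j(x)+\rho_j(y)$ for all $x,y\in H$ and all $j\in\{1,\dots,q\}$. Substituting into the scale formula yields
\[
s(xy)=\prod_{j=1}^{q} s_j^{(\rho_j(x)+\rho_j(y))\vee 0},\qquad
s(x)s(y)=\prod_{j=1}^{q} s_j^{(\rho_j(x)\vee 0)+(\rho_j(y)\vee 0)}.
\]
Because each base $s_j\ge 2$ (by the definition of $s_j$ as the minimum index $|x_j U_j x_j^{-1}:U_j|$ over those $x_j\in H$ with $x_jU_jx_j^{-1}>U_j$, which forces the index to exceed~$1$), the claim follows at once from the corresponding inequality on exponents.

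The key step is therefore the elementary real-variable fact that, for any $a,b\in\mathbb{R}$,
\[
(a+b)\vee 0 \;\le\; (a\vee 0)+(b\vee 0),
\]
with equality if and only if $a$ and $b$ are not both nonzero with opposite signs. I would verify this by considering the four sign combinations of $(a,b)$: both nonnegative or both nonpositive yield equality directly, while in the mixed cases $a>0>b$ (or $b>0>a$) the right-hand side equals the positive summand whereas the left-hand side is strictly smaller, since the negative contribution genuinely reduces the sum below that positive value.

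Applying this inequality with $a=\rho_j(x)$ and $b=\rho_j(y)$ for each $j$ gives the submultiplicativity $s(xy)\le s(x)s(y)$. For the strictness criterion, a strict inequality in any one exponent propagates to a strict inequality in the product (because $s_j\ge 2$), and conversely if $\rho_j(x)$ and $\rho_j(y)$ never have strictly opposite signs then equality holds in every factor. Thus $s(xy)<s(x)s(y)$ exactly when there exists some index $j\in\{1,\dots,q\}$ for which $\rho_j(x)$ and $\rho_j(y)$ are both nonzero and of opposite sign, as stated.

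There is no real obstacle here: the proof is a direct computation once Lemma~\ref{details of decomp} is in hand. The only point requiring a moment of care is the equality analysis, where one must keep track of the convention that $\rho_j(x)$ or $\rho_j(y)$ being zero still gives equality in that factor, so that strictness demands both being nonzero and of opposite signs.
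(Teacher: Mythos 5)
Your proposal is correct and follows essentially the same route as the paper: substitute the homomorphism property of the $\rho_j$ into the scale formula~\eqref{eq:scale_formula} and use the elementary inequality $(a+b)\vee 0\leq(a\vee 0)+(b\vee 0)$ together with its equality analysis. You are in fact slightly more careful than the paper in noting explicitly that $s_j\geq 2$ (so strictness in an exponent propagates to the product) and in checking both directions of the strictness criterion.
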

\begin{proof}
By Equation~(\ref{eq:scale_formula}),
$$
s(xy) = \prod\left\{ s_j^{\rho_j(xy)\vee 0} \mid j\in\{1,\dots, q\}\right\}.
$$
Since $\rho_j$ is a homomorphism, 
$$
\rho_j(xy)\vee 0 \leq (\rho_j(x)\vee 0) + (\rho_j(y)\vee 0) \hbox{ for each }j\in\{1,\dots, q\},
$$ 
and it follows that $s(xy)\leq s(x)s(y)$ as claimed. The inequality is strict only if there 
exists $j$ such that $\rho_j(xy)\vee 0 < (\rho_j(x)\vee 0) + (\rho_j(y)\vee 0)$, and this occurs only if~$\rho_j(x)$ and~$\rho_j(y)$ are both non-zero and have opposite sign.
\end{proof}

\subsection{Subsemigroups of flat groups}

We now focus on the implications for subsemigroups of flat groups.
The first result is a simple but useful observation about 
$s$-multiplicative subsemigroups of $H$ that 
follows directly from Lemma~\ref{lem:not_multiplicative} above.
Recall Definition~\ref{defn:s-multiplicative}. 

\begin{corollary}
\label{uniscalar intersection}
Suppose $P$ is an $s$-multiplicative subsemigroup of a flat group $H$ and $x\in H$ has $s(x)>1$. 
Then $P$ cannot contain both $x$ and $x^{-1}$. 
In particular, $P\cap P^{-1}\subseteq H(1)$.
\end{corollary}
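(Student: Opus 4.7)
The plan is to exploit $s$-multiplicativity together with the elementary fact that $s(1)=1$. The essential observation is that if a semigroup $P$ contains both an element $x$ and its inverse, then it contains their product, namely the identity element of $H$. Since $P$ is a sub-semigroup of $H$, this gives $1\in P$, and so $s$-multiplicativity can be applied to the decomposition $1=x\cdot x^{-1}$.

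First I would establish the contrapositive of the main claim: suppose, for contradiction, that $x$ and $x^{-1}$ both lie in $P$. Then $1=xx^{-1}\in P$ and
\[
1=s(1)=s(xx^{-1})=s(x)\,s(x^{-1}).
\]
Since the scale takes values in the positive integers, this forces $s(x)=s(x^{-1})=1$, contradicting the assumption $s(x)>1$. This gives the first statement immediately.

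For the ``in particular'' clause, take $y\in P\cap P^{-1}$, so that both $y$ and $y^{-1}$ belong to $P$. The first part applied to $y$ (and to $y^{-1}$) yields $s(y)=s(y^{-1})=1$. To conclude that $y\in H(1)$, I would invoke the scale formula from Lemma~\ref{details of decomp}(\ref{scale decomposition}): since each $s_j\geq 2$, the identity
\[
1=s(y)=\prod_{j=1}^q s_j^{\rho_j(y)\vee 0}
\]
forces $\rho_j(y)\leq 0$ for every $j$, and the analogous identity for $y^{-1}$ together with the fact that each $\rho_j$ is a homomorphism forces $\rho_j(y)\geq 0$ for every $j$. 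Hence $\rho_j(y)=0$ for all $j$, and the characterisation $H(1)=\bigcap_j\ker\rho_j$ from the same lemma gives $y\in H(1)$.

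No step here is genuinely difficult; the only subtle point is ensuring that the identity is actually present in $P$ so that $s$-multiplicativity can be invoked, which is why I would spell out that $xx^{-1}\in P$ as a semigroup product. The rest is a direct appeal to Lemma~\ref{details of decomp} and the positivity of the scale.
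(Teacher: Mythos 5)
Your proof is correct and follows essentially the same route as the paper: both arguments hinge on the identity $1=s(1)=s(xx^{-1})=s(x)s(x^{-1})$ supplied by $s$-multiplicativity, and both deduce $P\cap P^{-1}\subseteq H(1)$ from the vanishing of every $\rho_j$ via Lemma~\ref{details of decomp}. The only differences are cosmetic: the paper obtains its contradiction by citing the strict-inequality criterion of Lemma~\ref{lem:not_multiplicative}, whereas you use the positivity and integrality of the scale directly (which is fine), and you spell out the ``in particular'' step that the paper leaves implicit.
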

\begin{proof}
Since each $\rho_j$ is a homomorphism, $\rho_j(x^{-1})=-\rho_j(x)$. Since $s(x)>1$, it follows from Equation~(\ref{eq:scale_formula})
that $\rho_j(x)>0$ for some $j$. Hence $\rho_j(x^{-1}) <0$. 
Also it follows from Equation~(\ref{eq:scale_formula}) that $s(1)=1$.
If both $x, x^{-1}\in P$, then since $P$ is $s$-multiplicative, 
$1=s(1)=s(x x^{-1}) = s(x) s(x^{-1})$. This however contradicts Lemma~\ref{lem:not_multiplicative}.
Thus, for any $x$ with $s(x)>1$, $P$ contains at most one of $x$ and $x^{-1}$. 
This implies the last assertion. 
\end{proof}

The following definition allows us to keep track of the components of the tidy subgroup  $U$ on which a semigroup can act as an expansion or as a contraction.

\begin{definition}
\label{JP+ and JP-}
For each subsemigroup $P$ of the flat group $H$ define
\begin{eqnarray*}
J_P^+ &=& \left\{ j\in\{1,\dots, q\} \mid \exists x\in P \hbox{ with } \rho_j(x)>0 \right\}\\
J_P^- &=& \left\{ j\in\{1,\dots, q\} \mid \exists x\in P \hbox{ with } \rho_j(x)<0 \right\}.
\end{eqnarray*}
Thus if $j\in J_P^+$ then there is an $x\in P$ for which $xU_jx^{-1}>U_j$ etc.
\end{definition}

\begin{lemma}
\label{lem:multiplicative_supports}
Let $P$ be an $s$-multiplicative subsemigroup of the finitely generated flat group $H$. Then 
\begin{enumerate}
\item \label{opposite}
$J_P^+\cap J_P^- = \emptyset$ 
\item
\label{indicator element}
there exists an $x\in P$ such that 
$\rho_j(x)>0$ for every $j\in J_P^+$ and $\rho_j(x)<0$ for every $j\in J_P^-$, and
\item \label{scale}
either $P\subseteq H(1)$, or $J_P^+\neq \emptyset$ and 
 $s(x) = \prod_{j\in J_P^+} s_j^{\rho_j(x)}$ for every $x\in P$.
\end{enumerate}
\end{lemma}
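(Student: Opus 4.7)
The proof proceeds part by part, each step feeding the next.

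For part~(1), I would argue by contradiction directly from Lemma~\ref{lem:not_multiplicative}. If $j\in J_P^+\cap J_P^-$, fix witnesses $x,y\in P$ with $\rho_j(x)>0$ and $\rho_j(y)<0$; that lemma then gives the strict inequality $s(xy)<s(x)s(y)$, but $xy\in P$ and $s$-multiplicativity of $P$ forces equality, a contradiction. As a byproduct I record the following sign constraint, to be re-used throughout: for $k\in J_P^+$ every $z\in P$ must satisfy $\rho_k(z)\geq 0$ (otherwise $k$ would lie in $J_P^-$ as well), and symmetrically $\rho_k(z)\leq 0$ when $k\in J_P^-$.

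For part~(2), I would assemble the required element as a product of single-coordinate witnesses. For each $j\in J_P^+$ choose $x_j\in P$ realising $\rho_j(x_j)>0$, and for each $j\in J_P^-$ choose $y_j\in P$ realising $\rho_j(y_j)<0$. Set
\[
x:=\prod_{j\in J_P^+}x_j\cdot\prod_{j\in J_P^-}y_j\in P,
\]
multiplied in some fixed order. Applying each homomorphism $\rho_k$ expresses $\rho_k(x)$ as a sum of $|J_P^+|+|J_P^-|$ integer terms; by the sign constraint from~(1) every summand has the right sign, and the distinguished term $\rho_k(x_k)$ (respectively $\rho_k(y_k)$) is strictly positive (respectively strictly negative). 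Hence $\rho_k(x)>0$ for $k\in J_P^+$ and $\rho_k(x)<0$ for $k\in J_P^-$, as required.

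For part~(3), I would substitute the sign constraint into the general scale formula~(\ref{eq:scale_formula}). For any $x\in P$: if $j\in J_P^+$ then $\rho_j(x)\geq 0$, so $\rho_j(x)\vee 0=\rho_j(x)$; if $j\notin J_P^+$ then no element of $P$ has $\rho_j>0$, so $\rho_j(x)\leq 0$ and the factor $s_j^{\rho_j(x)\vee 0}=1$ drops out. This yields $s(x)=\prod_{j\in J_P^+}s_j^{\rho_j(x)}$ for every $x\in P$, with the empty product understood as~$1$. If $P\subseteq H(1)$ the formula reduces trivially to $s(x)=1$; otherwise one picks $x\in P\setminus H(1)$ and, using $H(1)=\bigcap_j\ker\rho_j$, finds some $\rho_j(x)\neq 0$, which combined with the indicator element from~(2) forces $J_P^+\neq\emptyset$.

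The main obstacle I anticipate is part~(2): producing a single element whose images under all the $\rho_k$ carry the prescribed strict signs simultaneously. The essential insight is that part~(1) rules out cancellation---every coordinate of every chosen witness has the "right" sign---so the product inherits strict signs at the distinguished summands. Parts~(1) and~(3) are then largely sign-bookkeeping on the formula~(\ref{eq:scale_formula}).
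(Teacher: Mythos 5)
Your treatment of parts (1) and (2), and of the displayed formula in part (3), follows the paper's own argument essentially verbatim: the strict inequality of Lemma~\ref{lem:not_multiplicative} excludes $j\in J_P^+\cap J_P^-$; disjointness then yields the sign constraint $\rho_j\geq 0$ on all of $P$ for $j\in J_P^+$ (and $\rho_j\leq 0$ for $j\in J_P^-$), which is the paper's inequality \eqref{eq:order_property}; the product of one witness per index gives the indicator element exactly as in the paper; and substituting the sign constraint into \eqref{eq:scale_formula} gives $s(x)=\prod_{j\in J_P^+}s_j^{\rho_j(x)}$ for every $x\in P$.

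The step that does not go through is your closing dichotomy in (3). If the chosen $x\in P\setminus H(1)$ has only negative nonzero coordinates, you learn only that $J_P^-\neq\emptyset$; the indicator element of part (2) cannot manufacture a strictly positive coordinate, because when $J_P^+=\emptyset$ it merely satisfies $\rho_j(x)\leq 0$ for all $j$, so no contradiction with $J_P^+=\emptyset$ arises and the implication ``$P\not\subseteq H(1)\Rightarrow J_P^+\neq\emptyset$'' is not obtained. This case is genuinely delicate: $s(x)=1$ only says $xUx^{-1}\leq U$, whereas membership in $H(1)$ requires $xUx^{-1}=U$, i.e.\ $s(x^{-1})=1$ as well. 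Indeed, the semigroup $P_J$ with $J=\emptyset$ in Example~\ref{ex:5.1} is $s$-multiplicative with $J_P^+$ empty and $s\equiv 1$ on it, yet it is not contained in $H(1)$ (which is trivial there). The paper's own proof is terse at exactly the same spot, passing from ``$s(x)=1$ for all $x\in P$'' to ``$P\subseteq H(1)$''; so what your argument (like the paper's) actually establishes in the degenerate case $J_P^+=\emptyset$ is that the scale is identically $1$ on $P$ (the formula with empty product), not the final containment in $H(1)$. Everything else in your proposal matches the paper's proof.
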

\begin{proof}
\eqref{opposite}
Suppose there exists $j\in J_P^+\cap J_P^-$. 
Then there are $x\in P$ with $\rho_j(x)>0$ and $y\in P$ with $\rho_j(y)<0$.
By Lemma~\ref{lem:not_multiplicative}, this means $s(xy)<s(x)s(y)$,
contradicting the fact that the scale function is multiplicative on~$P$.
Hence $J_P^+\cap J_P^- = \emptyset$. 
Consequently, for each $j\in\{1,\ldots,q\}$ either 
every element of $P$ acts on $U_j$ in a noncontracting way, 
or every element acts in a nonexpansive way.
Since each $\rho_j$ is a homomorphism into $(\ZZ,+)$, 
this may be restated as the condition that for all $x,y,z\in P\cup \{\ident_G\}$ 
\begin{equation}
\label{eq:order_property}
\rho_j(xyz) \geq \rho_j(y) \hbox{ if }j\in J_P^+ 
\quad \text{ and }\quad 
\rho_j(xyz) \leq \rho_j(y) \hbox{ if }j\in J_P^-.
\end{equation}

\eqref{indicator element} Since $J_P^+\cap J_P^- = \emptyset$ 
each $j\in\{1,\dots, q\}$ is in at most one of $J_P^+$ or $J_P^-$.
If $j\in J_P^+$ choose $x_j\in P$ such that $\rho_j(x_j)>0$ and
if $j\in J_P^-$ choose $x_j$ such that $\rho_j(x_j)<0$.
Now let $x=\prod_{j\in J_P^+\cup J_P^-} x_j$ 
where the product can be taken in any order. 
Then $x\in P$ because $P$ is a semigroup, 
and~(\ref{eq:order_property}) implies that 
$\rho_j(x) \geq \rho_j(x_j)>0$ for every $j\in J_P^+$ and 
$\rho_j(x) \leq \rho_j(x_j) < 0$ for every $j\in J_P^-$ as required. 

\eqref{scale} 
If $J_P^+\neq \emptyset$ then the assertion follows directly from Equation~(\ref{eq:scale_formula}) and the definition of $J_P^+$.
If $J_P^+= \emptyset$, then $\rho_j(x)\vee 0=0$ for all $j\in\{1,\dots, q\}$ and $x\in P$. In this case it follows that $s(x)=1$ for all $x\in P$, and hence $P\subseteq H(1)$.
\end{proof}

\begin{proposition}
\label{prop:max_mult}
Every $s$-multiplicative subsemigroup, $P$, 
of a finitely generated flat group $H$ is contained in a semigroup 
that is maximal by inclusion for that property. 
For such a maximal $s$-multiplicative subsemigroup $P$:
\begin{enumerate}[(i)]
\item    
$J_P^+\cup J_P^- = \{1,\dots, q\}$ and $J_P^+\cap J_P^- = \emptyset$;
\label{prop:max_mult2}
\item $PP^{-1} = H$;  
\label{prop:max_mult3} and
\item $P\cap P^{-1} = H(1)$.
\label{prop:max_mult1}
\end{enumerate}
\end{proposition}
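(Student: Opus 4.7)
The plan is to prove existence of maximal $s$-multiplicative subsemigroups via Zorn's lemma, then show that any such maximal $P$ coincides with the half-space sub-semigroup $P_\epsilon:=\{x\in H\colon \epsilon_j\rho_j(x)\geq 0\ \text{for all } j\}$ determined by a sign pattern $\epsilon\in\{-1,+1\}^q$, and deduce (i), (ii), (iii) from this identification.

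For existence, I partially order the $s$-multiplicative sub-semigroups of $H$ containing the given $P$ by inclusion. A chain $\{P_\alpha\}$ has union $\bigcup_\alpha P_\alpha$, which is still an $s$-multiplicative semigroup because any two of its elements lie in some common $P_\alpha$. Zorn's lemma then delivers a maximal element.

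For (i), the disjointness $J_P^+\cap J_P^-=\emptyset$ is Lemma~\ref{lem:multiplicative_supports}(1). For the covering property, I assume for contradiction that some $j_0\in\{1,\dots,q\}$ lies in neither set, so that $\rho_{j_0}$ vanishes on $P$. By Theorem~\ref{thm:flat group decomp} and Lemma~\ref{details of decomp}(2) choose $x_0\in H$ with $\rho_{j_0}(x_0)>0$; if $J_P^+\cup J_P^-\ne\emptyset$ let $x_1\in P$ be the element furnished by Lemma~\ref{lem:multiplicative_supports}(2), otherwise put $x_1=1$. Set $y:=x_0 x_1^N$ for $N$ sufficiently large. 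Since each $\rho_j$ is a homomorphism, $\rho_j(y)=\rho_j(x_0)+N\rho_j(x_1)$, and for $N$ large the $N\rho_j(x_1)$ term dominates: $\rho_j(y)>0$ for $j\in J_P^+$, $\rho_j(y)<0$ for $j\in J_P^-$, and $\rho_{j_0}(y)=\rho_{j_0}(x_0)>0$. Because $H/H(1)\cong\ZZ^n$ is abelian and the $\rho_j$ factor through it, every element of the sub-semigroup $S:=\langle P\cup\{y\}\rangle$ projects into $H/H(1)$ as $a\bar y+\bar v$ with $a\geq0$ and $v\in P\cup\{1\}$; hence for each $j$ the values $\rho_j(S)$ share a common sign, and Lemma~\ref{lem:not_multiplicative} gives $s$-multiplicativity of $S$. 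Since $\rho_{j_0}(y)>0$ but $\rho_{j_0}$ vanishes on $P$, we have $y\notin P$, contradicting maximality.

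The same reasoning delivers $P=P_\epsilon$ for the pattern $\epsilon_j=+1$ if $j\in J_P^+$ and $\epsilon_j=-1$ if $j\in J_P^-$: the inclusion $P\subseteq P_\epsilon$ is immediate from the definition of $J_P^\pm$, while $P_\epsilon$ is $s$-multiplicative by Lemma~\ref{lem:not_multiplicative} (for $x,y\in P_\epsilon$ and each $j$, $\rho_j(x)$ and $\rho_j(y)$ share a sign), so maximality forces equality. Part (ii) follows by the same multiplication trick: for any $h\in H$ and $N$ large, $hx_1^N$ has the right sign on every $j$ to lie in $P_\epsilon=P$, whence $h=(hx_1^N)(x_1^N)^{-1}\in PP^{-1}$. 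Part (iii) combines Corollary~\ref{uniscalar intersection} (giving $P\cap P^{-1}\subseteq H(1)$) with the observation that any $x\in H(1)=\bigcap_j\ker\rho_j$ has $\rho_j(x)=0$ for all $j$, so $x,x^{-1}\in P_\epsilon=P$. The main obstacle is constructing $y$ in (i): its sign pattern must match $P$'s on both $J_P^+$ and $J_P^-$ simultaneously while being genuinely new on $j_0$, and the trick of multiplying $x_0$ by a high power of the ``generic'' element $x_1$ from Lemma~\ref{lem:multiplicative_supports}(2) exploits the abelianity of $H/H(1)$ to reduce matters to additive $\ZZ$-arithmetic.
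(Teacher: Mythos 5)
Your proof is correct, and for the existence statement and part (i) it follows essentially the paper's route: Zorn's lemma, then, for an index $j_0\notin J_P^+\cup J_P^-$, adjoin to $P$ an element of the form (element of $H$ with $\rho_{j_0}>0$) times a high power of the ``generic'' element supplied by Lemma~\ref{lem:multiplicative_supports}(\ref{indicator element}), and check that the enlarged semigroup is still $s$-multiplicative via Lemma~\ref{lem:not_multiplicative}. The paper verifies this last point by computing $J^{\pm}_{\tilde P}$ exactly, whereas you observe that modulo $H(1)$ every element of the enlarged semigroup has image $a\bar y+\bar v$ with $a\geq0$ and $v\in P\cup\{1\}$, so each $\rho_j$ keeps a constant sign; this is lighter bookkeeping with the same content (in effect you reprove what the paper records as Corollary~\ref{cor:max_mult}). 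Where you genuinely diverge is in (ii) and (iii): you first upgrade (i) to the identification $P=P_\epsilon$ of any maximal $s$-multiplicative subsemigroup with the sign-orthant determined by $J_P^\pm$ (using maximality plus the fact that $P_\epsilon$ is $s$-multiplicative by Lemma~\ref{lem:not_multiplicative}), and then read off (ii) by pushing an arbitrary $h\in H$ into $P_\epsilon$ with a high power of the generic element, and (iii) from $H(1)=\bigcap_j\ker\rho_j$ (Lemma~\ref{details of decomp}(\ref{scale decomposition})). The paper instead proves (ii) by re-running the argument of (i) to conclude $yx^n\in P$ directly, and proves (iii) by noting that $PH(1)$ is an $s$-multiplicative semigroup (using normality of $H(1)$) and invoking maximality. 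Your route yields the stronger structural statement that the maximal $s$-multiplicative subsemigroups are exactly the orthants $P_\epsilon$, which makes (ii) and (iii) immediate; the paper's route avoids introducing $P_\epsilon$ at the cost of the separate $PH(1)$ step. The only (cosmetic) caveat is the degenerate case $q=0$, where $\epsilon$ and $x_1$ are vacuous; there $P_\epsilon=H=H(1)$ and all three claims are trivial, so nothing is lost.
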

\begin{proof}
The property of being $s$-multiplicative is preserved under increasing unions. 
So, by Zorn's Lemma, every $s$-multiplicative semigroup 
is contained in a maximal one. 

\eqref{prop:max_mult2}
We know that $J_P^+\cap J_P^- = \emptyset$ 
by Lemma~\ref{lem:multiplicative_supports}.
Assume for a contradiction that $J_P^+ \cup J_P^-\not=\{1,\dots, q\}$. 
Choose $\ell\in \{1,\dots, q\}$ with $\ell\not\in J_P^+ \cup J_P^-$.
Then, by the definition of $J^\pm_P$, $\rho_\ell(x)=0$, and hence $xU_\ell x^{-1} =U_\ell$, for all $x\in P$.
Since $\ell\neq 0$, there is $y\in H\setminus P$ with $\rho_\ell(y)>0$ by  
Theorem~\ref{thm:flat group decomp}.

Let $x\in P$ be such that $\rho_j(x)>0$ for every $j\in J_P^+$ and $\rho_j(x)<0$ for every 
$j\in J_P^-$, whose existence is guaranteed by Lemma~\ref{lem:multiplicative_supports}. 
Since $\rho_j(yx^n) = \rho_j(y)+n\rho_j(x)$ for all $j\in\{1,\dots,q\}$ and all $n\geq0$, there exists $n>0$ such that 
\begin{equation}\label{eq:rhoj} 
\rho_j(yx^n)\geq0\quad \mbox{for every $j\in J_P^+$ and\quad $\rho_j(yx^n)\leq0$\quad for every $j\in J_P^-$.}
\end{equation}
To see that $yx^n\not\in P$, observe that $x^n\in P$ (since $P$ is a semigroup)
 and so $\rho_\ell(x^n)=0$. Hence $\rho_\ell(yx^n)=\rho_\ell(y) >0$, which implies that $yx^n\not\in P$.

 \smallskip\noindent
 \emph{Claim:\quad The  subsemigroup, $\tilde P$, of $H$ generated by $P\cup\{yx^n\}$
 is $s$-multiplicative.}

Note that part (i) follows from this claim: since $yx^n\not\in P$, the semigroup $\tilde P$
properly contains $P$, and the maximality of $P$ then implies that $\tilde P$ is not $s$-multiplicative.
Thus proof of the claim gives a contradiction, thereby proving part (i).
 
 \smallskip
We assert that, in order to prove Claim 1, it is sufficient to show that 
$J_{\tilde P}^+\cap J_{\tilde P}^- = \emptyset$.
For if this is true then equation~(\ref{eq:scale_formula}) implies that, for $z\in \tilde P$, 
$$
s(z) = \prod_{j\in J_{\tilde P}^+} s_j^{\rho_j(z)}
$$
from which it follows that the scale is multiplicative on $\tilde P$.
We therefore proceed to show that $J_{\tilde P}^+\cap J_{\tilde P}^- = \emptyset$.
It follows from \eqref{eq:rhoj}  that $J_P^+\subseteq J_{\tilde P}^+$ and $J_P^-\subseteq J_{\tilde P}^-$.
First we show that 
$$
J_{\tilde P}^+ = 
J_P^+ \cup \left\{ j\in\{1,\dots, q\}\setminus J_P^- \mid \rho_j(y)>0\right\}.
$$
If $j\not\in J_P^+\cup J_P^-$ and $\rho_j(y)>0$, then $\rho_j(x)=0$ (by the definition of $J^\pm_P$) 
and as $\rho_j$ is a homomorphism, $\rho_j(yx^n) = \rho_j(y)>0$. Thus $j\in J_{\tilde P}^+$ (by its definition),
so $J_{\tilde P}^+$ contains the right hand side.
Conversely, let $j\in J_{\tilde P}^+\setminus J_P^+$. Then, by the definition of $J_{\tilde P}^+$,
there exists $z\in\tilde P$ such that $\rho_j(z)>0$, and we note that 
$z\in \tilde P \setminus P$ since $j\not\in J^+_P$. Also by the definition of 
$J^+_P$, $\rho_j(x')\leq 0$ for all $x'\in P$. 
Now $z$ is a word in $P\cup\{yx^n\}$,
and the number $s$ of occurrences of $yx^n$ in this word is at least 1 since $z\not\in P$.
Since $\rho_j$ is a homomorphism and since  $\rho_j(x')\leq 0$ for all $x'\in P$,
it follows that $\rho_j(z) \leq s\rho_j(yx^n) \leq s\rho_j(y)$. Therefore, since both $s$ and 
$\rho_j(z)$ are positive, it follows that $\rho_j(y)>0$. Suppose finally, for a contradiction, that 
$j\in J_P^-$. Then by \eqref{eq:rhoj}, $\rho_j(yx^n)\leq 0$. Since also  
$\rho_j(x')\leq 0$ for all  $x'\in P$, it follows that $\rho_j(z')\leq 0$ for all $z'\in\tilde P$, contradicting the fact that
$\rho_j(z)>0$. 
Thus the equality for $J_{\tilde P}^+$ is proved.  A similar proof shows that 
$$
J_{\tilde P}^- = 
J_P^- \cup \left\{ j\in\{1,\dots, q\}\setminus J_P^+ \mid \rho_j(y)<0\right\}.
$$
Consider $J_{\tilde P}^+\cap J_{\tilde P}^-$. 
Since $\rho_j(y)$ can't be both strictly positive and strictly negative, 
$\left\{ j\in\{1,\dots, q\}\setminus J_P^- \mid \rho_j(y)>0\right\} \cap
\left\{ j\in\{1,\dots, q\}\setminus J_P^+ \mid \rho_j(y)<0\right\} =\emptyset$.
Since we have that  $J_P^+\cap J_P^- = \emptyset$ and 
the other two intersections that need to be considered are empty by definition, it follows that $J_{\tilde P}^+\cap J_{\tilde P}^- = \emptyset$ as claimed.

\eqref{prop:max_mult3} 
Since $H$ is a group and $P$ is a subsemigroup of $H$, $PP^{-1}\subseteq H$. 
We must show $H\subseteq PP^{-1}$.  
If $P$ is maximal and $y\in H$ then, by the argument in the proof of~\eqref{prop:max_mult2} 
(especially \eqref{eq:rhoj} and the Claim), 
there is $x\in P$ and $n\geq0$ such that $yx^n \in P$. 
Since $P$ is a semigroup, $x^n\in P$ and so $x^{-n}\in P^{-1}$. 
Hence $y=(yx^n)x^{-n}\in PP^{-1}$ and so $H=PP^{-1}$.

\eqref{prop:max_mult1}
We know that $P\cap P^{-1} \subseteq H(1)$ by Corollary~\ref{uniscalar intersection}.
To see that $P\cap P^{-1} = H(1)$,  it suffices to show that $H(1) \subseteq P\cap P^{-1}$.
We begin by proving that $H(1)\subseteq P$.
Note that the product $PH(1)$ is also a subsemigroup of $H$ 
because $H(1)$ is a normal subgroup of $H$. 
By~Lemma~\ref{lem:not_multiplicative}, $s$ is multiplicative on $PH(1)$.
If $H(1)\not\subseteq P$ then $PH(1)>P$, contradicting the maximality of $P$.
Hence $H(1)\subseteq P$. 
Since $H(1)$ is a group, it follows that $H(1)\subseteq P\cap P^{-1}$  as required.
\end{proof}

For future reference, the argument in the proof of Proposition~\ref{prop:max_mult} 
yields the following.

\begin{corollary}\label{cor:max_mult}
Let $P$ be an $s$-multiplicative subsemigroup  
of a finitely generated flat group $H$, and let $z\in H$ be such that $\rho_j(z)\geq0$ for all 
$j\in J_P^+$ and  $\rho_j(z)\leq0$ for all 
$j\in J_P^-$. Then the subsemigroup generated by $P\cup\{z\}$ is also $s$-multiplicative.
\end{corollary}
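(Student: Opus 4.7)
The plan is to mimic the argument used in the proof of Proposition~\ref{prop:max_mult}\eqref{prop:max_mult2}. Let $\tilde P$ denote the subsemigroup of $H$ generated by $P\cup\{z\}$. As observed in that proof, it is enough to establish that $J_{\tilde P}^+\cap J_{\tilde P}^- = \emptyset$, for then Equation~\eqref{eq:scale_formula} gives
\[
s(w) = \prod_{j\in J_{\tilde P}^+} s_j^{\rho_j(w)} \quad \text{for every } w\in \tilde P,
\]
and since each $\rho_j$ is a homomorphism on $H$, this makes $s$ multiplicative on $\tilde P$.

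To prove $J_{\tilde P}^+\cap J_{\tilde P}^- = \emptyset$, I will use the fact that Lemma~\ref{lem:multiplicative_supports}\eqref{opposite} applied to $P$ already gives a strong restriction: if $j\in J_P^+$ then $\rho_j(x)\geq0$ for every $x\in P$ (otherwise $j$ would also lie in $J_P^-$), and symmetrically if $j\in J_P^-$ then $\rho_j(x)\leq0$ for every $x\in P$; for $j\notin J_P^+\cup J_P^-$, we have $\rho_j(x)=0$ for all $x\in P$. Since each element of $\tilde P$ is a word in the alphabet $P\cup\{z\}$ and $\rho_j$ is a homomorphism, $\rho_j$ evaluated at such a word equals a sum of $\rho_j$-values on letters from $P$ plus a nonnegative multiple of $\rho_j(z)$.

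I will then split into three cases for each $j\in\{1,\ldots,q\}$:
\begin{enumerate}
\item If $j\in J_P^+$, then every letter contributes $\rho_j\geq0$ by the above, and $\rho_j(z)\geq0$ by hypothesis, so $\rho_j(w)\geq0$ for every $w\in\tilde P$; hence $j\notin J_{\tilde P}^-$.
\item If $j\in J_P^-$, symmetrically every letter contributes $\rho_j\leq0$ and $\rho_j(z)\leq0$, so $\rho_j(w)\leq0$ on $\tilde P$ and $j\notin J_{\tilde P}^+$.
\item If $j\notin J_P^+\cup J_P^-$, then letters from $P$ contribute $\rho_j=0$, so $\rho_j(w)$ is a nonnegative integer multiple of $\rho_j(z)$; this has a constant sign (or is zero) across $\tilde P$, so $j$ cannot lie in both $J_{\tilde P}^+$ and $J_{\tilde P}^-$.
\end{enumerate}
In every case $j$ fails to lie in $J_{\tilde P}^+\cap J_{\tilde P}^-$, which yields the claim and completes the proof.

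No step presents a real obstacle: the work is essentially bookkeeping made clean by the sign-control hypothesis on $z$, which is precisely the condition that blocks the cancellation phenomenon identified in Lemma~\ref{lem:not_multiplicative}. The only point that requires a moment's care is noting that $s$-multiplicativity of $P$ forces the one-sided sign of $\rho_j$ on all of $P$ (not merely the existence of a witness), which is what allows the three cases above to go through uniformly.
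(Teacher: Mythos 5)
Your proof is correct and follows essentially the same route as the paper: the paper derives this corollary directly from the argument in the proof of Proposition~\ref{prop:max_mult}, which likewise reduces the claim to showing $J_{\tilde P}^+\cap J_{\tilde P}^-=\emptyset$ via Equation~\eqref{eq:scale_formula} and then controls the signs of $\rho_j$ on words in $P\cup\{z\}$ using the homomorphism property. Your case analysis is a mild streamlining (you avoid computing $J_{\tilde P}^{\pm}$ exactly), but the key ideas are identical.
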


The following result shows that, if $U$ is tidy for $H$, then every maximal $s$-multiplicat\-ive 
subsemigroup of $H$ induces a decomposition of $U$.

\begin{proposition}
\label{prop:common_factor}
Suppose that $H\leq G$ is finitely generated and flat, and that the compact open subgroup $U$ is tidy for $H$. Let $P\subset H$ be a subsemigroup of $H$ that is $s$-multiplicative over $U$ and is maximal for that property. Then $U$ is the product of subgroups 
$$
U = U_+U_-,
$$
where  $xU_+x^{-1} \geq U_+$ and $xU_-x^{-1} \leq U_-$ for every $x\in P$. 
\end{proposition}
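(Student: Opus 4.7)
The plan is to reduce to a single-element tidy decomposition via an \emph{indicator} element of~$P$. By Proposition~\ref{prop:max_mult}(i), the index set $\{1,\dots,q\}$ of the decomposition in Theorem~\ref{thm:flat group decomp} is the disjoint union $J_P^+\sqcup J_P^-$, and Lemma~\ref{lem:multiplicative_supports}(ii) supplies an $x\in P$ with $\rho_j(x)>0$ for every $j\in J_P^+$ and $\rho_j(x)<0$ for every $j\in J_P^-$. Since $U$ is tidy for $H$, and hence for $x$, the natural candidates are the tidy factors
\[
U_+:=U_+^x=\bigcap_{k\geq 0}x^kUx^{-k}\quad\text{and}\quad U_-:=U_-^x=\bigcap_{k\geq 0}x^{-k}Ux^k
\]
of Definition~\ref{defn:tidy}. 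Condition~{\bf TA} of tidiness then yields the factorisation $U=U_+U_-$ immediately.

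Next I would establish that these factors are compatible with the components $U_0,U_1,\dots,U_q$ of~$U$. For the indicator $x$, if $j\in J_P^+\cup\{0\}$ then $x^{-k}U_jx^k\subseteq U_j\subseteq U$ for every $k\geq 0$, which places $U_j$ inside $U_+$; symmetrically $U_j\subseteq U_-$ for $j\in J_P^-\cup\{0\}$. Now fix an arbitrary $y\in P$. Because $J_P^+\cap J_P^-=\emptyset$, any $j\in J_P^+$ must have $\rho_j(y)\geq 0$ (otherwise $j\in J_P^-$, a contradiction), and symmetrically $\rho_j(y)\leq 0$ for $j\in J_P^-$. By Theorem~\ref{thm:flat group decomp} these sign constraints translate into $yU_jy^{-1}\supseteq U_j$ for every $j\in J_P^+\cup\{0\}$ and $yU_jy^{-1}\subseteq U_j$ for every $j\in J_P^-\cup\{0\}$.

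The heart of the proof, and its main obstacle, is to establish the structural identifications
\[
U_+=\bigl\langle U_j:j\in J_P^+\cup\{0\}\bigr\rangle\quad\text{and}\quad U_-=\bigl\langle U_j:j\in J_P^-\cup\{0\}\bigr\rangle,
\]
of which the inclusions ``$\supseteq$'' have already been obtained. For the reverse inclusions one decomposes $u\in U_+$ as $u=u_0u_1\cdots u_q$ via $U=U_0U_1\cdots U_q$ and argues that every component $u_j$ with $j\in J_P^-$ must be trivial: otherwise the properly ascending chain $\{x^{-k}U_jx^k\}_{k\geq 0}$ for $j\in J_P^-$ would force $x^{-k}ux^k$ eventually out of the compact subgroup~$U$, contradicting $u\in U_+^x$. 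Making this argument rigorous is delicate because the factorisation of $u$ is not unique and the $U_j$'s do not commute, so careful bookkeeping is required on how conjugation by $x^{-k}$ distorts each component; the closedness of each dilation group $\bigcup_{z\in H}zU_jz^{-1}$ from Lemma~\ref{details of decomp}(1) supplies the compactness needed to make this precise.

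Once the structural identifications are in place, the $P$-equivariance follows at once: conjugation by any $y\in P$ is a group automorphism, so
\[
yU_+y^{-1}=\bigl\langle yU_jy^{-1}:j\in J_P^+\cup\{0\}\bigr\rangle\supseteq\bigl\langle U_j:j\in J_P^+\cup\{0\}\bigr\rangle=U_+,
\]
using $yU_jy^{-1}\supseteq U_j$ for $j\in J_P^+\cup\{0\}$; the analogous argument with reversed inclusions gives $yU_-y^{-1}\subseteq U_-$, completing the proof.
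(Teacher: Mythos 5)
Your overall strategy is the same as the paper's: choose an indicator element $x\in P$ via Lemma~\ref{lem:multiplicative_supports}(\ref{indicator element}), take $U_\pm$ to be its tidy factors so that {\bf TA} gives $U=U_+U_-$, identify $U_\pm$ with the sub-products of the components $U_j$ of Theorem~\ref{thm:flat group decomp}, and read off the $P$-equivariance from $yU_jy^{-1}\geq U_j$ for $j\notin J_P^-$ (which you correctly deduce from $J_P^+\cap J_P^-=\emptyset$). The easy inclusion $U_j\subseteq U_+$ for $j\in J_P^+\cup\{0\}$, and the final conjugation computation, are fine and match the paper's proof, which writes $U_+=\prod\{U_j\mid j\notin J_P^-\}$ and $U_-=\prod\{U_j\mid j\notin J_P^+\}$.

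The difficulty is the step you yourself single out as the heart of the proof: the reverse inclusions identifying $U_\pm$ with the products of the non-contracted (resp.\ non-expanded) components. You only sketch this, and the sketch as stated would not go through: it is \emph{not} true that in a factorisation $u=u_0u_1\cdots u_q$ every component $u_j$ with $j\in J_P^-$ must be trivial, because the factorisation is highly non-unique --- for instance $U_0=\bigcap_{j=1}^q U_j$ sits inside every $U_j$, so an element of $U_+$ can perfectly well be written with nontrivial $J_P^-$-components. What you actually need is that every $u\in U_+$ admits \emph{some} factorisation supported on the components with $j\notin J_P^-$, and establishing that (with the attendant bookkeeping about how conjugation moves the pieces) is precisely the content of the flat-group machinery of \cite{Wi:SimulTriang} (Lemma~6.3 and Theorem~6.8 there) that Theorem~\ref{thm:flat group decomp} encapsulates and that the paper's proof simply invokes at this point. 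So either cite that result for the identification $U_+=\prod\{U_j\mid j\notin J_P^-\}$, or supply the argument you defer; as written, the central step is a genuine gap. (A minor remark: your use of Proposition~\ref{prop:max_mult}(i) to force $J_P^+\cup J_P^-=\{1,\dots,q\}$ is legitimate but unnecessary --- for any index in neither set the indicator $x$ normalises $U_j$, so those components can simply be absorbed into either factor, which is how the paper phrases it.)
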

\begin{proof}
By Lemma~\ref{lem:multiplicative_supports}, there is $y\in P$ such that $\rho_j(y)>0$ for every $j\in J_P^+$ and $\rho_j(y)<0$ for every $j\in J_P^-$. Put 
$$
U_+ = \bigcap_{n\geq0} y^nUy^{-n} 
\quad \text{ and } \quad  
U_- = \bigcap_{n\leq 0} y^nUy^{-n}.
$$
Then, noting that $yU_jy^{-1} = U_j$ if $j\not\in J_P^+\cup J_P^-$, it follows from Theorem~\ref{thm:flat group decomp} that, 
$$
U_+ = \prod\left\{ U_j \mid j\not\in J_P^-\right\} 
\quad \text{ and } \quad  
U_- = \prod\left\{ U_j \mid j\not\in J_P^+\right\}.
$$
Hence, by the definitions of $\rho_j$ and $J_P^-$, if $j\not\in J_P^-$ then $xU_jx^{-1} \geq U_j$ for all $x\in P$, so that 
$$
xU_+x^{-1} = \prod\left\{ xU_jx^{-1} \mid j\not \in J_P^-\right\} \geq U_+,
$$
and, similarly, $xU_-x^{-1}\leq U_-$.
\end{proof}

Note that, in general, not every subset of $\{1,\dots, q\}$ can occur as $J_P^+$ for some scale-multiplicative semigroup~$P$. 
Consider $G = \Aut(T)$ and $x\in G$ a translation on an infinite path, as in Example~\ref{ex:tree}. The group $H = \langle 
x\rangle$ is flat and any subgroup $U$ tidy for $H$ factors as $U = U_+U_-$. No subsemigroup of $H$ can be expanding on both factors.

Since each $\rho_j$ is a homomorphism, 
we can define a homomorphism $Q : H \to \mathbb{Z}^q$ (with $q$ as in Theorem~\ref{thm:flat group decomp}) by 
\begin{equation}
\label{eq:defineQ}
Q(x) = (\rho_j(x))_{j=1}^q.
\end{equation}
In light of \eqref{eq:scale_formula}, the kernel of $Q$ is $\ker Q = H(1)$,
the uniscalar subgroup of $H$, and so $Q$ induces 
an embedding of $H/H(1)$ into $\mathbb{Z}^q$, and by Theorem~\ref{N normal}, $H/H(1) \cong \mathbb{Z}^k$,
for some $k\leq q$.

Before proceeding we prove the following technical result about subsets of~$\NN^q$.

\begin{lemma}
\label{lem:order_on_Nq}
Denote by $\leq$ the component-wise partial order on $\NN^q$ 
whereby $(x_1,\ldots,x_q)\leq (y_1,\ldots, y_q)$ 
if and only if $x_i\leq y_i$ for each $i\in\{1,\ldots,q\}$.
Let $\mathcal{V}$ be an infinite subset of $\mathbb{N}^q$. 
Then there are $x,y\in \mathcal{V}$ with $x\leq y$.
\end{lemma}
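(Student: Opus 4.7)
The plan is to prove this by induction on $q$, in essence establishing the well-known fact that $(\mathbb{N}^q,\leq)$ is a well-quasi-order (Dickson's Lemma).

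For the base case $q=1$, any infinite subset $\mathcal{V}\subseteq \mathbb{N}$ is unbounded. Choose $x\in\mathcal{V}$ arbitrarily; since $\mathcal{V}$ is infinite there must exist $y\in\mathcal{V}$ with $y\geq x$, giving the conclusion.

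For the inductive step, I assume the statement holds for $q-1$, and let $\mathcal{V}\subseteq\mathbb{N}^q$ be infinite. Let $\pi\colon\mathbb{N}^q\to\mathbb{N}$ be projection onto the last coordinate, and split into two cases according to whether $\pi(\mathcal{V})$ is finite or infinite. If $\pi(\mathcal{V})$ is finite, then by the pigeonhole principle there exists a value $c\in\mathbb{N}$ such that $\mathcal{V}_c:=\{v\in\mathcal{V}\mid \pi(v)=c\}$ is infinite. The set $\mathcal{V}_c'$ obtained by dropping the last coordinate of each element is an infinite subset of $\mathbb{N}^{q-1}$, so by induction it contains $x',y'$ with $x'\leq y'$; appending $c$ to each yields the desired pair $x\leq y$ in $\mathcal{V}$. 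If instead $\pi(\mathcal{V})$ is infinite, I can choose an infinite sequence $v^{(1)},v^{(2)},\dots$ in $\mathcal{V}$ whose last coordinates $\pi(v^{(i)})$ are strictly increasing. Applying the inductive hypothesis to the (infinite) set of first $q-1$ coordinates of these vectors yields indices $i<j$ with $(v^{(i)}_1,\dots,v^{(i)}_{q-1})\leq (v^{(j)}_1,\dots,v^{(j)}_{q-1})$; combined with $\pi(v^{(i)})<\pi(v^{(j)})$ this gives $v^{(i)}\leq v^{(j)}$.

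I do not expect any serious obstacle here: the argument is entirely elementary and the only care needed is in extracting the strictly increasing subsequence in the second case, which follows immediately from the infinitude of $\pi(\mathcal{V})\subseteq\mathbb{N}$. The result could alternatively be deduced in one line from Dickson's Lemma, but the induction above keeps the paper self-contained.
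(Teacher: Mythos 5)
Your base case and your first case (where $\pi(\mathcal{V})$ is finite) are fine, and that first case is in the same spirit as the paper's own argument. The gap is in your second case. A minor point first: the parenthetical ``(infinite)'' is unjustified --- the set of truncations $(v^{(i)}_1,\dots,v^{(i)}_{q-1})$ may be finite (e.g.\ all equal); that situation is harmless, since a repeated truncation together with the strictly increasing last coordinates gives a comparable pair at once, but it needs to be said. The serious problem is the claim that the inductive hypothesis ``yields indices $i<j$'' with $(v^{(i)}_1,\dots,v^{(i)}_{q-1})\leq(v^{(j)}_1,\dots,v^{(j)}_{q-1})$. The hypothesis you assumed is the \emph{set} form of the lemma: it produces \emph{some} comparable pair of truncations, but it gives no control over which of the two comes from the earlier index. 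If the smaller truncation belongs to $v^{(j)}$ with $j>i$, then the last coordinates are ordered one way and the truncations the other way, and no comparable pair in $\mathcal{V}$ results. What your step actually uses is the stronger \emph{sequence} form of Dickson's Lemma in rank $q-1$ (every infinite sequence in $\mathbb{N}^{q-1}$ admits $i<j$ with $v^{(i)}\le v^{(j)}$), which is not the stated inductive hypothesis; for $q-1=1$ the two coincide up to the easy observation that $\mathbb{N}$ has no infinite strictly decreasing sequence, but for $q\ge 3$ the deduction is genuinely missing.

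The repair is standard but is a real change to the argument: either strengthen the statement you induct on to the sequence form (then in the inductive step extract a subsequence whose last coordinates are non-decreasing and apply the sequence-form hypothesis to the truncations --- this subsumes both of your cases), or argue as the paper does. The paper fixes $x\in\mathcal{V}$; if no $y\in\mathcal{V}$ satisfies $x\le y$, then each $y$ has some coordinate with $y_j<x_j$, so by pigeonhole there are a fixed coordinate $j$ and a fixed value $n<x_j$ with $y_j=n$ for infinitely many $y\in\mathcal{V}$; applying the set-form inductive hypothesis to those $y$ with coordinate $j$ deleted (they remain pairwise distinct, since they agree in coordinate $j$) produces a comparable pair that lifts back to $\mathbb{N}^q$. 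Pinning one coordinate to a \emph{common} value is exactly what avoids the orientation problem your case 2 runs into.
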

\begin{proof}
The proof is by induction on $q$ and the base case, when $q=1$, is clear because~$\mathbb{N}$ is well-ordered. Assume that the claim has been established for some value of $q$ and let $\mathcal{V}$ be an infinite subset of $\mathbb{N}^{q+1}$. Choose $x\in \mathcal{V}$. Then either there is $y\in \mathcal{V}$ with $x\leq y$, in which case we are done, or $x\not\leq y$ for every $y\in \mathcal{V}$. In the latter case, there is $j\in \{1,\dots, q+1\}$ such that $y_j\leq x_j$ for infinitely many $y\in \mathcal{V}$. Since there are only finitely many possible values less than $x_j$, it follows that there is $n< x_j$ such that $y_j = n$ for infinitely many $y\in \mathcal{V}$. Then, by the inductive hypothesis, there are $x',y'\in \mathcal{V}$ with $x'_j = y'_j = n$ and $x'_l\leq y'_l$ for every $l\ne j$. This $x'$ and $y'$ establish the claim for $q+1$ and the result follows by induction. 
\end{proof}
	 
We end the section with a result about generating sets for $Q(P)$, where~$Q$ is as defined in Equation~\eqref{eq:defineQ}.

\begin{proposition}
\label{prop:gen_set}
Let $H$ be a finitely generated flat group and
let $P$ be a maximal multiplicative subsemigroup of $H$.
Define subsemigroups
\begin{eqnarray*}
P_+ &=& \left\{x\in P \mid \rho_j(x)\geq0\hbox{ for all }j\in \{1,\dots, q\}\right\}\\
\hbox{ and }\ P_- &=& \left\{x\in P \mid \rho_j(x)\leq0\hbox{ for all }\ j\in \{1,\dots, q\}\right\}.
\end{eqnarray*}
Then there is a subsemigroup $P_0$ such that $P = P_+P_0P_-$ and each of $P_+\cap P_0$, $P_+\cap P_-$ and $P_0\cap P_-$ 
is equal to $H(1)$. Furthermore, $Q(P)$ is a finitely generated subsemigroup of $\mathbb{Z}^q$ that has a unique minimal generating set
$$
\Sigma = \Sigma_+ \cup \Sigma_0 \cup \Sigma_-,
$$ 
where $\Sigma_+$,  $\Sigma_0$ and $\Sigma_-$ are minimal generating sets of $Q(P_+)$, $Q(P_0)$ and $Q(P_-)$ respectively. 
\end{proposition}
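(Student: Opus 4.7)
The plan is to transfer the entire question to the image lattice $M := Q(H)\leq \ZZ^q$, where $Q$ is the homomorphism in~(\ref{eq:defineQ}), and to exploit the fact that $Q(P)$ sits inside a pointed cone. By Proposition~\ref{prop:max_mult}, $J_P^+\cup J_P^- = \{1,\dots,q\}$ with $J_P^+\cap J_P^- = \emptyset$, and $H(1)\subseteq P$; disjointness forces $\rho_j(x)\geq 0$ for every $x\in P$ and $j\in J_P^+$, and $\rho_j(x)\leq 0$ for $j\in J_P^-$. Let $C\subseteq \ZZ^q$ be the set of $v$ satisfying these sign conditions; then $Q(P)\subseteq M\cap C$. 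The reverse inclusion is supplied by Corollary~\ref{cor:max_mult}: any $z\in H$ with $Q(z)\in M\cap C$ satisfies the hypotheses of that corollary, so the semigroup generated by $P\cup\{z\}$ is still $s$-multiplicative, and maximality of $P$ forces $z\in P$. In these terms $Q(P_\pm)$ is the intersection of $Q(P)$ with the coordinate subspace on which all $J_P^\mp$-coordinates vanish.

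For finite generation I would invoke Lemma~\ref{lem:order_on_Nq}. The map $\widetilde Q\colon P\to \NN^q$ defined by $\widetilde Q(x)_j = |\rho_j(x)|$ factors through $Q$ and is injective on $Q(P)$; the set of componentwise-minimal elements of $\widetilde Q(P)\setminus\{0\}$ is finite by Lemma~\ref{lem:order_on_Nq}, giving a finite set $\Sigma\subseteq Q(P)\setminus\{0\}$. To see that $\Sigma$ generates $Q(P)$ as a monoid, note that whenever $u,u'\in Q(P)$ satisfy $\widetilde u\geq \widetilde{u'}$ componentwise we have $u-u'\in M\cap C=Q(P)$, so iterated subtraction of minimal elements terminates. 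Pointedness of $C$ gives $Q(P)\cap -Q(P)=\{0\}$, so $Q(P)$ has no non-trivial units; every irreducible element of $Q(P)\setminus\{0\}$ must appear in any generating set, so $\Sigma$ coincides with the set of irreducibles and is the unique minimal generating set.

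Now partition $\Sigma = \Sigma_+\cup\Sigma_0\cup\Sigma_-$ by setting $\Sigma_\pm := \Sigma\cap Q(P_\pm)$ and $\Sigma_0 := \Sigma\setminus(\Sigma_+\cup\Sigma_-)$. Any $v\in Q(P_+)$ has zero $J_P^-$-coordinates, and in a $\Sigma$-expansion $v=\sum\sigma_i$ the $j$-th coordinate for $j\in J_P^-$ is a sum of non-positive integers equal to zero, so each $\sigma_{i,j}$ vanishes for $j\in J_P^-$, forcing $\sigma_i\in\Sigma_+$. Hence $\Sigma_+$ generates $Q(P_+)$, and the irreducibility argument above shows it is the unique minimal such generating set; the case of $\Sigma_-$ is symmetric. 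Choose a preimage $z_\alpha\in P$ for each $\alpha\in\Sigma_0$ and let $P_0$ be the subsemigroup of $P$ generated by $H(1)\cup\{z_\alpha\mid\alpha\in\Sigma_0\}$; then $Q(P_0)$ is the submonoid of $Q(P)$ generated by $\Sigma_0$, with $\Sigma_0$ as its unique minimal generating set.

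The three intersection statements reduce to a single coordinate calculation: if $x\in P_+\cap P_0$ then $Q(x)\in Q(P_+)$ vanishes on $J_P^-$, while as a sum of elements of $\Sigma_0$ its $J_P^-$-coordinates are sums of non-positive integers in which each summand has at least one strictly negative entry, so the sum must be empty and $x\in\ker Q=H(1)$; the case $P_0\cap P_-=H(1)$ is symmetric, and $P_+\cap P_-=H(1)$ is immediate from the sign conditions. For the decomposition $P=P_+P_0P_-$, given $x\in P$ I would split its $\Sigma$-expansion by type as $Q(x)=v_++v_0+v_-$, lift to $y\in P_+$, $z\in P_0$, $w\in P_-$, and use that $x(yzw)^{-1}\in H(1)$ together with normality of $H(1)$ in $H$ from Theorem~\ref{N normal} to absorb the correction into the middle factor. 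The main technical hurdle is the Dickson/Gordan-style finite generation of $Q(P)$: the descent step $u\mapsto u-u'$ must stay inside $Q(P)$, which relies crucially on the identification $Q(P)=M\cap C$ afforded by Corollary~\ref{cor:max_mult}; once this combinatorial scaffolding is in place, the remainder of the proof is essentially bookkeeping.
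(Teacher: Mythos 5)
Your proposal is correct and follows essentially the same route as the paper: closure of $Q(P)$ under ``division'' via Corollary~\ref{cor:max_mult} and maximality, finiteness of the antichain of minimal elements via Lemma~\ref{lem:order_on_Nq}, identification of the minimal (irreducible) elements as the unique minimal generating set, and construction of $P_0$ from preimages of $\Sigma_0$ together with $H(1)$, using normality of $H(1)$ for the factorisation $P=P_+P_0P_-$. Your explicit reformulation $Q(P)=Q(H)\cap C$ and the coordinate-sum argument for the intersections are only minor repackagings of what the paper does implicitly.
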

\begin{proof}
Define a partial order on $Q(P)$ by 
$$
Q(x) \leq Q(y) \Leftrightarrow \rho_j(x) \leq \rho_j(y)\hbox{ if }j\in J_P^+\hbox{ and }\rho_j(x) \geq \rho_j(y)\hbox{ if }j\in J_P^-.
$$
(This is the same as the component-wise partial order on $\mathbb{N}^q$ 
if $\rho_j(x)$ is replaced by $-\rho_j(x)$ for $j\in J_P^-$.) 
Then for $x,y\in P$ we have
$$
Q(xy) = Q(x) + Q(y) \geq  Q(x)\vee Q(y).
$$
It follows that, if $Q(x)$ is minimal in $(Q(P),\leq)$, 
then $Q(x)$ must belong to any generating set for $Q(P)$. 

Conversely, suppose that $x, y\in P$ and $Q(x) \leq Q(y)$. Then $z = x^{-1}y\in H$, $\rho_j(x^{-1}y)\geq0$ 
if $j\in J_P^+$ and $\rho_j(x^{-1}y)\leq0$ if $j\in J_P^-$. 
Thus, by Corollary~\ref{cor:max_mult} and the maximality of $P$,  $z\in P$. Also $Q(z)\leq Q(y)$. 
If $Q(x)$ and $Q(z)$ are both minimal, 
then $Q(y) = Q(x)+Q(z)$ is the sum of minimal elements of $Q(P)$. 
Should they not both be minimal, 
they can be decomposed as the sum of still smaller elements. 
This must terminate at some point because $Q(P)\subset \mathbb{N}^q$ and, 
when it does, $Q(y)$ is expressed as the sum of finitely many minimal elements of $Q(P)$. 
Therefore the set of minimal elements in $(Q(P),\leq)$ is 
the unique smallest generating set for $Q(P)$, which we denote by $\Sigma$. 

That $\Sigma$ is finite follows from Lemma~\ref{lem:order_on_Nq}.

Similarly, $Q(P_+)$ is generated by the minimal elements in $(Q(P_+),\leq)$. 
Note that $Q(x)$ is minimal in $(Q(P_+),\leq)$ if and only if it is minimal in $(Q(P),\leq)$ 
and belongs to $Q(P_+)$. A similar statement holds for $Q(P_-)$. Hence 
$$
\Sigma = \Sigma_+ \cup \Sigma_0 \cup \Sigma_-,
$$ 
where $\Sigma_+$ and $\Sigma_-$ are the unique smallest generating sets for $Q(P_+)$ and $Q(P_-)$ respectively, and $\Sigma_0 = \Sigma \setminus (\Sigma_+\cup \Sigma_-)$ equals 
$$
 \left\{ Q(x) \in \Sigma \mid \exists j\in J_P^+,\ l\in J_P^-\hbox{ with }\rho_j(x)>0\hbox{ and }\rho_l(x)<0\right\}.
$$
Let $P_0$ be the subsemigroup of $P$ generated by $\left\{ x\in P \mid Q(x)\in \Sigma_0\right\} \cup H(1)$. Then
$$
P = P_+P_0P_-
$$
because $H$ is abelian modulo $H(1)$. 
Finally, since $H(1)$ is contained in $P_+$ and $P_-$, 
we have $P_+\cap P_-$, $P_+\cap P_0$ and $P_0\cap P_-$ all equal to $H(1)$
since they cannot be greater by Lemma~\ref{lem:not_multiplicative}. 
\end{proof}

\section{$P$-graphs}
\label{sec:P-graphs}

Scale methods tell us little about uniscalar elements. 
If $H$ is flat and $H(1)$ is the group of uniscalar elements of $H$, 
then, by Theorem~\ref{N normal}, $H/H(1)\cong \ZZ^k$, for some $k$, and, 
by Proposition~\ref{prop:max_mult}\eqref{prop:max_mult1}, $P\cap P^{-1} = H(1)$
whenever $P$ is a maximal $s$-multiplicative subsemigroup.
Later we shall assume that the flat group $H\cong \ZZ^k$, and 
that~$P$ is a subsemigroup of~$H$ such that $P\cap P^{-1}$ is trivial.
Under these conditions, the generating set~$\Sigma$ 
constructed in Proposition~\ref{prop:gen_set} is a generating set for~$P$
(since the kernel of~$Q$ is~$H(1)$).

\subsection{The general theory of $P$-graphs}

We begin with the definition of an arbitrary $P$-graph.
Our definition differs slightly from that given in~\cite{BSV}. 
In particular~\cite[Definition 2.1]{BSV} requires that $(H,P)$ be a 
\emph{quasi-lattice ordered group}, that is, 
under the partial order $x\leq y\Leftrightarrow x^{-1}y\in P$ on $H$, 
every pair of elements $x,y\in H$ with a common upper bound in $H$ 
has a least common upper bound. 
Not all semigroups arising here have this least upper bound property, 
see Example~\ref{ex:not_Nk}. 
Luckily, the quasi-lattice ordered condition is 
not an inherent requirement for the definition.
This reduces the r\^ole of $H$ in~\cite[Definition 2.1]{BSV}
to being an ambient group for $P$. 
Initially the only significance of this condition is that  
$P$ must therefore be cancellative.
In our examples, $H\cong \ZZ^k$ so that 
$P$ must necessarily be commutative.
However, we will not 
assume commutativity until it is necessary to do so.

As in~\cite[Definition 2.1]{BSV}, we include the maps $\dom$ and $\cod$ 
in the definition of a $P$-graph.
This is not strictly necessary, but will be efficient.

\begin{definition}
\label{defn:P-graph}
Let $P$ be a semigroup that embeds in a group and satisfies $P\cap P^{-1} = \triv$.
Then a \emph{$P$-graph}, $(\mathscr{L}, d)$ consists of a countable category 
$$
\mathscr{L} = (\Obj(\mathscr{L}), \Hom(\mathscr{L}),\cod,\dom)
$$ 
together with a map $d: \Hom(\mathscr{L}) \to P$, called the \emph{degree map}, 
which satisfies the \emph{factorization property}: 
for every $\lambda\in \Hom(\mathscr{L})$ and $x,y\in P$ with $d(\lambda) = xy$, 
there are unique elements $\lambda_1,\lambda_2\in \Hom(\mathscr{L})$ such that 
$\lambda = \lambda_1\lambda_2$ and $d(\lambda_1) = x,\ d(\lambda_2) = y$.

The identifying maps $\cod, \dom\colon \Hom(\mathscr{L}) \to \Obj(\mathscr{L})$ 
are such that, for each $\lambda\in \Hom(\mathscr{L})$, 
$\lambda\colon \dom(\lambda)\mapsto\cod(\lambda)$.
\end{definition}

In the definition we carefully refer to morphisms $\lambda$ as members of $\Hom(\mathscr{L})$. Occasionally we may simply say that $\lambda\in\mathscr{L}$.  Also we sometimes refer to the $P$-graph  $(\mathscr{L}, d)$ simply as  $\mathscr{L}$ if the degree map $d$ is clear from the context.

\begin{example}
When 
$P = \mathbb{N}$, 
$\mathbb{N}$-graphs correspond to directed graphs. 

Let $X=(V,E)$ be a directed graph with vertex set $V$ and edge set $E$. 
The corresponding category $\mathscr{X}$ has $\Obj(\mathscr{X}) = V(X)$ and 
$\Hom(\mathscr{X})$ equal to the set of 
all finite directed paths (concatenations of edges) in $X$. 
Define the degree $d(\lambda)$ of a path $\lambda$ 
to be the length (number of edges) of the path. 
The factorization property is simply the statement that, 
if $\lambda$ is a path of length $n$ and $n = n_1 + n_2$, 
then $\lambda$ is the concatenation of two subpaths 
$\lambda_1$ of length $n_1$ and $\lambda_2$ of length $n_2$ in $\Hom(\mathscr{X})$.
Because of the applications to operator algebras, traditionally 
$\lambda_1$ is the final $n_1$ edges in $\lambda$, 
and $\lambda_2$, the first $n_2$ edges in $\lambda$.

Conversely, if $\mathscr{X}$ is an $\mathbb{N}$-graph, 
then a directed graph $X$ may be defined by taking 
$V(X)= \Obj(\mathscr{X})$ and 
$E(X)$ to be the morphisms of degree~$1$ in $\mathscr{X}$. 
\end{example}

Every $k$-graph in the sense defined in \cite{KumPask} is an $\mathbb{N}^k$-graph in the sense just defined. Particular examples of $\mathbb{N}^k$-graphs may be formed as products of $\mathbb{N}$-graphs through the product construction described below in Section~\ref{sec:ProductsofPgraphs}. Section~\ref{sec:examples} contains examples of $P$-graphs where $P$ is a subsemigroup of $\mathbb{Z}^k$ not isomorphic to $\mathbb{N}^k$. 

In using $P$-graphs to help characterise $s$-multiplicative semigroups, 
we will need some conditions analogous to those used by M\"oller for the single-element case.

\begin{definition}
\label{defn:regular} Let $(\mathscr{L},d)$ be a  $P$-graph, with $P$ being finitely generated with generating set $\Sigma = \{x_1,\dots, x_n\}$. 
\begin{enumerate}
\item 
The \emph{$1$-skeleton} of  $(\mathscr{L},d)$ with respect to $\Sigma$ 
is the directed graph $L$ with \\
$V(L) = \hbox{Obj}(\mathscr{L})$ and 
$E(L) = \left\{ \lambda\in \hbox{Hom}(\mathscr{L}) \mid d(\lambda)\in \Sigma\right\}$.

\item 
$(\mathscr{L},d)$ is \emph{acyclic} if  
$\lambda\in \hbox{Hom}(\mathscr{L})$ with  
$\hbox{dom}(\lambda) = \hbox{cod}(\lambda)$ 
implies $d(\lambda)=1$, where $1$ denotes the identity in~$P$.

\item For each $\alpha\in \hbox{Obj}(\mathscr{L})$ the \emph{descendant $P$-graph} $\mathscr{L}^\alpha$ has 
\begin{eqnarray*}
\hbox{Obj}(\mathscr{L}^\alpha) &=&  \left\{ \beta\in \hbox{Obj}(\mathscr{L})\mid \exists \lambda\in \hbox{Hom}(\mathscr{L}) \hbox{ with }\lambda : \alpha\mapsto\beta\right\}\\
\hbox{ and } \hbox{Hom}(\mathscr{L}^\alpha) &=& \left\{ \lambda\in \hbox{Hom}(\mathscr{L}) \mid \hbox{dom}(\lambda), \hbox{cod}(\lambda)\in \hbox{Obj}(\mathscr{L}^\alpha)\right\},
\end{eqnarray*}
and the degree map is the restriction of $d$ to $\Hom(\mathscr{L}^\alpha)$.

\item An object $\alpha$ with $\mathscr{L}^\alpha = \mathscr{L}$ is called a \emph{generator} for $\mathscr{L}$; equivalently, $\alpha$ is a generator 
if, for every $\beta\in \hbox{Obj}(\mathscr{L})$, there is a morphism $\lambda : \alpha\mapsto \beta$. 
If the generator is unique it is called the \emph{root} of $\mathscr{L}$, 
and $\mathscr{L}$ is said to be \emph{rooted}.

\item $(\mathscr{L},d)$ is \emph{strongly simple} if there is at most one morphism $\lambda : \alpha\mapsto \beta$ 
for any $\alpha,\beta\in \hbox{Obj}(\mathscr{L})$. 

\item $(\mathscr{L},d)$ is \emph{regular} if for every $\alpha,\beta\in \hbox{Obj}(\mathscr{L})$ 
there is an isomorphism $\phi : \mathscr{L}^\alpha\to \mathscr{L}^\beta$. 

\item The cardinality $n=|\Sigma|$ of the generating set is called the \emph{rank} of $(\mathscr{L},d)$. 
\end{enumerate}
\end{definition}

\begin{remark}
If $P$ has only one generator then every $P$-graph has rank $1$ and is a directed graph.
If $P$ has more than one generator then 
a $P$-graph is an inherently higher-rank object; 
the degree map and the factorisation property determine the higher-rank cells.
The case of a singly-generated semigroup 
considered by M\"oller in~\cite{struc(tdlcG-graphs+permutations)}
corresponds to an $\NN$-graph, hence is equal to its $1$-skeleton.
\end{remark}

\begin{remark}
Strong simplicity of a $P$-graph is 
equivalent to simplicity of the graph obtained by 
defining \emph{every} morphism to be an edge. 
Strong simplicity of a $P$-graph implies it is acyclic. 

A regular (downward directed) rooted tree is 
a rooted, regular and strongly simple $\mathbb{N}$-graph. 
If every edge in the tree is doubled, 
then the $\mathbb{N}$-graph (directed graph) obtained is acyclic, rooted and regular but not strongly simple. Hence strong simplicity of a $P$-graph is not implied by the factorization property and acyclicity. 

Consider the directed lattice $\mathbb{N}^2$, that is, 
the Cayley graph of the semigroup $\mathbb{N}^2$ for the generators $(1,0)$ and $(0,1)$. 
This can be considered as both an $\mathbb{N}$-graph, 
in which case the degree map takes values in $\mathbb{N}$, 
and as an $\mathbb{N}^2$-graph, 
in which case the degree map takes values in $\mathbb{N}^2$.
As an $\mathbb{N}$-graph it is acyclic, rooted, and regular, but not strongly simple. 
However as an $\mathbb{N}^2$-graph it is rooted, strongly simple and regular. 
To see the difference in strong simplicity between the two cases consider the objects $(0,0)$ and $(1,1)$ and the possible morphisms $(0,0)\to(1,1)$. 
In both cases we can decompose the morphism $(0,0)\to(1,1)$ as either $(0,0)\to(1,0)\to(1,1)$ or $(0,0)\to(0,1)\to(1,1)$.
As an $\mathbb{N}$-graph these are both compositions of morphisms of degree $1$, the generator of $\mathbb{N}$. 
Hence for the factorization property to hold, the compositions $(0,0)\to(1,0)\to(1,1)$ and $(0,0)\to(0,1)\to(1,1)$ must correspond to two different morphisms $(0,0)\to(1,1)$ and the $\mathbb{N}$-graph is not strongly simple. 
It is, however, acyclic as there are no directed loops in the graph.
As an $\mathbb{N}^2$-graph, $(0,0)\to(1,0)\to(1,1)$ is 
a composition of morphisms first of degree $(1,0)$ then of degree $(0,1)$, 
whereas $(0,0)\to(0,1)\to(1,1)$ is a composition of morphisms first of degree $(0,1)$ then of degree $(1,0)$. 
These can therefore be identified without violating the factorization property to give a unique morphism $(0,0)\to(1,1)$. 
As an $\mathbb{N}^2$-graph it is possible to identify all directed paths from one object to another without violating the factorization property, and the $\mathbb{N}^2$-graph is strongly simple.
There is a geometric perspective on this distinction. 
Geometrically, viewing $\mathbb{N}^2$ as an $\mathbb{N}^2$-graph corresponds to thinking of $\mathbb{N}^2$ as a rank~1 cubical complex, whereas as an $\mathbb{N}$-graph we consider only the 1-skeleton as a directed graph.

More generally, any product of $k$ regular rooted trees is 
an acyclic, rooted, strongly simple and regular $\mathbb{N}^k$-graph by the construction in Section~\ref{sec:ProductsofPgraphs}.
\end{remark}

\begin{remark}
\label{characterisation of rooted, strongly-simple}
A $P$-graph $(\mathscr{L},d)$ is both rooted and strongly-simple if and only if 
there exists a unique $\alpha\in\Obj(\mathscr{L})$ such that for every $\beta\in\Obj(\mathscr{L})$ 
there exists a unique $\lambda\in\Hom(\mathscr{L})$ with $\lambda\colon \alpha\mapsto\beta$.
\end{remark}

\begin{proposition}
\label{prop:regular_P-graph}
Let $P$ be a finitely generated semigroup that embeds in a group and satisfies $P\cap P^{-1} = \triv$. Let $\Sigma$ be a generating set for $P$. 
Suppose that $(\mathscr{L},d)$ is a rooted, regular, strongly simple $P$-graph and  
denote the root of $\mathscr{L}$ by $\nu_0$. 
\begin{enumerate}
\item For each $\alpha\in \hbox{\rm Obj}(\mathscr{L})$ there is 
a unique morphism $\lambda :\nu_0 \to \alpha$.  
Define the \emph{level} of $\alpha$ to be $\ell(\alpha) = d(\lambda)$.
\label{prop:regular_P-graph1}
\item If $x,y\in d(\mathscr{L})$, then $xy\in d(\mathscr{L})$.
\label{prop:regular_P-graph2}
\item There is a subset $\Sigma'$ of $\Sigma$ such that 
$d(\mathscr{L})$ is the subsemigroup of $P$ generated by $\Sigma'$. 
If $\Sigma' = \emptyset$, then $\mathscr{L} = \{\nu_0\}$. 
Otherwise, $\mathscr{L}$ is infinite. 
\label{prop:regular_P-graph3}
\end{enumerate}
\end{proposition}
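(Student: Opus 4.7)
My plan is to treat the three parts in order: part~(1) follows immediately from Remark~\ref{characterisation of rooted, strongly-simple}, while parts~(2) and~(3) both rest on a key intermediate claim---that each descendant $\mathscr{L}^\alpha$ is itself a rooted $P$-graph with root $\alpha$---which I would establish before tackling them.

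For~(1), I would apply the cited remark to $\nu_0$, obtaining for each $\alpha\in\Obj(\mathscr{L})$ a unique morphism $\lambda_\alpha:\nu_0\to\alpha$, and then define $\ell(\alpha):=d(\lambda_\alpha)$.

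For the intermediate claim, strong simplicity of $\mathscr{L}^\alpha$ is inherited from $\mathscr{L}$, and $\alpha$ is clearly a generator of $\mathscr{L}^\alpha$. To prove uniqueness of the generator, I would suppose $\alpha'$ were a second generator, so that morphisms $\mu:\alpha\to\alpha'$ and $\mu':\alpha'\to\alpha$ both exist. The compositions $\mu'\mu$ and $\mu\mu'$ are forced by strong simplicity to be the identity morphisms at $\alpha$ and $\alpha'$; since the only idempotent in a semigroup embedding in a group is $1$, this gives $d(\mathrm{id}_\alpha)=d(\mathrm{id}_{\alpha'})=1$, and the assumption $P\cap P^{-1}=\triv$ then forces $d(\mu)=d(\mu')=1$. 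Applying the factorization property to $\mu$ with $1=1\cdot 1$, the two valid factorizations $\mu=\mu\cdot\mathrm{id}_\alpha=\mathrm{id}_{\alpha'}\cdot\mu$ must coincide by uniqueness, giving $\mu=\mathrm{id}_\alpha=\mathrm{id}_{\alpha'}$ and hence $\alpha=\alpha'$.

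Part~(2) then proceeds as follows: given $x,y\in d(\mathscr{L})$, I would use~(1) to produce $\eta:\nu_0\to\beta$ of degree $y$. By regularity, $d(\mathscr{L}^\beta)=d(\mathscr{L})$ contains $x$, and applying~(1) to the rooted $P$-graph $\mathscr{L}^\beta$ yields a morphism $\mu:\beta\to\gamma$ of degree $x$, so $\mu\eta:\nu_0\to\gamma$ has degree $xy$. For part~(3) I would set $\Sigma':=\Sigma\cap d(\mathscr{L})$: the inclusion $\langle\Sigma'\rangle\subseteq d(\mathscr{L})$ is immediate from~(2), and the reverse inclusion follows by writing any $x\in d(\mathscr{L})\setminus\{1\}$ as a product of generators in $\Sigma$ and iteratively applying the factorization property to a morphism of degree $x$ to obtain sub-morphisms whose degrees all lie in $\Sigma\cap d(\mathscr{L})=\Sigma'$. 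If $\Sigma'=\emptyset$ then $d(\mathscr{L})\subseteq\triv$, so every level equals $1$, and the same double-factorization trick from the intermediate claim forces every object of $\mathscr{L}$ to equal $\nu_0$. If instead $\Sigma'\neq\emptyset$, any $y\in\Sigma'$ satisfies $y\neq 1$, and since $P\cap P^{-1}=\triv$ in the ambient group, the powers $y^n$ are pairwise distinct elements of $d(\mathscr{L})$ by~(2), producing infinitely many distinct levels and hence infinitely many objects via~(1).

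The main obstacle is the intermediate claim that $\mathscr{L}^\alpha$ is rooted with root $\alpha$: uniqueness of the generator requires a genuine interplay of strong simplicity, the factorization property, and the assumption $P\cap P^{-1}=\triv$, and the same double-factorization trick reappears in handling the $\Sigma'=\emptyset$ case of part~(3). Once the intermediate claim is in place, parts~(2) and~(3) reduce to routine applications of~(1) and the factorization property.
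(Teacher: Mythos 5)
Your proposal is correct and follows essentially the same route as the paper: rootedness plus strong simplicity for part~(1), regularity used to transplant and compose morphisms for part~(2), and the factorization property to break degrees into generators for part~(3). Your intermediate lemma that each $\mathscr{L}^\alpha$ is rooted at $\alpha$ (via the degree-one/double-factorization trick) supplies a justification for a step the paper merely asserts---that the regularity isomorphism matches up the base objects---and your explicit arguments for the $\Sigma'=\emptyset$ case and for infiniteness fill in details the paper leaves implicit.
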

\begin{proof}
(\ref{prop:regular_P-graph1}) 
Existence of $\lambda$ follows because $\nu_0$ is the root and 
uniqueness because $\mathscr{L}$ is strongly simple.

(\ref{prop:regular_P-graph2}) 
Let $\lambda : \alpha\to\beta$ be such that $d(\lambda) = x$ and 
$\mu : \gamma\to\delta$ be such that $d(\mu) = y$. 
Then, since $\mathscr{L}$ is regular, 
there is an isomorphism $\phi : \mathscr{L}^\gamma \to \mathscr{L}^\beta$. 
In particular, $\phi(\gamma) = \beta$ and the morphism $\phi(\mu) : \beta\to \phi(\delta)$. Hence the composite morphism $\lambda\phi(\mu)$ satisfies 
$$
d(\lambda\phi(\mu)) = d(\lambda)d(\phi(\mu)) = xy.
$$ 

(\ref{prop:regular_P-graph3})  
If $\mathscr{L} = \{\nu_0\}$,  then $d(\mathscr{L}) = \{1\}$ and $\Sigma' = \emptyset$. 
Otherwise, there is an object $\alpha\ne \nu_0$ and 
$\lambda : \nu_0\to\alpha$ with $x = d(\lambda) \ne 1$. 
Then we have that $x = x_1\dots x_l$ with each $x_i\in \Sigma$ and, by the factorization property, for each $x_i$ there is $\alpha_i\in \hbox{Obj}(\mathscr{L})$ with $\ell(\alpha_i) = x_i$. 
Repeated application of (\ref{prop:regular_P-graph2}) then implies that, 
for every $y$ in the semigroup generated by $\{x_1,\dots,x_l\}$, 
there is $\beta\in \hbox{Obj}(\mathscr{L})$ with $\ell(\beta) = y$. 
Since this holds for every $x$ in $d(\mathscr{L})$, 
it follows that the latter is generated by a subset of $\Sigma$.
\end{proof}

\begin{remark}
Strong simplicity of $\mathscr{L}$ is required in order for 
the `level' of objects to be well-defined. 
For example, let $N$ be the directed graph that has 
$V(N) = \mathbb{N}$ and $E(N) = \{ (n,n+1), (n,n+2) \mid n\in \mathbb{N}\}$. 
Then $N$ is a rooted, regular acyclic $\mathbb{N}$-graph but 
all vertices except $0$ and $1$ may be reached from the root 
by paths of different lengths. 
\end{remark}

\begin{proposition}
\label{prop:regular_P-graph4}
Let $P$ be a commutative semigroup that embeds in a group 
and satisfies $P\cap P^{-1} = \triv$.
Let $(\mathscr{L},d)$ be a rooted, regular, strongly simple $P$-graph  
with root~$\nu_0$. 
Define
$$
V_x = \left\{ \alpha\in \hbox{\rm Obj}(\mathscr{L}) \mid \ell(\alpha) = x \right\}
$$
and put $\mathfrak{s}_i = |V_{x_i}|$ for each $x_i\in \Sigma$. 
Then, for each $x = \prod_{i=1}^n x_i^{m_i}$ in $P$,  
$|V_x| = \prod_{i=1}^n \mathfrak{s}_i^{m_i}$. 
Note this product is therefore independent of 
how $x$ is expressed as a product of generators from $\Sigma$. 
\end{proposition}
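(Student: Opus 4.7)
The plan is to count $|V_x|$ by invoking Remark~\ref{characterisation of rooted, strongly-simple} and Proposition~\ref{prop:regular_P-graph}(\ref{prop:regular_P-graph1}), which together give a bijection between $V_x$ and the set of morphisms $\lambda \in \Hom(\mathscr{L})$ with $\dom(\lambda) = \nu_0$ and $d(\lambda) = x$, and then to enumerate such morphisms by iteratively applying the factorisation property of Definition~\ref{defn:P-graph} along a chosen word-factorisation of~$x$.

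Fix any factorisation $x = y_1 y_2 \cdots y_k$ with each $y_j \in \Sigma$; commutativity of $P$ ensures one may choose this word so that each $x_i$ appears exactly $m_i$ times, whence $k = \sum_i m_i$. Repeated application of the factorisation property yields a bijection between morphisms $\lambda \colon \nu_0 \to \alpha$ of degree $x$ and composable chains
\[
\nu_0 = \beta_0 \xrightarrow{\lambda_1} \beta_1 \xrightarrow{\lambda_2} \cdots \xrightarrow{\lambda_k} \beta_k = \alpha, \qquad d(\lambda_j) = y_j.
\]
Thus $|V_x|$ equals the number of such chains, which I would compute by counting, at each stage $j$, the number of admissible choices for $\lambda_j$ once $\beta_{j-1}$ has been chosen.

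I claim the stage-$j$ count equals $\mathfrak{s}_{y_j}$. By regularity, there is an isomorphism $\phi_{j-1} \colon \mathscr{L}^{\beta_{j-1}} \to \mathscr{L}^{\nu_0} = \mathscr{L}$. Since strong simplicity of $\mathscr{L}^{\beta_{j-1}}$ is inherited from $\mathscr{L}$ (all morphisms involved lie in $\Hom(\mathscr{L})$), and since any generator of $\mathscr{L}^{\beta_{j-1}}$ is carried by $\phi_{j-1}$ to a generator of~$\mathscr{L}$, which is the unique object~$\nu_0$, one deduces that $\beta_{j-1}$ is the unique generator of $\mathscr{L}^{\beta_{j-1}}$ and $\phi_{j-1}(\beta_{j-1}) = \nu_0$. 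Every morphism of degree $y_j$ out of $\beta_{j-1}$ in $\mathscr{L}$ already lies in $\Hom(\mathscr{L}^{\beta_{j-1}})$, so under $\phi_{j-1}$ such morphisms correspond bijectively to morphisms of degree $y_j$ out of $\nu_0$ in $\mathscr{L}$; by the rooted--strongly-simple bijection at the root, the number of these is $|V_{y_j}| = \mathfrak{s}_{y_j}$. Multiplying over $j$ gives $|V_x| = \prod_{j=1}^k \mathfrak{s}_{y_j} = \prod_{i=1}^n \mathfrak{s}_i^{m_i}$.

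The main technical point is establishing that each descendant $\mathscr{L}^{\beta_{j-1}}$ is itself rooted, with $\beta_{j-1}$ as its root and remains strongly simple; without this one cannot transport the stage-$j$ count to the root-based count $\mathfrak{s}_{y_j}$ via regularity. Once this transport is secured, the independence of $\prod_i \mathfrak{s}_i^{m_i}$ from the particular expression chosen for~$x$ follows automatically, since $|V_x|$ depends only on~$x$.
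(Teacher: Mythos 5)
Your proof is correct and uses essentially the same ingredients as the paper's: the factorisation property, regularity to transport the count of degree-$y_j$ morphisms at an arbitrary object back to the root, and rootedness plus strong simplicity to identify $V_{y}$ with the set of morphisms of degree $y$ out of $\nu_0$. The paper merely organises this as an induction on $\sum_i m_i$ (lower bound from the disjoint images $\phi_\alpha(V_{x_j})$, upper bound from factorisation), while you count composable chains directly; the difference is organisational rather than mathematical.
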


\begin{proof}
The set $V_{x_i}$ is not empty and $\mathfrak{s}_i\geq1$ if and only if $x_i\in \Sigma'$. 
The calculation of the cardinality of $V_x$ will be by induction on 
$m_1+\dots+m_n$ where $x = \prod_{i=1}^n x_i^{m_i}$. 
In the case when this sum is~$1$, $x = x_i$ for some $i$ and 
$|V_{x}| = \mathfrak{s}_i$ by definition. (We adopt the convention that $0^0=1$.)
Suppose that it has been shown for $x$ that 
$|V_x| = \prod_{i=1}^n \mathfrak{s}_i^{m_i}$ and consider $V_{xx_j}$. 
For each $\alpha\in V_x$, there is an 
isomorphism $\phi_\alpha : \mathscr{L} \to \mathscr{L}^\alpha$ 
(where we identify $\mathscr{L}^{\nu_0}$ with $\mathscr{L}$) and 
$\phi_\alpha$ injects $V_{x_j}$ into $V_{xx_j}$. 
Moreover, $\phi_\alpha(V_{x_j}) \cap \phi_\beta(V_{x_j})$ is 
empty if $\beta\ne \alpha$ because $\mathscr{L}$ is simple. 
Hence 
\begin{equation}
\label{eq:|V_x|}
|V_{xx_j}| \geq  |V_x|\mathfrak{s}_j = (\prod_{i=1}^n \mathfrak{s}_i^{m_i}) \mathfrak{s}_j.
\end{equation}
On the other hand, if $\beta\in V_{xx_j}$, there is 
$\lambda : \nu_0 \to \beta$ with $d(\lambda) = xx_j$. 
Then the factorization property implies that 
there are morphisms $\lambda_1$ with $d(\lambda_1) = x$ and 
$\lambda_2$ with $d(\lambda_2) = x_j$ such that $\lambda = \lambda_1\lambda_2$. 
Since $\hbox{cod}(\lambda_1) := \alpha$ belongs to $V_x$, 
it follows that $\beta \in \phi_\alpha(V_{x_j})$, and 
the inequality in (\ref{eq:|V_x|}) is an equality. 
\end{proof}

It follows from Propositions~\ref{prop:regular_P-graph} 
and~\ref{prop:regular_P-graph4} that 
a regular, rooted, strongly simple $\mathbb{N}$-graph is 
a regular, rooted tree in which every vertex has $\mathfrak{s}_1$ children.

\subsection{Products of $P$-graphs}
\label{sec:ProductsofPgraphs}

We will define the product of a family of general $P$-graphs.
We are particularly interested in analogues of products of trees.
We will therefore investigate the products of rooted, strongly-simple $P$-graphs in some detail.

\begin{definition}
\label{product of P-graphs}
Suppose $\{P_i\}_{i=1}^k$ is a family of $k$ semigroups and 
$\mathscr{L}_i$ is a $P_i$-graph with degree map $d_i$ for each $i$.
The \emph{(external) product graph}  $\mathscr{L}=\mathscr{L}_1\times\cdots\times\mathscr{L}_k$ 
is the product category consisting of 
\begin{eqnarray*}
\Obj(\mathscr{L}_1\times\cdots\times\mathscr{L}_k) 
& = & \Obj(\mathscr{L}_1)\times\cdots\times \Obj(\mathscr{L}_k) \\
\Hom(\mathscr{L}_1\times\cdots\times\mathscr{L}_k)
& = & \Hom(\mathscr{L}_1)\times\cdots\times\Hom(\mathscr{L}_k)
\end{eqnarray*}
with morphisms composed coordinatewise and 
with degree map 
$$
d\colon\mathscr{L}_1\times\cdots\times\mathscr{L}_k \to P_1\times\cdots\times P_k
$$ 
given by
$
d((\lambda_1,\ldots,\lambda_k))=(d_1(\lambda_1),\ldots, d_k(\lambda_k)).
$
\end{definition}

The first thing we need to know is that this construction yields a $P$-graph.

\begin{lemma}
\label{lem: product of P-graphs a P-graph}
Suppose $\{P_i\}$ is a family of $k$ semigroups and, for each $i$, $(\mathscr{L}_i, d_i)$ is a $P_i$-graph.
Then the product graph  $\mathscr{L}_1\times\cdots\times\mathscr{L}_k$ 
is a $P$-graph for the semigroup $P=P_1\times\cdots\times P_k$, with degree $d$ as in Definition~\ref{product of P-graphs}.
\end{lemma}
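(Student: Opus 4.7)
The plan is to verify the two conditions of Definition~\ref{defn:P-graph} coordinatewise, since both the categorical structure and the degree map on the product have been set up in a coordinatewise manner.

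First I would check the hypotheses on the indexing semigroup $P = P_1 \times \cdots \times P_k$. Each $P_i$ embeds into a group $G_i$ with $P_i \cap P_i^{-1} = \{1\}$, so $P$ embeds into the product group $G_1 \times \cdots \times G_k$, and $P \cap P^{-1} = \{1\}$ holds componentwise. Countability of $\Obj(\mathscr{L})$ and $\Hom(\mathscr{L})$ follows from the finite product of countable sets being countable. Functoriality of $\cod$ and $\dom$ on the product category is immediate from coordinatewise composition, since a pair $(\lambda_1,\ldots,\lambda_k)$ and $(\mu_1,\ldots,\mu_k)$ is composable in $\mathscr{L}$ exactly when each pair $(\lambda_i,\mu_i)$ is composable in $\mathscr{L}_i$.

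Next I would verify that $d$ is well-defined on $\Hom(\mathscr{L})$ and respects composition, which reduces to the corresponding statements for each $d_i$ combined with the componentwise multiplication in $P$. The core of the proof is then the factorization property. Suppose $\lambda = (\lambda_1,\ldots,\lambda_k) \in \Hom(\mathscr{L})$ with $d(\lambda) = xy$ for $x=(x_1,\ldots,x_k)$, $y=(y_1,\ldots,y_k) \in P$. Then $d_i(\lambda_i) = x_i y_i$ in $P_i$ for every $i$, and applying the factorization property in each $\mathscr{L}_i$ yields unique $\lambda_i^{(1)}, \lambda_i^{(2)} \in \Hom(\mathscr{L}_i)$ with $\lambda_i = \lambda_i^{(1)} \lambda_i^{(2)}$, $d_i(\lambda_i^{(1)}) = x_i$ and $d_i(\lambda_i^{(2)}) = y_i$. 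Setting
\[
\mu_1 = (\lambda_1^{(1)},\ldots,\lambda_k^{(1)}), \qquad \mu_2 = (\lambda_1^{(2)},\ldots,\lambda_k^{(2)}),
\]
gives a factorization $\lambda = \mu_1 \mu_2$ with $d(\mu_1) = x$ and $d(\mu_2) = y$.

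The only real point requiring care is the \emph{uniqueness} assertion in the factorization property. Here I would argue that any factorization $\lambda = \mu_1' \mu_2'$ in $\mathscr{L}$ with $d(\mu_1') = x$, $d(\mu_2') = y$ must project coordinatewise to a factorization of each $\lambda_i$ as $\mu_{1,i}'\mu_{2,i}'$ with $d_i(\mu_{1,i}')= x_i$ and $d_i(\mu_{2,i}') = y_i$; uniqueness in each $\mathscr{L}_i$ then forces $\mu_{j,i}' = \lambda_i^{(j)}$ for all $i,j$, giving $(\mu_1',\mu_2') = (\mu_1,\mu_2)$. I do not anticipate a genuine obstacle; the entire argument is a routine componentwise verification, and the only subtlety is making the uniqueness bookkeeping explicit.
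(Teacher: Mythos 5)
Your proposal is correct and follows essentially the same route as the paper: the paper also proves the factorisation property by applying existence and uniqueness of factorisations coordinatewise and then observing that any competing factorisation in the product projects to factorisations in each $\mathscr{L}_i$, forcing equality. Your additional remarks on $P\cap P^{-1}=\triv$, countability and composability are harmless bookkeeping that the paper leaves implicit.
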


\begin{proof}
Let $\mathscr{L}=\mathscr{L}_1\times\cdots\times\mathscr{L}_k$.
To see that $\mathscr{L}$ 
is a $P$-graph,
it suffices to prove that $d$ satisfies the factorisation property. For this, suppose that 
$d((\lambda_1,\ldots,\lambda_k))=(x_1,\ldots,x_k)(y_1,\ldots,y_k)$
with each $x_i,y_i\in P_i$. 
Since each $\mathscr{L}_i$ is a $P_i$-graph, 
there exist unique $\gamma_i,\mu_i\in \Hom(\mathscr{L}_i)$ satisfying
$\lambda_i=\gamma_i\mu_i$, $d_i(\gamma_i)=x_i$ and $d_i(\mu_i)=y_i$.
Hence $(\gamma_1,\ldots,\gamma_k), (\mu_1,\ldots,\mu_k)\in 
\Hom(\mathscr{L})$ satisfy $(\lambda_1,\ldots,\lambda_k)=(\gamma_1,\ldots,\gamma_k)(\mu_1,\ldots,\mu_k)$, 
$$
d((\gamma_1,\ldots,\gamma_k))=(x_1,\ldots,x_k) \quad \text{ and } \quad
d((\mu_1,\ldots,\mu_k))=(y_1,\ldots,y_k).
$$
Suppose that $(\alpha_1,\ldots,\alpha_k), (\beta_1,\ldots,\beta_k)$ are any two morphisms in 
$\Hom(\mathscr{L})$ satisfying
$(\lambda_1,\ldots,\lambda_k)=(\alpha_1,\ldots,\alpha_k)(\beta_1,\ldots,\beta_k)$,
$d((\alpha_1,\ldots,\alpha_k))=(x_1,\ldots,x_k)$ and 
$d((\beta_1,\ldots,\beta_k))$ $= (y_1,\ldots,y_k)$. Then, for each $i$,
$\lambda_i=\alpha_i\beta_i$, $d_i(\alpha_i)=x_i$ and $d_i(\beta_i)=y_i$.
By the factorisation property in each $\mathscr{L}_i$, this implies
$\alpha_i=\gamma_i$ and $\beta_i=\mu_i$ for each $i$ 
 as required. 
 \end{proof}

We are particularly interested in identifying rooted, strongly-simple $P$-graphs as product graphs.
The following is a first step in our analysis.

\begin{lemma}
\label{lem: product of rooted strongly-simple is rooted strongly-simple}
Suppose $\{P_i\}_{i=1}^k$ is a family of semigroups and, for each $i$, 
$\mathscr{L}_i$ is a rooted, strongly-simple $P_i$-graph.
Then $\mathscr{L}_1\times\cdots\times\mathscr{L}_k$
is a rooted, strongly-simple $\left(P_1\times\cdots\times P_k\right)$-graph.
\end{lemma}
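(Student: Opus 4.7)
The plan is to verify the two conditions of Remark~\ref{characterisation of rooted, strongly-simple} directly, exploiting the coordinatewise structure of the product category established in the previous lemma. Let $\nu_0^{(i)}$ denote the root of $\mathscr{L}_i$, and let $\mathscr{L}=\mathscr{L}_1\times\cdots\times\mathscr{L}_k$. The candidate root is $\nu_0:=(\nu_0^{(1)},\ldots,\nu_0^{(k)})$.

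First I would prove existence. Given any object $\beta=(\beta_1,\ldots,\beta_k)\in\Obj(\mathscr{L})$, the rooted property of each $\mathscr{L}_i$ supplies a morphism $\lambda_i\colon\nu_0^{(i)}\mapsto\beta_i$, so that $(\lambda_1,\ldots,\lambda_k)\colon\nu_0\mapsto\beta$ is a morphism in $\mathscr{L}$ by the coordinatewise definition of $\dom$ and $\cod$. Next I would prove uniqueness of the morphism: if $(\lambda_1,\ldots,\lambda_k)$ and $(\mu_1,\ldots,\mu_k)$ are two morphisms from $\nu_0$ to $\beta$, then for each $i$ both $\lambda_i$ and $\mu_i$ are morphisms $\nu_0^{(i)}\mapsto\beta_i$ in $\mathscr{L}_i$; strong simplicity of $\mathscr{L}_i$ then forces $\lambda_i=\mu_i$, and the two product morphisms coincide. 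Strong simplicity between arbitrary objects of $\mathscr{L}$ follows by the same coordinatewise argument, replacing $\nu_0^{(i)}$ by an arbitrary $\alpha_i\in\Obj(\mathscr{L}_i)$.

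Finally I would argue uniqueness of the root. Suppose $\alpha=(\alpha_1,\ldots,\alpha_k)$ is also a generator of $\mathscr{L}$. Then for every choice of $\beta_i\in\Obj(\mathscr{L}_i)$ there exists a morphism $(\gamma_1,\ldots,\gamma_k)\colon\alpha\mapsto(\beta_1,\ldots,\beta_k)$, whose $i$-th coordinate is a morphism $\alpha_i\mapsto\beta_i$ in $\mathscr{L}_i$. Hence each $\alpha_i$ is a generator of $\mathscr{L}_i$, and the rootedness of $\mathscr{L}_i$ gives $\alpha_i=\nu_0^{(i)}$, whence $\alpha=\nu_0$.

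None of the steps looks like a genuine obstacle: the product category's morphism sets and identifying maps act componentwise, so both rootedness and strong simplicity transfer from factors to the product without any interaction between the semigroups $P_i$. The only point needing a little care is to phrase the uniqueness of the root using the characterisation in Remark~\ref{characterisation of rooted, strongly-simple}, rather than re-deriving it, so as to keep the proof short.
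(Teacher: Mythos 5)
Your proposal is correct and follows essentially the same route as the paper: take the tuple of roots as the candidate root and verify, coordinatewise, existence and uniqueness of morphisms via the characterisation in Remark~\ref{characterisation of rooted, strongly-simple}, using Lemma~\ref{lem: product of P-graphs a P-graph} for the $P$-graph structure. Your extra direct checks (strong simplicity between arbitrary objects and uniqueness of the root via generators) are harmless additions that the paper leaves implicit in its appeal to the remark.
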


\begin{proof}
Let $\mathscr{L}=\mathscr{L}_1\times\cdots\times\mathscr{L}_k$. 
By Lemma~\ref{lem: product of P-graphs a P-graph}, $\mathscr{L}$ is a $P$-graph.
To prove that $\mathscr{L}$ is rooted and strongly simple  it suffices,
by Remark~\ref{characterisation of rooted, strongly-simple}, to prove that 
there exists a unique $\nu\in\Obj(\mathscr{L})$ such that for every $\alpha\in\Obj(\mathscr{L})$
there exists a unique morphism $\lambda\in\Hom(\mathscr{L})$ 
satisfying $\lambda\colon \nu\mapsto\alpha$.

For each $i$, let $\nu_i$ be the root of $\mathscr{L}_i$.
Let $(\nu_1,\ldots,\nu_k)\in\Obj(\mathscr{L})$ and consider an arbitrary
$(\alpha_1,\ldots,\alpha_k)\in\Obj(\mathscr{L})$.
Since each $\mathscr{L}_i$ is strongly simple and $\alpha_i\in \mathscr{L}_i$,
there exists a unique $\lambda_i\in\Hom(\mathscr{L}_i)$ 
such that $\lambda_i\colon \nu_i\mapsto\alpha_i$.
Hence $(\lambda_1,\ldots,\lambda_k)\in \Hom(\mathscr{L})$ and satisfies
$(\lambda_1,\ldots,\lambda_k) \colon (\nu_1,\ldots,\nu_k) \mapsto (\alpha_1,\ldots,\alpha_k)$.
Suppose $(\mu_1,\ldots,\mu_k)\in \Hom(\mathscr{L})$ satisfies 
$(\mu_1,\ldots,\mu_k) \colon (\nu_1,\ldots,\nu_k) \mapsto (\alpha_1,\ldots,\alpha_k)$.
Then $\mu_i\colon \nu_i\mapsto \alpha_i$ for each $i$, and 
the strong simplicity of $\mathscr{L}_i$ implies that $\mu_i=\lambda_i$ for each $i$. 
Hence $(\mu_1,\ldots,\mu_k)=(\lambda_1,\ldots,\lambda_k)$, there is a unique morphism 
$(\lambda_1,\ldots,\lambda_k)\in \Hom(\mathscr{L})$ satisfying 
$(\lambda_1,\ldots,\lambda_k) \colon (\nu_1,\ldots,\nu_k) \mapsto (\alpha_1,\ldots,\alpha_k)$, and 
$\mathscr{L}$ is a rooted, strongly-simple $P$-graph.
\end{proof}

\subsection{$P$-graphs associated with subsemigroups of flat groups}

Throughout this sect\-ion: $H$ is a finitely generated subgroup of the totally disconnected, locally compact group $G$, and $H$ is isomorphic to 
$\mathbb{Z}^k$ for some $k\in\mathbb{N}$; $U$ is a compact, open subgroup of $G$ tidy for~$H$; and $P$ is a finitely generated subsemigroup of $H$ with $P\cap P^{-1} = \triv$  that is $s$-multiplicative over~$U$.
We construct a $P$-graph in terms of $U$ and $P$ with a specific structure. 
Just as M\"oller's construction begins with the copy $\{\nu x^n \mid n\geq0\}$ of $\mathbb{N}$ in the $U$-coset space, ours begins with a copy of $P$ viewed as a $P$-graph as follows.

Construct a category  $\mathscr{P}$ from $P$ such that 
\begin{eqnarray*}
\Obj(\mathscr{P}) &=& P\\
 \text{ and } \Hom(\mathscr{P}) &=& 
 \left\{ (y_1,y_2)\in P\times P \mid \exists y'\in P \hbox{ with }y_2 = y_1y'  \right\}.
\end{eqnarray*}
Define a functor $d\colon \mathscr{P} \to P$ by setting
$d(y)$ equal to the identity of $P$ for all $y\in \Obj(\mathscr{P})$ and, 
for $(y_1,y_2)\in \Hom(\mathscr{P})$ with $y_2 = y_1y'$, setting
$d((y_1,y_2))=y'$. This is well-defined because $P$ is left cancellative.
To prove that the factorisation property is satisfied,
suppose $(y_1,y_3)\in \Hom(\mathscr{P})$ and 
$d(y_1,y_3)=\lambda_1\lambda_2$ for some $\lambda_1,\lambda_2\in P$.
Hence $y_3=y_1\lambda_1\lambda_2\in P$.
Let $y_1\lambda_1=y_2\in P$.
Then $(y_1,y_2),(y_2,y_3)\in \Hom(\mathscr{P})$, and 
$(y_1,y_2)(y_2,y_3)=(y_1,y_3)$, with 
$d((y_1,y_2))=\lambda_1$ and  $d((y_2,y_3))= \lambda_2$ as required.
Hence $\mathscr{P}$ is a $P$-graph.

The $1$-skeleton of~$\mathscr{P}$ is 
the Cayley graph for~$P$ for which the generating set is all of~$P$. 
Let $\Sigma = \{x_1, \dots, x_n\}$ be the smallest generating set for~$P$. 
This exists and is finite by Proposition~\ref{prop:gen_set}. 
Let~$X$ denote the Cayley graph of~$P$ with respect to $\Sigma$, that is, $X$ is the directed graph that has $V(X)=P$ and $E(X)= \left\{ (x,x_ix) \mid x\in P,\ x_i\in \Sigma\right\}$. 
Then~$X$ is a subgraph of the $1$-skeleton of~$\mathscr{P}$. 

For $x\in P$, the descendant  $P$-graph of $\mathscr{P}$ is the category 
$\mathscr{P}^x$ having objects 
$$
\text{desc}(x) = \left\{ y\in P \mid y = xy' \text{ for some }y'\in P\right\}
$$
and morphisms all pairs $(y_1,y_2)$ from $\mathscr{P}$ such that 
$(y_1,y_2)\in \text{desc}(x)\times \text{desc}(x)$. 
The map $y\mapsto xy$, $y\in P$,  
is an isomorphism from $\mathscr{P}$ to $\mathscr{P}^x$. 
By considering the $1$-skeleton of $\mathscr{P}^x$, 
we identify a subgraph $X^x$ of $X$ with vertex set $\text{desc}(x)$ and 
edge set $E(X)\cap \text{desc}(x)^2$.
The map $y\mapsto xy$, $y\in P$,  
also gives a graph isomorphism from $X$ to $X^x$.

The analysis of our construction requires the following technical results 
which it will be less distracting to prove separately.

\begin{lemma}
\label{lem:subgroups_line_up}
Let $P$ and $U$ be as above and suppose that $x,y\in P$.  Then 
$$
U\cap (xy)^{-1}Uxy \leq U\cap x^{-1}Ux.
$$ 
\end{lemma}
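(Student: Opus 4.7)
My plan is to exploit the commutativity in $H \cong \ZZ^k$ together with the tidy decomposition $U = U_+U_-$ from Proposition~\ref{prop:common_factor}.

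Let $u \in U$ with $v := (xy)u(xy)^{-1} \in U$; the goal is to show $xux^{-1} \in U$. Since $x$ and $y$ commute as elements of $H$, a direct manipulation gives
\[
(xy)u(xy)^{-1} = y(xux^{-1})y^{-1},
\]
so $xux^{-1} = y^{-1}vy$, and the target $xux^{-1} \in U$ is equivalent to $v \in yUy^{-1}$.

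Next, I use $U = U_+U_-$ to write $u = u_+u_-$ with $u_\pm \in U_\pm$. Since $xy \in P$, Proposition~\ref{prop:common_factor} gives $(xy)u_-(xy)^{-1} \in U_-$, so this factor is automatically in $U \cap yUy^{-1}$. The problem then reduces to showing that $(xy)u_+(xy)^{-1} \in U$ implies $xu_+x^{-1} \in U$. Setting $h := xu_+x^{-1}$, we have $h \in xU_+x^{-1}$, and by the identity above applied to $u_+$ the hypothesis translates into $yhy^{-1} \in U$, i.e.\ $h \in y^{-1}Uy$. Thus the whole proof reduces to the key inclusion
\[
xU_+x^{-1} \cap y^{-1}Uy \subseteq U \qquad \text{for all } x,y \in P.
\]

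The main obstacle is establishing this last inclusion. To prove it, I would further factor $y^{-1}Uy = (y^{-1}U_+y)(y^{-1}U_-y)$. Since $y \in P$ we have $y^{-1}U_+y \subseteq U_+ \subseteq U$, so any $h \in y^{-1}Uy$ can be written as $h = \alpha\beta$ with $\alpha \in U_+$ and $\beta \in y^{-1}U_-y$. Combining this with $h \in xU_+x^{-1}$ and $\alpha \in U_+ \subseteq xU_+x^{-1}$ forces $\beta \in xU_+x^{-1} \cap y^{-1}U_-y$. Using the finer product decomposition $U = U_0U_1\cdots U_q$ of Theorem~\ref{thm:flat group decomp} together with the disjointness $J_P^+ \cap J_P^- = \emptyset$ from Lemma~\ref{lem:multiplicative_supports}, the only indices on which $x$ can expand $U_+$ and simultaneously $y^{-1}$ can expand $U_-$ are the uniscalar ones in $\{1,\ldots,q\}\setminus(J_P^+\cup J_P^-)$, and on those indices both $x$ and $y$ act trivially by conjugation. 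Hence $\beta$ must lie in the product of these uniscalar $U_j$ together with $U_0$, which is a subgroup of $U$, so $\beta \in U$ and $h = \alpha\beta \in U$. The delicate part is turning this component-wise heuristic into a rigorous argument, because the subgroups $U_j$ in the decomposition of Theorem~\ref{thm:flat group decomp} do not commute in general, and the factorization $U = U_+U_-$ is typically not unique.
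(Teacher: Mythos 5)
Your reduction is sound as far as it goes: writing $u=u_+u_-$ via the factorisation $U=U_+U_-$ of Proposition~\ref{prop:common_factor} (note that proposition assumes $P$ is \emph{maximal} $s$-multiplicative, so you must first enlarge $P$, as the paper does with its ``without loss of generality''), and using commutativity of $H$, the lemma does reduce to the inclusion $xU_+x^{-1}\cap y^{-1}Uy\subseteq U$. But that inclusion is exactly where your argument stops being a proof, and you flag this yourself. The component-wise step --- ``$\beta$ must lie in the product of the uniscalar $U_j$ together with $U_0$'' --- treats $U=U_0U_1\cdots U_q$ from Theorem~\ref{thm:flat group decomp} as if it were a direct product on which conjugation acts coordinatewise. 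It is not: the $U_j$ need not commute or normalise one another, factorisations of elements are not unique, so an element of $xU_+x^{-1}\cap y^{-1}U_-y$ (which largely lies outside $U$) has no well-defined ``components'', and membership in the two conjugates imposes no direct constraint on any coordinates. Worse, the unproven inclusion carries the full weight of the lemma: applying $g\mapsto x^{-1}gx$ turns it into $U_+\cap(xy)^{-1}U(xy)\subseteq x^{-1}Ux$, which is an immediate consequence of the lemma being proved (and, by your reduction, implies it). So the entire content of the statement has been deferred to the heuristic step; this is a genuine gap, not a loose end.

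The inclusion you need is in fact true, but the efficient route is the one the paper takes, which never decomposes individual elements. Working at the level of subgroups, for every $z\in P$ one has $z^{-1}U_+z\leq U_+\leq U$ and $U\cap z^{-1}U_-z=U_-$, whence $U\cap z^{-1}Uz=(z^{-1}U_+z)U_-$; applying this with $z=xy$ and $z=x$ and using $y^{-1}(x^{-1}U_+x)y=x^{-1}(y^{-1}U_+y)x\leq x^{-1}U_+x$ (commutativity of $H$ together with $y^{-1}U_+y\leq U_+$) gives the lemma in one line. Any honest completion of your argument ends up proving this same identity, or else invoking the deeper tidiness machinery for $U_{++}$ and $U_{--}$ (in the spirit of \cite[Lemma~9]{Wi:structure}), so I would recommend recasting the proof at the subgroup level rather than trying to rescue the coordinatewise picture.
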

\begin{proof}
It may be assumed without loss of generality that $P$ is a maximal $s$-multiplicative semigroup over~$U$. The hypotheses then allow us to apply Proposition~\ref{prop:common_factor} and conclude that $U=U_+U_-$, where $z^{-1}U_+z\leq U_+$ and $z^{-1}U_-z\geq U_-$ for every $z\in P$. Since $U\cap z^{-1}U_-z = U_-$ and $U\cap z^{-1}U_+z = z^{-1}U_+z\leq U$, it follows that 
$$
U\cap z^{-1}Uz = U\cap (z^{-1}U_+z)( z^{-1}U_-z) = (z^{-1}U_+z) U_-
$$ 
for every $z\in P$. Hence
$$
U\cap (xy)^{-1}Uxy = y^{-1}(x^{-1}U_+x)y U_- \leq (x^{-1}U_+x) U_- = U\cap x^{-1}Ux
$$
as claimed. 
\end{proof}

\begin{lemma}
\label{lem:double cosets}
Suppose $H\cong\ZZ^k$ is a discrete subgroup of a totally disconnected, locally finite group $G$ and that~$U$ is tidy for~$H$. Suppose $P$ is a subsemigroup of $H$  that is multiplicative over~$U$. Then the double cosets $UxU$ and $UyU$ are disjoint for any distinct~$x$ and~$y$ in~$P$. 
\end{lemma}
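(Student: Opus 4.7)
The plan is to prove this by contradiction, crushing many $P$-elements into a single double coset and then using the discreteness of $H$ to limit how many can fit. The first ingredient is the \emph{semigroup property} $(UxU)(UyU)=UxyU$ for all $x,y\in P$, which I would establish by a Haar measure computation. Normalising the left Haar measure $\mu$ so that $\mu(U)=1$ makes every left $U$-coset have unit measure, so $\mu(UzU)$ equals the number of left $U$-cosets in $UzU$; by the left-sided analogue of Lemma~\ref{lem:double_coset}\eqref{lem:double_coset3} this is $s(z^{-1})$. The multiplicativity of $s$ on $P$, combined with $\Delta(xy)=\Delta(x)\Delta(y)$ and $\Delta(z)=s(z)/s(z^{-1})$, gives $s((xy)^{-1})=s(x^{-1})s(y^{-1})$. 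Writing $UxU$ as the disjoint union of its $s(x^{-1})$ left cosets $w_iU$ yields $(UxU)(UyU)=\bigcup_i w_iUyU$, so by subadditivity $\mu((UxU)(UyU))\le s(x^{-1})\mu(UyU)=\mu(UxyU)$. Since $UxyU\subseteq(UxU)(UyU)$ trivially and both sides are open with the same measure, they coincide.

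Suppose now for contradiction that $x,y\in P$ are distinct with $UxU=UyU$. Iterating the semigroup property gives $Ux^{n-k}y^kU=Ux^nU$ for every $0\le k\le n$, so the elements $\{x^{n-k}y^k\}_{0\le k\le n}$ are $n+1$ members of $P\cap Ux^nU$; they are pairwise distinct because $xy^{-1}\ne 1$ has infinite order in the torsion-free group $H\cong\ZZ^k$. Discreteness of $H$ in the tdlc group $G$ combined with torsion-freeness makes $H\cap U$ a finite subgroup of a torsion-free group, hence $H\cap U=\{1\}$; consequently each right $U$-coset meets $H$ in at most one point, and Corollary~\ref{for:cosets and scale} yields $|P\cap Ux^nU|\le s(x^n)=s(x)^n$. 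When $s(x)=1$ the bound $n+1\le 1$ already fails for $n\ge 1$; indeed $s(x)=1$ means $UxU=Ux$ is a single right coset, so $UxU=UyU$ forces $Ux=Uy$, whence $x^{-1}y\in H\cap U=\{1\}$, contradicting $x\ne y$.

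The hard case is $s(x)\ge 2$, where $n+1\le s(x)^n$ is not itself contradictory. Here my plan is to exploit the abelianness of $H$: setting $h=xy^{-1}\ne 1$, the identity $UxU=UyU$ together with the semigroup property gives $UpxU=UpyU$ for every $p\in P$, and a short direct computation (using that $h$ commutes with every element of $H$) shows $h\in U\cdot wUw^{-1}$ for every $w\in xP\cup yP$. Taking $w=xy\sigma^n$ for a ``corner'' element $\sigma\in P$ supplied by Lemma~\ref{lem:multiplicative_supports}, and applying the tidy decomposition $U=U_+U_-$ from Proposition~\ref{prop:common_factor}, the $U_-$-part of $wUw^{-1}$ contracts to its minimum as $n\to\infty$ while the $U_+$-part expands unboundedly; I would then aim to show that the intersection $\bigcap_{n\ge 0}(U\cdot wUw^{-1})\cap H$ collapses to $\{1\}$, forcing $h=1$ and hence $x=y$, the desired contradiction. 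The technical heart — and main obstacle — is that $U_+$ and $U_-$ need not commute, so the set $U\cdot wUw^{-1}$ admits no clean description as a product of $\pm$-subgroups, making the verification that the $H$-intersection truly collapses to the identity genuinely delicate.
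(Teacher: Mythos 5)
Your first two paragraphs are correct but do not reach the heart of the lemma. The Haar-measure argument for $(UxU)(UyU)=UxyU$ works (both sides are clopen, $UxyU$ lies in the product, and $s((xy)^{-1})=s(x^{-1})s(y^{-1})$ does follow from multiplicativity of $s$ on $P$ together with $\Delta(z)=s(z)/s(z^{-1})$), and the count $n+1\le|P\cap Ux^nU|\le s(x^n)=s(x)^n$ via $H\cap U=\triv$ and Corollary~\ref{for:cosets and scale} is also fine. But, as you say yourself, this only produces a contradiction when $s(x)=1$; for $s(x)\ge 2$ the inequality is vacuous, so the decisive case is exactly what remains unproved. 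Your third paragraph is a plan rather than a proof, and its central claim is not merely unverified but oriented the wrong way: for $w=xy\sigma^n\in P$, Proposition~\ref{prop:common_factor} gives $wU_+w^{-1}\ge U_+$ increasing with $n$, so $U\,wUw^{-1}\supseteq U\,(wU_+w^{-1})$ \emph{grows} as $n\to\infty$; the constraints $h\in U\,w_nUw_n^{-1}$ get weaker, not stronger, and their intersection always contains $U\,(xy)U_+(xy)^{-1}$, a set strictly larger than $U$ whenever $s(xy)>1$. Deciding whether that set meets $H$ only in the identity is essentially the original problem, so the asserted collapse to $\triv$ is the crux and it is missing. This is a genuine gap.

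The missing ingredient, which is how the paper argues, is to use tidiness through its boundedness criterion rather than through coset counting. Since $U=U_+U_-$ with $yU_-y^{-1}\le U_-$ and $y^{-1}U_+y\le U_+$ for $y\in P$ (Proposition~\ref{prop:common_factor}), one has $UyU=U_-\,y\,U_+$, so $UxU=UyU$ yields $x=uyv$ with $u\in U_-$ and $v\in U_+$. Then $v=y^{-1}u^{-1}x$ and commutativity of $H$ give $y^nvy^{-n}=y^{-1}(y^nu^{-1}y^{-n})x\in y^{-1}U_-x$ for all $n\ge0$, a compact set; by \cite[Lemma~9]{Wi:structure}, tidiness of $U$ for $y$ then forces $y^nvy^{-n}\in U$ for all $n$, hence $xy^{-1}=u\,yvy^{-1}\in U$, and $H\cap U=\triv$ (discreteness plus torsion-freeness, as in your second paragraph) gives $x=y$. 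Note that this is where tidiness enters essentially: no refinement of the counting in your second paragraph can substitute for the orbit-boundedness argument, because the count is exponentially too weak precisely when $s(x)>1$.
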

\begin{proof}
Assume that $UxU$ and $UyU$ are not disjoint. Then, in fact, $UxU = UyU$ and there are $u,v\in U$ such that $x = uyv$. We may assume that $u\in U_-$ and $v\in U_+$, where~$U_+$ and~$U_-$ are the subgroups found in Proposition~\ref{prop:common_factor} satisfying that $U = U_+U_-$ and that $y^{-1}U_+y\leq U_+$ and $yU_-y^{-1}\leq U_-$. Then $v = y^{-1}u^{-1}x$ and so commutativity of~$H$ implies that, for every $n\geq0$,
$$
y^nvy^{-n} = y^{-1}(y^nu^{-1}y^{-n})x \in y^{-1}U_-x,
$$
which is compact. Since~$U$ is tidy for~$y$ it follows by \cite[Lemma~9]{Wi:structure} that $y^nvy^{-n}\in U$ for every~$n$ and hence that $xy^{-1} = uyvy^{-1}\in U$. Since~$H$ is discrete and torsion-free, while~$U$ is a compact subgroup of~$G$, it follows that $xy^{-1} = \ident$, that is,~$x$ and~$y$ are not distinct. 
\end{proof}

We are now in a position to state and prove our main result.

\begin{theorem}
\label{thm:P-graph G}
Suppose $H\cong\ZZ^k$ is a discrete finitely generated subgroup of a totally disconnected, locally compact group $G$ and that~$U$ is tidy for~$H$. Let $P$ be a subsemigroup of $H$ with a minimal finite generating set $\Sigma = \{x_1, \dots, x_n\}$ and suppose that $P$ is multiplicative over $U$.
Let $\Omega\equiv U\backslash G$, the set of right $U$-cosets in $G$, write $\nu$ for the element $U\in\Omega$ and $\nu x U = \left\{ \nu x u \in \Omega \mid u\in U\right\}$.
Define a category $\mathscr{G}$ by 
\begin{eqnarray*}
\Obj(\mathscr{G}) &=&  \bigcup_{x\in P} \nu x U\\
\text{ and } \quad
\Hom(\mathscr{G}) &=& \bigcup \left\{ (\nu x,\nu xy')U \mid x, y'\in P\right\}
\end{eqnarray*}
with composition $(\nu xu,\nu yu)(\nu sv,\nu zv)$ 
being defined when $\nu yu=\nu sv$, in which case
\[
(\nu xu,\nu yu)(\nu yu,\nu zu) = (\nu xu,\nu zu) 
\quad \text{ for } \quad x,y=xy',z=yz'\in P, u\in U. 
\]
Define $d\colon \mathscr{G} \to P$ by $d(x)=1$ and 
$d(\nu xu,\nu xy'u)=y'$ for $x,y'\in P$ and $u\in U$.

Then $\mathscr{G}$ is a regular, rooted, strongly-simple $P$-graph 
whose $1$-skeleton is a graph $\Gamma_P$ with 
\begin{eqnarray*}
V(\Gamma_P) &=&  \bigcup_{x\in P} \nu x U\\
\hbox{ and }\quad 
E(\Gamma_P) &=& 
\bigcup \left\{ (\nu x,\nu xx_i)U \mid x\in P,\,x_i\in \Sigma\right\}.
\end{eqnarray*}
\end{theorem}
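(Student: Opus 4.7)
The plan is to verify the asserted properties of $\mathscr{G}$ in the following order: well-definedness as a category, the factorisation property (to obtain a $P$-graph), rooted-ness at $\nu$, strong simplicity, regularity, and finally the identification of the $1$-skeleton with $\Gamma_P$. The recurring technical tools will be Lemma~\ref{lem:double cosets} (distinct elements of $P$ lie in distinct double cosets $UxU$) and Lemma~\ref{lem:subgroups_line_up} (the nesting $(xy)^{-1}U(xy)\cap U \leq x^{-1}Ux\cap U$).

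I would first show that the degree map $d$ is well-defined: if a pair of cosets can be written as both $(\nu xu,\nu xy'u)$ and $(\nu\tilde x\tilde u,\nu\tilde x\tilde y'\tilde u)$, then Lemma~\ref{lem:double cosets} applied to both coordinates forces $x=\tilde x$ and $xy'=x\tilde y'$, giving $y'=\tilde y'$ by cancellativity in $H$. For composition, given two composable morphisms $(\nu xu,\nu xy'u)$ and $(\nu\tilde x\tilde u,\nu\tilde x\tilde y'\tilde u)$ sharing a middle coset, Lemma~\ref{lem:double cosets} forces $\tilde x=xy'$; Lemma~\ref{lem:subgroups_line_up} then shows $\tilde u u^{-1}\in (xy')^{-1}U(xy')\cap U \subseteq x^{-1}Ux\cap U$, so $\tilde u$ serves as a common representative for the domain of the first morphism. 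The composite $(\nu x\tilde u,\nu x(y'\tilde y')\tilde u)$ lies in $\Hom(\mathscr{G})$ with the correct multiplicative degree, and associativity is automatic since composition is endpoint-determined. For the factorisation property, existence is immediate; uniqueness requires that any intermediate vertex $\nu xy_1 ru$ coincide with $\nu xy_1u$, which follows because matching codomains forces $r\in (xy_1y_2)^{-1}U(xy_1y_2)\cap U \subseteq (xy_1)^{-1}U(xy_1)\cap U$ (again by Lemma~\ref{lem:subgroups_line_up}), i.e., $r$ fixes $\nu xy_1u$.

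Rooted-ness at $\nu$ is direct, since $(\nu,\nu yv) = (\nu\cdot 1\cdot v,\nu\cdot 1\cdot y\cdot v) \in \Hom(\mathscr{G})$; strong simplicity is automatic because morphisms are ordered pairs of objects. Regularity is the main obstacle. For $\alpha=\nu xu_0$ I plan to construct an isomorphism $\phi_\alpha\colon\mathscr{G}\to\mathscr{G}^\alpha$. A preliminary index computation using multiplicativity $s(xy)=s(x)s(y)$ and the chain rule of indices yields
\[
|x^{-1}Ux\cap U : (xy)^{-1}U(xy)\cap U| = |U : y^{-1}Uy\cap U| = s(y)
\]
for every $y\in P$. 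Conjugation by $x$ (using commutativity of $H$) transports the left-hand quotient into $(U\cap xUx^{-1})/(y^{-1}Uy\cap xUx^{-1})$, and the inclusion $U\cap xUx^{-1}\hookrightarrow U$ induces a well-defined injection, hence a bijection, onto $U/(y^{-1}Uy\cap U)$. Define $\phi_\alpha$ on objects by sending $\nu yv$ to the descendant of $\alpha$ at relative level $y$ corresponding to the coset of $v$ under this bijection; on morphisms $\phi_\alpha$ is then forced by strong simplicity. The nontrivial check is that $\phi_\alpha$ respects the factorisation property: whenever $(\nu yv,\nu yy'v)\in\Hom(\mathscr{G})$, the pair of images is a morphism of $\mathscr{G}^\alpha$. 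Tracking the associated parametrising elements $v_r,v_{r'}\in U\cap xUx^{-1}$ and using Lemma~\ref{lem:subgroups_line_up} (specifically $(yy')^{-1}U(yy')\cap U\subseteq y^{-1}Uy\cap U$) shows that $v_r v_{r'}^{-1}\in y^{-1}Uy\cap U\cap xUx^{-1}$, whose conjugate by $x^{-1}$ lies in the required stabiliser $(xy)^{-1}U(xy)\cap U$.

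Finally, the identification of the $1$-skeleton is immediate from Definition~\ref{defn:regular}(1): the morphisms of degree $x_i\in\Sigma$ in $\mathscr{G}$ are by construction exactly the pairs $(\nu xu,\nu xx_iu)$ with $x\in P$ and $u\in U$, so the $1$-skeleton has the claimed vertex and edge sets and therefore equals $\Gamma_P$.
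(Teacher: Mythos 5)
Your proposal is correct, and for most of the argument (well-definedness of $d$ via Lemma~\ref{lem:double cosets}, closure and associativity of composition, existence and uniqueness in the factorisation property via the nesting of stabilisers from Lemma~\ref{lem:subgroups_line_up}, rootedness and strong simplicity) it follows essentially the paper's route; the genuine divergence is in the proof of regularity. The paper constructs $\phi_\alpha$ explicitly from the factorisation $U=U_+U_-$ of Proposition~\ref{prop:common_factor}: since $\nu yU=\nu yU_+$, representatives may be taken in $U_+$, and the map $\nu yw\mapsto \nu xy(x^{-1}wx)$ (followed by right multiplication by $u$) is checked directly to be a bijective, morphism-preserving map onto $\mathscr{G}^{\nu x}$. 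You instead parametrise the level-$y$ objects by $(y^{-1}Uy\cap U)\backslash U$ and the relative-level-$y$ descendants of $\alpha$ by $\bigl((xy)^{-1}U(xy)\cap U\bigr)\backslash\bigl(x^{-1}Ux\cap U\bigr)$, and use scale-multiplicativity in the form $|x^{-1}Ux\cap U:(xy)^{-1}U(xy)\cap U|=s(y)$, together with commutativity of $H$, to obtain a canonical bijection between the two coset spaces; this is a valid alternative that makes the role of multiplicativity more transparent and uses the $U_\pm$ factorisation only through Lemma~\ref{lem:subgroups_line_up}, at the price of a finite counting step to upgrade the inclusion-induced injection to a bijection. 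One point you should still write out: you verify that $\phi_\alpha$ carries morphisms to morphisms, but for an isomorphism onto $\mathscr{G}^\alpha$ you also need that it reflects morphisms (equivalently, is surjective on $\Hom$; injectivity on $\Hom$ is automatic from injectivity on objects and strong simplicity). This closes with the same computation run backwards — if the images of $\nu yv$ and $\nu yy'v'$ admit a common representative, then conjugating by $x$ and applying Lemma~\ref{lem:subgroups_line_up} gives $v'v^{-1}\in y^{-1}Uy\cap U$, so $\nu yv=\nu yv'$ and the preimage pair is already a morphism — or by counting: an object at level $y$ and its image each admit exactly $s(y')$ morphisms of degree $y'$, again by multiplicativity. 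With that supplied, your argument is complete and equivalent in strength to the paper's.
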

\begin{proof}
Lemma~\ref{lem:double cosets} implies that the union $\bigcup_{x\in P} \nu x U$ defining $\Obj(\mathscr{G})$ is disjoint. Hence $\nu xu$ and~$\nu xy'u$ determine~$x$ and~$xy'$ uniquely and the degree map is well-defined. 

We begin by making some general observations about $\mathscr{G}$ 
that will help in establishing that it is a regular, rooted, strongly-simple $P$-graph.

Note that $U$ acts on $\mathscr{G}$ (and $\Gamma_P$) by multiplication on the right. 
For each $x\in P$, the $U$-orbit $\nu x U$ is a finite subset of $\Omega$
by Lemma~\ref{lem:double_coset}. 
Indeed, the stabiliser of $\nu x$ is $U\cap x^{-1}Ux$. 
Hence the map $u \mapsto \nu xu : U \to \nu xU$ induces a bijection 
\begin{equation}
\label{eq:objects_as_cosets}
\mathsf{b}_x : (U\cap x^{-1}Ux)\backslash U \to \nu x U
\end{equation}
and the cardinality of $\nu x U$ is $|U : U\cap x^{-1}Ux| = s(x)$
by  Corollary~\ref{for:cosets and scale}.
Put $\mathfrak{s}_i = s(x_i)$ for $x_i$ in the generating set $\Sigma$. 
Since $P$ is commutative, each $x\in P$ is a product, 
$x = \prod_i x_i^{m_i}$ for $m_1, \dots, m_n\in\mathbb{N}$, 
and multiplicativity of $P$ over $U$ implies that the cardinality of $\nu xU$ is
$$
|\nu x U| = \prod_{i=1}^n \mathfrak{s}_i^{m_i}.
$$
Even though the expression for $x$ in terms of $\{x_1,\dots, x_n\}$ need not be unique, 
the product $\prod_{i=1}^n \mathfrak{s}_i^{m_i}$ must be 
independent of which expression is used because $|\nu x U| $ is. 

It follows from Lemma~\ref{lem:subgroups_line_up} that, if $x$ and $y = xy'$ belong to $P$, 
then 
$$U\cap y^{-1}Uy \leq U\cap x^{-1}Ux$$ 
and so there is a well-defined and surjective  
\emph{truncation} map $\mathsf{trun}_{x,y} : \nu yU \to \nu xU$ defined by 
$$
\mathsf{trun}_{x,y}(\nu yu) = \nu xu.
$$
This map is not generally injective and $\mathsf{trun}_{x,y}^{-1}(\nu x u)$ is equal to the set of morphisms $\nu xu \to \nu yU$. 
The set of morphisms between $\nu x U$ and $\nu y U$ is $(\nu x, \nu y)U$, and so we have
$\mathsf{trun}_{x,y}^{-1}(\nu xu) = (\nu x, \nu y)(U\cap x^{-1}Ux)u$, which has cardinality 
$$
|U\cap x^{-1}Ux : U\cap y^{-1}Uy|  = \prod_{i=1}^n \mathfrak{s}_i^{p_i}
$$  
where the $p_i$ are such that $y' = \prod_{i=1}^n x_i^{p_i}$. 
Given $x$, $y = xy'$, and $z = yz'$ in $P$, the truncation maps satisfy 
$$
\mathsf{trun}_{x,y} \circ \mathsf{trun}_{y,z} = \mathsf{trun}_{x,z}.
$$ 

We now proceed to prove that $\mathscr{G}$ is a regular, rooted, strongly-simple $P$-graph by verifying the conditions in reverse order.

To prove that $\mathscr{G}$ is a $P$-graph we must prove the factorisation property.
Suppose $\lambda\in\mathscr{G}$ with $d(\lambda)=y' = y_1y_2$ for some $y_1,y_2\in P$. 
Thus $\lambda= (\nu xu,\nu xy'u)$ for some $x\in P$ and $u\in U$. Then 
$\lambda_2=(\nu xy_1u, \nu xy_1y_2u), \lambda_1= (\nu xu,\nu xy_1u)\in \Hom(\mathscr{G})$ 
are uniquely defined by the conditions $\nu xy_1u = \mathsf{trun}_{xy_1,xy'}(\nu xy'u)$ and 
$\nu xu = \mathsf{trun}_{x, xy_1}(\nu xy_1u)$ respectively. 
Moreover, $d(\lambda_1)=y_1$, $d(\lambda_2)=y_2$ and  
$\lambda_1\lambda_2=\lambda$ as required for the factorisation property.
Thus $\mathscr{G}$ is a $P$-graph.

We next prove that $\mathscr{G}$  is rooted and strongly simple.
First, note that $\nu$ is the root for $\mathscr{G}$ because $\mathscr{G}^\nu=\mathscr{G}$. 
Next, for $\alpha = \nu xu, \beta = \nu yw\in \Obj(\mathscr{G})$, 
there is a morphism $\lambda : \alpha\to\beta$ only if $y=xy'$ and 
$u$ and $w$ may be chosen to be equal. 
In that case $\lambda = (\nu xu, \nu yu)$ is the unique such morphism because $\alpha$ and~$\beta$ determine~$x$ and~$y$, and hence~$y'$, uniquely. 

To see that $\mathscr{G}$ is regular, we will construct, for each $\alpha = \nu xu$ in $\Obj(\mathscr{G})$, 
an isomorphism $\phi_\alpha : \mathscr{G} \to \mathscr{G}^\alpha$. 
For this, consider $\nu yw\in \mathscr{G}$ and recall from Proposition~\ref{prop:common_factor} that 
$\nu=U = U_-U_+$ with $yU_-y^{-1} \leq U_- \leq U$ for every $y\in P$. 
Hence for each $y\in P$
$$
\nu yU = U yU_-U_+ = U(yU_-y^{-1})yU_+ = Uy U_+ =\nu yU_+.
$$ 
Thus the coset representative $w$ in $\nu yw$ may be chosen from $U_+$, as may $u$ in~$\nu xu = \alpha$. 

Next consider $(\nu yw)x = \nu yx (x^{-1}wx) = \nu xy (x^{-1}wx)$,
which belongs to $\nu xyU$ because $x^{-1}U_+x\leq U_+$. 
Moreover, it is an object in the descendant $P$-graph $\mathscr{G}^{\nu x}$ because 
$(\nu x, \nu xy).x^{-1}wx = (\nu x, \nu xy(x^{-1}wx))$ is a morphism. 
We claim that the right $U$-coset $(\nu yw)x = \nu xy (x^{-1}wx)$ 
depends only on $\nu yw$ and not on $w$. This is because the $\nu y$-stabiliser  $(U_+)_{\nu y} = y^{-1}U_+y$, 
whence $x^{-1}(U_+)_{\nu y}x = (xy)^{-1}U_+xy = (U_+)_{\nu xy}$. 
Hence
 $$
\phi_1 :  \nu yw \mapsto \nu xy (x^{-1}wx) 
 $$
 is a well-defined map $\mathscr{G} \to \mathscr{G}^{\nu x}$. 
The same considerations show that $\phi_1$ is injective. 
Surjectivity of $\phi_1$ follows from the fact that, if $\nu xyw\in \mathscr{G}^{\nu x}$, 
then $w\in (U_+)_{\nu x} = x^{-1}U_+x$. 
That $\phi_1$ is a homomorphism of $P$-graphs may be seen by noting that, 
if $\lambda = (\nu yw, \nu yzw)\in \hbox{\rm Hom}(\mathscr{G})$, then 
 $$
 (\phi(\nu yw), \phi(\nu yzw)) = (\nu xy (x^{-1}wx), \nu xyz (x^{-1}wx))
 $$ 
 belongs to $\hbox{\rm Hom}(\mathscr{G}^{\nu x})$. 
 Therefore $\phi_1 : \mathscr{G} \to \mathscr{G}^{\nu x}$ is an isomorphism of $P$-graphs. 
 The construction of the objects and morphisms in $\mathscr{G}$ implies that 
 right multiplication by elements of $U$ is an isomorphism. Hence 
 $$
\phi_2 : \nu xy w \to \nu xywu
$$
is an isomorphism from $\mathscr{G}^{\nu x}$ to $\mathscr{G}^{\nu xu}$. Therefore 
$$
\phi_\alpha = \phi_2\circ\phi_1 : \mathscr{G} \to \mathscr{G}^{\alpha}
$$
is the desired isomorphism and $ \mathscr{G}$ is regular.

The $1$-skeleton of $\mathscr{G}$ is the directed graph  $\Gamma_P$ with
\begin{eqnarray*}
V(\Gamma_P) &=&  \bigcup_{x\in P} \nu x U\\
\hbox{ and }\quad E(\Gamma_P) &=& 
\bigcup \left\{ (\nu x,\nu xx_i)U \mid x\in P,\,x_i\in \Sigma\right\}.
\end{eqnarray*}
Indeed, the morphisms of $\mathscr{G}$ can be 
constructed by repeatedly composing edges of $\Gamma_P$, so that 
$\Hom(\mathscr{G}) = \bigcup_{\ell\geq0} E^\ell(\Gamma_P)$
where $E^\ell(\Gamma_P)$ denotes the paths of length $\ell$ in $\Gamma_P$.
\end{proof}

\begin{remark}
Note that since the $U$-cosets $\nu x$, ${x\in P}$ are disjoint we can identify $\mathscr{P}$ with the full sub-$P$-graph of $\mathscr{G}$ whose objects are $\bigcup_{x\in P} \nu x$. In this way we identify $\mathscr{P} \subset \mathscr{G}$ and $X\subset \Gamma_P$. Moreover, $U$ acts on $\mathscr{G}$ and $\Gamma_P$ by right multiplication and 
$$
\mathscr{G} = \mathscr{P}.U 
\quad\text{ and }\quad 
\Gamma_P = X.U,
$$
so that $\mathscr{G}$ and $\Gamma_P$ may be viewed as being 
composed of copies of $\mathscr{P}$ and $X$ respectively.
Moreover, $ \mathscr{P}$ and $X$ can be viewed as homomorphic images of 
$\mathscr{G}$ and $\Gamma_P$ respectively.
\end{remark}

Since the scale of $x\in P$ is independent of the subgroup~$U$ tidy for~$H$, it follows that the number of right $U$-cosets in each double $U$-coset $UxU$ ($x\in H$) is independent of $U$, and also that, if $P$ is multiplicative over one subgroup tidy for~$H$, then it is multiplicative over all. We may therefore observe that:

\begin{corollary}
The $\Gamma_P$-graph $\mathscr{G}$ defined in Theorem~\ref{thm:P-graph G} is independent of the tidy subgroup~$U$ used in its construction.
\end{corollary}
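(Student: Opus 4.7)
The plan is to show that if $U$ and $V$ are both compact, open subgroups of $G$ tidy for $H$ with $P$ multiplicative over each, then the $P$-graphs $\mathscr{G}_U$ and $\mathscr{G}_V$ produced by Theorem~\ref{thm:P-graph G} are isomorphic. The paragraph immediately preceding the corollary supplies the key ingredients: multiplicativity of $P$ over some tidy subgroup is really a joint property of $P$ and $H$, and the branching numbers $\mathfrak{s}_i = s(x_i)$ for $x_i \in \Sigma$ agree in both constructions. Hence Theorem~\ref{thm:P-graph G} applies in both cases, so $\mathscr{G}_U$ and $\mathscr{G}_V$ are regular, rooted, strongly simple $P$-graphs with the same root-branching data.

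Next I would construct the isomorphism $\Phi \colon \mathscr{G}_U \to \mathscr{G}_V$ inductively over the partial order on $Q(P)$ appearing in Proposition~\ref{prop:gen_set}, starting by mapping the root $\nu_U = U$ of $\mathscr{G}_U$ to the root $\nu_V = V$ of $\mathscr{G}_V$. Proposition~\ref{prop:regular_P-graph4} guarantees that at each level $x = \prod_i x_i^{m_i}$ the object sets have the common cardinality $\prod_i \mathfrak{s}_i^{m_i}$. For each generator $x_i \in \Sigma$, I would choose a bijection between the $\mathfrak{s}_i$ objects at level $x_i$ in $\mathscr{G}_U$ and those in $\mathscr{G}_V$; strong simplicity (see Remark~\ref{characterisation of rooted, strongly-simple}) determines $\Phi$ on the morphisms from the root. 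The regularity isomorphisms $\phi_\alpha$ constructed in the proof of Theorem~\ref{thm:P-graph G} then transport this prescription to every descendant $P$-graph, so $\Phi$ is defined recursively on all of $\Obj(\mathscr{G}_U)$ and, again by strong simplicity, uniquely on $\Hom(\mathscr{G}_U)$.

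The main obstacle is ensuring consistency at objects whose level $x$ admits multiple factorisations as a word in $\Sigma$: commutativity in $H \cong \ZZ^k$ forces each $P$-graph to identify paths that rearrange the generators, and these identifications must correspond under $\Phi$. I would resolve this by using the factorisation property of both $\mathscr{G}_U$ and $\mathscr{G}_V$ together with the fact that the underlying relations $x_i x_j = x_j x_i$ are genuine equalities in $G$; consequently they impose matching identifications in the two coset spaces $U \backslash G$ and $V \backslash G$. Completing the induction then yields a $P$-graph isomorphism, witnessing that $\mathscr{G}$ is, up to isomorphism, independent of the tidy subgroup used in its construction.
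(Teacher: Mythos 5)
There is a genuine gap at the step you yourself flag as the ``main obstacle''. Your construction of $\Phi$ uses only the following data: the root, the cardinalities $\mathfrak{s}_i=s(x_i)$ (equivalently $|V_x|=\prod_i\mathfrak{s}_i^{m_i}$ from Proposition~\ref{prop:regular_P-graph4}), strong simplicity, and the regularity isomorphisms $\phi_\alpha$. In effect you are arguing that any two regular, rooted, strongly simple $P$-graphs with the same branching numbers on the generators are isomorphic. That is false, and the paper itself contains the counterexample: in Example~\ref{ex:multi_ex} with $J=\{1,2,3\}$ one has $s(e_1)=s(e_2)=p^2$, so $\Gamma_{P_J}$ has exactly the same level cardinalities as the product of two rooted trees of out-valency $p^2$, which is also a regular, rooted, strongly simple $\mathbb{N}^2$-graph; yet the two are not isomorphic, as the computation of common descendants of vertices in $V_{(1,0)}$ and $V_{(0,1)}$ shows. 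So an induction that matches levels by cardinality and transports choices by regularity cannot close: the point where it breaks is precisely the incidence pattern at an object whose degree has several factorisations, i.e.\ which objects over $x_i$ and over $x_j$ admit a common descendant over $x_ix_j$, and how many. Your proposed fix --- that $x_ix_j=x_jx_i$ holds in $G$ and therefore ``imposes matching identifications'' in $U\backslash G$ and $V\backslash G$ --- does not supply this: by Lemma~\ref{lem:double_coset} the identifications of coset paths are governed by the subgroups $U\cap x^{-1}Ux$ and their relative positions, which depend on $U$ itself and not merely on relations in $H$ or on coset counts. What has to be proved is exactly that this incidence data is the same for any two tidy subgroups over which $P$ is multiplicative, and nothing in the proposal addresses that.

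For comparison, the paper does not attempt such an isomorphism-by-induction at all: the corollary is presented as an observation following the remark that the scale of each $x\in P$ is independent of the tidy subgroup, so the number of right cosets in each double coset $UxU$ and the multiplicativity of $P$ are the same for every tidy $U$. If you want a complete argument along your lines, you need an additional ingredient genuinely comparing the coset geometries for two tidy subgroups $U$ and $V$ --- for instance reducing to a common tidy subgroup such as $U\cap V$ and exhibiting explicit compatible maps of the coset fibres $\bigl(U\cap x^{-1}Ux\bigr)\backslash U$ of \eqref{eq:objects_as_cosets}, or an intrinsic description (in terms of $H$, $P$ and the factorisation $U=U_+U_-$ of Proposition~\ref{prop:common_factor}) of the truncation-map fibres that is visibly independent of the choice of tidy subgroup. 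Without such a step the proof does not go through.
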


\section{Examples}
\label{sec:examples}

In this section we explore some examples arising from 
 totally disconnected, locally compact groups. 
We first give explicit examples including 
one case in which $P$ is not isomorphic to $\NN^k$ and 
one in which we obtain an $\NN^k$-graph that is not a product of trees.
Then we give a sufficient criterion for the $P$-graph to be a product of trees.

\subsection{Explicit examples}

The groups in this subsection all have the form $G = \mathbb{Q}_p^k\rtimes \mathbb{Z}^l$ for some positive integers $k,l$ but with different actions of $\mathbb{Z}^l$ on $\mathbb{Q}_p^k$ in each case. The group $H = \{0\}\rtimes \mathbb{Z}^l$ is abelian and hence flat. The $s$-multiplicative subsemigroups of $H$ are described in each case and the corresponding $P$-graph described for many of them. The descriptions of the $P$-graphs are illustrated by diagrams which are the Cayley graphs of $P$ with respect to the minimal generating sets in which the vertex $x\in P$ is labelled by $d^{-1}(x)$, the inverse image of $x$ under the degree map. 

\begin{example}\label{ex:5.1}
Let $G_1 = \mathbb{Q}_p^k \rtimes \mathbb{Z}^k$, 
where the action of $\mathbb{Z}^k$ on $\mathbb{Q}_p^k$ is 
$$
(n_1, \dots, n_k).(y_1,\dots, y_k)  = 
(p^{-n_1}y_1,\dots, p^{-n_k}y_k), \quad (n_j\in \mathbb{Z}, \ y_j\in \mathbb{Q}_p).
$$
Then $H = \mathbb{Z}^k$ is a flat subgroup of $G$ and $U = \mathbb{Z}_p^k$ is tidy for $H$. 
For $x = (n_1,\dots, n_k)\in H$ put $m(x) = \sum_{n_j\geq0} n_j$. 
Then the scale of $x$ is
\begin{equation}
\label{eq:scale_of_x}
s(x) = p^{m(x)}.
\end{equation}

It follows from (\ref{eq:scale_of_x}) that the subsemigroups of $H$ 
that are multiplicative over $U$ correspond to the subsets of $\{1,\dots. k\}$. 
For each such subset, $J$ say, put
$$
P_J = \left\{ (n_1, \dots, n_k)\in \mathbb{Z}^k \mid 
n_j\geq0\hbox{ if }j\in J, \ n_j\leq 0\hbox{ if }j\not\in J\right\}.
$$
Then $P_J$ is multiplicative over $U$ and the minimal generating set for $P_J$ is, 
denoting by $e_j$ the standard basis vector whose only nontrivial entry is a~$1$ in the $j^{\text{th}}$ position,
$$
\Sigma_J = \{e_j \mid j\in J\} \cup \{-e_j \mid j\not\in J\}.
$$
It follows that $P_J \cong \mathbb{N}^k$ for each $J$ and 
$$
U_+ = \left\{ (u_j)\in \mathbb{Z}_p^k \mid u_j = 0\hbox{ if }j\not\in J\right\}\text{ and }\ U_- = \left\{ (u_j)\in \mathbb{Z}_p^k \mid u_j = 0\hbox{ if }j\in J\right\}.
$$ 
For each $x=(m_1,\ldots,m_k)\in P_J$, the stabiliser of $\nu x$ in $U$ is~$(U_+)_{\nu x}(U_-)_{\nu x}$ where
$$
(U_+)_{\nu x} = \left\{ (p^{m_j}u_j)\in \mathbb{Z}_p^k \mid u_j\in \mathbb{Z}_p,\ u_j = 0\hbox{ if }j\not\in J\right\}\text{ and }\ (U_-)_{\nu x}= U_-,
$$ 
and we have that $\nu xU = U\backslash U\bigl(\prod_{j=1}^kp^{-m_j}\mathbb{Z}_p\bigr)x$. There is a truncation map $\mathsf{trun}_{x,y}$ if $y' = yx^{-1}$ belongs to $P_J$, that is, if $x = (m_1,\dots, m_k)$ and $y = (n_1,\dots ,n_k)$ satisfy $0\leq m_j\leq n_j$ for $j\in J$ and $0\geq m_j\geq n_j$ for $j\not\in J$. Then, for $(b_j)_{j=1}^k + U\in U\backslash U\bigl(\prod_{j=1}^kp^{-n_j}\mathbb{Z}_p\bigr)$,  
$$
\mathsf{trun}_{x,y} : 
\left((b_j)_{j=1}^k + U\right)y \mapsto  \left((p^{n_j-m_j}b_j)_{j=1}^k + U\right)x.
$$

For each generator $e_j$ (so that~$j\in J$),
$$
\ker(\mathsf{trun}_{x,x+e_j}) = \left\{(b_i)_{i=1}^j+U \mid b_i \in p^{-1}\mathbb{Z}_p\text{ if }i=j\text{ and }b_i \in \mathbb{Z}_p\text{ otherwise}\right\},
$$  
which has order~$p$, while for each generator $-e_j$ (so that~$j\not\in J$), 
$$
\ker(\mathsf{trun}_{x,x-e_j}) = \left\{(0)_{i=1}^j+U\right\},
$$  
which has order~$1$. 
It then follows from the definition in Theorem~\ref{thm:P-graph G}
that the graph~$\Gamma_{P_J}$ is isomorphic to the product of rooted trees $\prod_{j=1}^k \mathcal{T}_j$, where $\mathcal{T}_j$ is the regular tree with every vertex having $p$ children if $j\in J$ and the tree with every vertex having one child (isomorphic to $\mathbb{N}$) if $j\not\in J$. 
In particular, if $J = \{1,\dots, k\}$, then $\Gamma_{P_J}$ is $\mathcal{T}^k$, 
where $\mathcal{T}$ is the tree where every vertex has $p$ children. 
If $J=\emptyset$, then $\nu xU$ has one vertex for every $x$ and 
$\Gamma_{P_J}$ is $\mathbb{N}^k$, that is, isomorphic to $P_J$. 

When $k=1$ and $J = \{1\}$, $P_J = \mathbb{N}$ and 
the graph $\Gamma_{P_J}$ is a  rooted tree 
in which every out-valency is~$p$. 
There are thus $p^j$ vertices with degree~$j$, corresponding to the cosets $p^{-j} \mathbb{Z}_p/\mathbb{Z}_p \cong C_{p^j}$. 
We have illustrated this schematically in Figure~\ref{fig:example_1a} shows the graph where the labels $C_{p^j}$ should be interpreted as a set of vertices labelled by $C_{p^j}$ with each connected to precisely one ancestor and $p$ descendents.
\begin{figure}[htbp]
\begin{center}
\begin{picture}(280,60)(0,-20)
\put(0,0){$\{0\}$}
\put(25,3){\vector(1,0){30}}
\put(65,0){$C_p$}
\put(85,3){\vector(1,0){30}}
\put(130,0){$C_{p^2} $}
\put(160,3){\vector(1,0){30}}
\put(200,0){$C_{p^3} $}
\put(220,3){\vector(1,0){30}}
\end{picture}
\caption{$\Gamma_{P_J}$ for $G = \mathbb{Q}_p\rtimes \mathbb{Z}$ and $J=\{1\}$.}
\label{fig:example_1a}
\end{center}
\end{figure}

Figure~\ref{fig:example_1b} shows the graph $\Gamma_{P_J}$ with $k=2$ and $J = \{1,2\}$, so that $P_J = \mathbb{N}^2$. This is the graph product of two rooted trees both having out valencies equal to~$p$. The vertices with degree $(n_1,n_2)$ correspond to the cosets $(p^{-n_1}\mathbb{Z}_p)/\mathbb{Z}_p \times (p^{-n_2}\mathbb{Z}_p)/\mathbb{Z}_p \cong C_{p^{n_1}}\times C_{p^{n_2}}$. When~$k=2$, there are three other multiplicative semigroups for the cases when $J = \emptyset$, $\{1\}$ and $\{2\}$. When $J = \{2\}$, for example, the degree map projects $\Gamma_{P_J}$ onto $\mathbb{N}^2$, as in Figure~\ref{fig:example_1b}, but the vertices with degree $(n_1,n_2)$ correspond to $d^{-1}(n_1,n_2) = \{0\}\times C_{p^{n_2}}$. 

The similarity between Example~\ref{ex:Cartdec} and this multiplicative semigroup and its $P$-graph may be seen by writing $G$ as $(\mathbb{Q}_p\rtimes \mathbb{Z})^k$. It is also illuminating to form direct products of different groups. Taking $G = \left(\mathbb{Q}_{p_1}\rtimes \mathbb{Z}\right)\times \left(\mathbb{Q}_{p_2}\rtimes \mathbb{Z}\right)$ with $p_1$ and $p_2$ distinct primes and $J = \{1,2\}$ yields a $P$-graph similar to that shown in Figure~\ref{fig:example_1b} but with $d^{-1}(n_1,n_2) \cong C_{p_1^{n_1}}\times C_{p_2^{n_2}}$.

\begin{figure}[htbp]
\begin{center}
\begin{picture}(280,140)(-20,0)
\put(0,0){$\{0\}$}
\put(40,3){\vector(1,0){30}}
\put(80,0){$C_p \times \{0\}$}
\put(130,3){\vector(1,0){30}}
\put(170,0){$C_{p^2} \times \{0\}$}
\put(230,3){\vector(1,0){30}}
\put(8,15){\vector(0,1){25}}
\put(100,15){\vector(0,1){25}}
\put(194,15){\vector(0,1){25}}
\put(-15,50){$\{0\}\times C_p$}
\put(40,53){\vector(1,0){30}}
\put(80,50){$C_p\times C_p$}
\put(130,53){\vector(1,0){30}}
\put(170,50){$C_{p^2} \times C_{p}$}
\put(230,53){\vector(1,0){30}}
\put(8,65){\vector(0,1){25}}
\put(100,65){\vector(0,1){25}}
\put(194,65){\vector(0,1){25}}
\put(-15,100){$\{0\}\times C_{p^2}$}
\put(40,103){\vector(1,0){30}}
\put(80,100){$C_p \times C_{p^2}$}
\put(130,103){\vector(1,0){30}}
\put(170,100){$C_{p^2}\times C_{p^2}$}
\put(230,103){\vector(1,0){30}}
\put(8,115){\vector(0,1){25}}
\put(100,115){\vector(0,1){25}}
\put(194,115){\vector(0,1){25}}
\end{picture}
\caption{$\Gamma_{P_J}$ for $G = \mathbb{Q}_p^2 \rtimes \mathbb{Z}^2$ and $J = \{1,2\}$.}
\label{fig:example_1b}
\end{center}
\end{figure}

\end{example}

The $P$-graph of a multiplicative semigroup $P$ is 
not always a product of trees even if $P$ is isomorphic to $\mathbb{N}^k$, as seen in the next example. Unlike the example just discussed, it is necessary in the next two examples that the open normal subgroup be a product of copies of $\mathbb{Q}_p$ for the same~$p$. The coprimality condition implying that $\Gamma_P$ is a product of trees given in Proposition~\ref{prop:coprime_scales} shows this necessity.  
\begin{example} 
\label{ex:multi_ex}
Let $G_2 = \mathbb{Q}_p^3 \rtimes \mathbb{Z}^2$, 
where the action of $\mathbb{Z}^2$ on  $\mathbb{Q}_p^3$ is defined by 
extending the following actions of the standard basis vectors for~$\mathbb{Z}^2$:
$$
(a,b,c)^{e_1} = (p^{-1}a,p^{-1}b,c)\hbox{ and }(a,b,c)^{e_2} = (a,p^{-1}b,p^{-1}c).
$$
Then $H = \{0\} \rtimes \mathbb{Z}^2$ 
is flat and 
$U := \mathbb{Z}_p^3\rtimes \{0\}$ is tidy for $H$. 
The factoring of $U$ described in Theorem~\ref{thm:flat group decomp} is $U=U_1U_2U_3$, where $U_j\leq \mathbb{Z}_p^3$ is non-zero in the $j^{\text{th}}$ factor and zero in the others. The action of $H$ is such that the subsets $\{1,3\}$ and $\{2\}$ cannot occur as a set~$J_P^+$ in Definition~\ref{JP+ and JP-}. The remaining subsets of $\{1,2,3\}$ do occur and the subsemigroups of $H$ multiplicative over $U$ are set out in Table~\ref{table:Example_2}. The semigroup~$P$ is described in terms of all $n_1,n_2$ such that $e_1^{n_1}e_2^{n_2}\in P$ and by their generating sets $\Sigma$. The value of the scale on $P$ is also given. 

\begin{table}[htp]
\caption{Multiplicative subsemigroups of $H =  \{0\} \rtimes \mathbb{Z}^2\leq \mathbb{Q}_p^3 \rtimes \mathbb{Z}^2 = G_2$}
\begin{center}
\begin{tabular}{|c|c|c|c|}
\hline
$J_P^+$ & $e_1^{n_1}e_2^{n_2}\in P$ & $\Sigma$ & $s(e_1^{n_1}e_2^{n_2})$ \\
\hline\hline
${\{1,2,3\}}$ & $n_1, n_2 \geq 0$ &$e_1$, $e_2$ & $p^{2n_1+2n_2}$\\
\hline
${\{1,2\}}$ & $n_1\geq n_2$, $n_2\leq0$ & $e_1$, $e_1-e_2$ & $p^{2n_1 + n_2}$\\
\hline
${\{2,3\}}$ & $n_1\leq0$, $n_2\geq -n_1$ & $-e_1+e_2$, $e_2$ & $p^{n_1 + 2n_2}$\\
\hline
${\{1\}}$ & $0\leq n_1 \leq -n_2$, $n_2\leq0$ & $e_1-e_2$, $-e_2$ & $p^{n_1}$\\
\hline
${\{3\}}$ & $n_1\leq0$, $0\leq n_2\leq -n_1$ & $-e_1$, $-e_1+e_2$ & $p^{n_2}$\\
\hline
$\emptyset$ & $n_1,n_2\leq 0$ & $-e_1$, $-e_2$ & 1\\
\hline
\end{tabular}
\end{center}
\label{table:Example_2}
\end{table}%

The graph $\Gamma_{P_\emptyset}$ is the 
directed Cayley graph of $(\mathbb{N}^2,\Sigma)$, 
that is, $\mathbb{N}^2$ with its standard generators, and 
$\mathscr{G}$ is the $\mathbb{N}^2$- graph (or $2$-graph) $\mathbb{N}^2$. 

The vertices of the graph $\Gamma_{P_{\{1,2,3\}}}$  are 
$$
V(\Gamma_{P_{\{1,2,3\}}}) = \bigcup_{(n_1,n_2)\in P_{\{1,2,3\}}} V_{(n_1,n_2)},
$$ 
where
\begin{equation}
\label{Ex2_vertices}
V_{(n_1,n_2)} =  \nu (e_1^{n_1}e_2^{n_2})U = \left( (p^{-n_1}\mathbb{Z}_p/\mathbb{Z}_p)\times (p^{-n_1-n_2}\mathbb{Z}_p / \mathbb{Z}_p) \times (p^{-n_2}\mathbb{Z}_p / \mathbb{Z}_p)\right)(e_1^{n_1}e_2^{n_2})
\end{equation}
has order $p^{2n_1+2n_2}$. The truncation map $\mathsf{trun}_{(e_1^{m_1}e_2^{m_2}),(e_1^{n_1}e_2^{n_2})} : V_{(n_1,n_2)} \to V_{(m_1,m_2)}$ 
(for $m_1\leq n_1$, $m_2\leq n_2$) is 
\begin{align}\label{p-adic truncation}
&\phantom{aa}(a + \mathbb{Z}_p, b + \mathbb{Z}_p, c + \mathbb{Z}_p)(e_1^{n_1}e_2^{n_2}) \mapsto \\
&(p^{n_1-m_1}a + \mathbb{Z}_p, p^{n_1-m_1+n_2-m_2}b + \mathbb{Z}_p, p^{n_2-m_2}c + \mathbb{Z}_p)(e_1^{m_1}e_2^{m_2}).\notag
\end{align}
It will now be seen that $\mathscr{G}$ and $\Gamma_P$ are not products of trees.

It follows from~\eqref{Ex2_vertices} that 
\begin{align*}
V_{(1,0)} &= \left( (p^{-1}\mathbb{Z}_p/\mathbb{Z}_p)\times (p^{-1}\mathbb{Z}_p / \mathbb{Z}_p) \times (\mathbb{Z}_p / \mathbb{Z}_p)\right)(e_1)\\
\text{ and }V_{(1,1)} &= \left( (p^{-1}\mathbb{Z}_p/\mathbb{Z}_p)\times (p^{-2}\mathbb{Z}_p / \mathbb{Z}_p) \times (p^{-1}\mathbb{Z}_p / \mathbb{Z}_p)\right)(e_1e_2)
\end{align*} 
and from~\eqref{p-adic truncation} that the edge set between 
$V_{(1,0)}$ and $V_{(1,1)}$ is the graph of $\mathsf{trun}_{(e_1),(e_1e_2)}^{-1}$ where
$$
\mathsf{trun}_{(e_1),(e_1e_2)} : (a + \mathbb{Z}_p, b + \mathbb{Z}_p, c + \mathbb{Z}_p)(e_1e_2) \mapsto 
(a + \mathbb{Z}_p, pb + \mathbb{Z}_p, pc + \mathbb{Z}_p)(e_1).
$$
Similarly, the edge set between $V_{(0,1)}$ and $V_{(1,1)}$ is the graph of 
$\mathsf{trun}_{(e_2),(e_1e_2)}^{-1}$ where
$$
\mathsf{trun}_{(e_2),(e_1e_2)} : (a + \mathbb{Z}_p, b + \mathbb{Z}_p, c + \mathbb{Z}_p)(e_1e_2) \mapsto 
(pa + \mathbb{Z}_p, pb + \mathbb{Z}_p, c + \mathbb{Z}_p)(e_2).
$$
Hence there are edges from the vertex 
$(a_0p^{-1} + \mathbb{Z}_p, b_0p^{-1} + \mathbb{Z}_p, \mathbb{Z}_p)(e_1)$ in $V_{(1,0)}$ to the $p^2$ vertices
$$
(a_0p^{-1} + \mathbb{Z}_p, b_0p^{-2} + b_1p^{-1} + \mathbb{Z}_p, c_0p^{-1} + \mathbb{Z}_p), \quad b_1,c_0\in \{0,1,\dots, p-1\},
$$
in $V_{(1,1)}$ and there are edges from the vertex 
$(\mathbb{Z}_p, b_0p^{-1} + \mathbb{Z}_p, c_0p^{-1} + \mathbb{Z}_p)(e_2)$ in $V_{(0,1)}$ to the $p^2$ vertices
$$
(a_0p^{-1} + \mathbb{Z}_p, b_0p^{-2} + b_1p^{-1} + \mathbb{Z}_p, c_0p^{-1} + \mathbb{Z}_p), \quad a_0,b_1\in \{0,1,\dots, p-1\},
$$
in $V_{(1,1)}$. 
Therefore the vertices $(a_0p^{-1} + \mathbb{Z}_p, b_0p^{-1} + \mathbb{Z}_p, \mathbb{Z}_p)$ 
in $V_{(1,0)}$ and $(\mathbb{Z}_p, b_0'p^{-1} + \mathbb{Z}_p, c_0p^{-1} + \mathbb{Z}_p)$ in $V_{(0,1)}$ 
have $p$ common descendants in $V_{(1,1)}$ if 
$b_0 = b_0'$ and no common descendants otherwise. 
In contrast, if the graph were a product of trees, 
then each vertex in $V_{(1,0)}$ would have $p^2$ descendants and 
exactly one common descendant with each of the $p^2$ vertices in $V_{(0,1)}$.

Another interpretation of $\Gamma_{P_{\{1,2,3\}}}$ may be seen in Figure~\ref{fig:example_2}. The semigroup $P_{\{1,2,3\}}$ is isomorphic to $\mathbb{N}^2$ and the diagram is the Cayley graph of $\mathbb{N}^2$ with vertices labelled by $d^{-1}(n_1,n_2) = C_{p^{n_1}}\times C_{p^{n_1+n_2}}\times C_{p^{n_2}}$, where $C_{p^m}$ denotes $p^{-m}\mathbb{Z}/\mathbb{Z}$. The 
truncation maps are the natural coordinatewise projections, 
{\it e.g.\/}, $\mathsf{trun}_{(e_1),(e_1e_2)} : C_p\times C_{p^2} \times C_p \to \{0\}\times C_p\times C_p$ by mapping: $C_p\to \{0\}$ in the first coordinate; $C_{p^2}\to C_p$ by quotienting by $pC_{p^2}$ in the second coordinate; and the identity map in the third coordinate. 
\begin{figure}[htbp]
\begin{center}
\begin{picture}(320,140)(-20,0)
\put(0,0){$\{0\}$}
\put(60,3){\vector(1,0){30}}
\put(100,0){$C_p\times C_p \times \{0\}$}
\put(190,3){\vector(1,0){30}}
\put(230,0){$C_{p^2}\times C_{p^2} \times \{0\}$}
\put(320,3){\vector(1,0){30}}
\put(8,15){\vector(0,1){25}}
\put(130,15){\vector(0,1){25}}
\put(265,15){\vector(0,1){25}}
\put(-30,50){$\{0\}\times C_p\times C_p$}
\put(60,53){\vector(1,0){30}}
\put(100,50){$C_p\times C_{p^2} \times C_p$}
\put(190,53){\vector(1,0){30}}
\put(230,50){$C_{p^2}\times C_{p^3} \times C_{p}$}
\put(320,53){\vector(1,0){30}}
\put(8,65){\vector(0,1){25}}
\put(130,65){\vector(0,1){25}}
\put(265,65){\vector(0,1){25}}
\put(-30,100){$\{0\}\times C_{p^2}\times C_{p^2}$}
\put(60,103){\vector(1,0){30}}
\put(100,100){$C_p\times C_{p^3} \times C_{p^2}$}
\put(190,103){\vector(1,0){30}}
\put(230,100){$C_{p^2}\times C_{p^4} \times C_{p^2}$}
\put(320,103){\vector(1,0){30}}
\put(8,115){\vector(0,1){25}}
\put(130,115){\vector(0,1){25}}
\put(265,115){\vector(0,1){25}}
\end{picture}
\caption{$\Gamma_{P_{J}}$ for $G_2$ and $J = \{1,2,3\}$. Not a product of trees.}
\label{fig:example_2}
\end{center}
\end{figure}
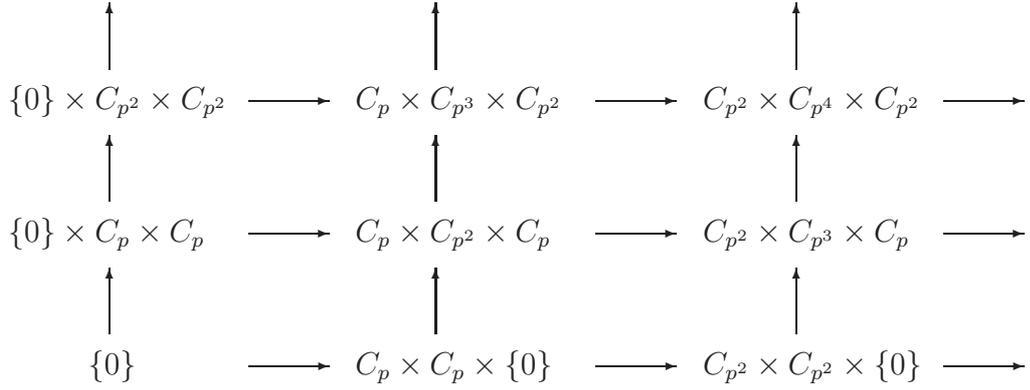
Each vertex in $C_p\times C_{p^2} \times C_p$ thus projects to vertices in $\{0\}\times C_p\times C_p$ and $ C_p\times C_p\times \{0\}$ having the same second coordinate and there are $p$ vertices in $\{0\}\times C_p\times C_p$ and $ C_p\times C_p\times \{0\}$ projecting to the same pair. On the other hand, if the graph were a product of trees, then every pair of vertices, one from $\{0\}\times C_p\times C_p$ and one from $ C_p\times C_p\times \{0\}$, would be the image under the projection of exactly one vertex in $C_p\times C_{p^2} \times C_p$. 

\end{example}

In the previous examples, the semigroup $P$ was isomorphic to $\mathbb{N}^k$. 
That that is not always the case is shown by the next example. 
\begin{example}
\label{ex:not_Nk}
Let $G_3 = \mathbb{Q}_p^2 \rtimes \mathbb{Z}^2$, 
where the action of $\mathbb{Z}^2$ on  $\mathbb{Q}_p^2$ is defined by 
extending the following actions of the standard basis vectors:
$$
(a,b)^{e_1} = (p^{-1}a,p^{-1}b)\hbox{ and }
(a,b)^{e_2} = (p^{-1}a,pb).
$$
Then $H = \{0\}\rtimes\mathbb{Z}^2)$ 
is flat and 
$U := \mathbb{Z}_p^2\rtimes \{0\}$ is tidy for~$H$. The factoring of $U$ described in Theorem~\ref{thm:flat group decomp} is $U = U_1U_2$, where $U_1$ and $U_2$ are supported on the first and second coordinates respectively. All subsets of $\{1,2\}$ may occur as $J_P^+$ in Definition~\ref{JP+ and JP-} and the corresponding semigroups are set out in Table~\ref{table:Example_3}. 
\begin{table}[htp]
\caption{ Multiplicative subsemigroups of $H = \{0\}\rtimes \mathbb{Z}^2 \leq \mathbb{Q}_p^2 \rtimes \mathbb{Z}^2 = G_3$}
\begin{center}
\begin{tabular}{|c|c|c|c|}
\hline
$J_P^+$ & $e_1^{n_1}e_2^{n_2}\in P$ & $\Sigma$ & $s(n_1,n_2)$ \\
\hline\hline
${\{1,2\}}$ & $n_1+ n_2, n_1-n_2\geq0$ &$e_1-e_2$, $e_1$, $e_1+e_2$ & $p^{2n_1}$\\
\hline
${\{1\}}$ & $n_1+ n_2 \geq 0$, $n_1-n_2\leq0$ & $-e_1+e_2$, $e_2$, $e_1+e_2$ & $p^{n_1 + n_2}$\\
\hline
${\{2\}}$ & $n_1+ n_2 \leq 0$, $n_1-n_2\geq0$ & $-e_1-e_2$, $-e_2$, $e_1-e_2$ & $p^{n_1 - n_2}$\\
\hline
$\emptyset$ & $n_1+ n_2, n_1-n_2\leq0$ & $-e_1+e_2$, $-e_1$, $-e_1-e_2$ & 1\\
\hline
\end{tabular}
\end{center}
\label{table:Example_3}
\end{table}%

For each of these semigroups, the minimal generating set $\Sigma$ has three elements and so the semigroup is not isomorphic to $\mathbb{N}^2$. The pair $(H,P_{\{1,2\}})$ is not quasi-lattice ordered in this case. For example, the pair of elements $e_1$ and $e_1-e_2$ share the common upper bounds $2e_1$ and $2e_1-e_1$  but both are minimal and so there is no least upper bound.
 
The graph $\Gamma_{P_\emptyset}$ is the Cayley graph $X(P,\Sigma)$ 
and the $P$-graph $\mathscr{G}$ is isomorphic to~$\mathscr{P}$. This is the graph shown in Figure~\ref{fig:example_3} but with single points in place of the labels given.  

We describe the graph $\Gamma_{P_{\{1,2\}}}$ in more detail. Its vertices are 
$$
V(\Gamma_{P_{\{1,2\}}}) = \bigcup_{(n_1,n_2)\in P_{\{1,2\}}} V_{(n_1,n_2)},
$$ 
where
$$
V_{(n_1,n_2)} = \nu (e_1^{n_1}e_1^{n_2})U =  \left((p^{-n_1-n_2}\mathbb{Z}_p/\mathbb{Z}_p)\times (p^{-n_1+n_2}\mathbb{Z}_p/\mathbb{Z}_p)\right)(e_1^{n_1}e_2^{n_2}),
$$
which has order $p^{2n_2}$. The truncation map $\mathsf{trun}_{(e_1^{m_1}e_2^{m_2}),(e_1^{n_1}e_2^{n_2})} : V_{(n_1,n_2)} \to V_{(m_1,m_2)}$ (for $m_1+m_2\leq n_1+n_2$ and $m_1-m_2\leq n_1 - n_2$) is 
$$
(a + \mathbb{Z}_p, b + \mathbb{Z}_p)(e_1^{n_1}e_2^{n_2}) 
\mapsto (p^{n_1-m_1+n_2-m_2}a + \mathbb{Z}_p, p^{n_1-m_1-n_2+m_2}b + \mathbb{Z}_p)(e_1^{m_1}e_2^{m_2}).
$$

Figure~\ref{fig:example_3} shows the graph $\Gamma_{P_{\{1,2\}}}$. 
The semigroup $P_{\{1,2\}}$ consists of all $(n_1,n_2)\in \mathbb{Z}^2$ with $n_1+n_2$ and $n_1-n_2$ both non-negative. In Figure~\ref{fig:example_3}, the vertex of the Cayley graph of $P_{\{1,2\}}$ with respect to the generating set $\left\{(1,1),(1,0),(1,-1)\right\}$ is labelled by $d^{-1}(n_1,n_2) = C_{p^{n_1+n_2}}\times C_{p^{n_1-n_2}}$. 

\begin{figure}[htbp]
\begin{center}
\begin{picture}(220,300)(40,-150)
\put(0,0){$\{0\}$}
\put(40,3){\vector(1,0){40}}
\put(40,13){\vector(1,1){40}}
\put(40,-10){\vector(1,-1){40}}
\put(90,0){$C_p\times C_p$}
\put(150,3){\vector(1,0){40}}
\put(150,13){\vector(1,1){40}}
\put(150,-10){\vector(1,-1){40}}
\put(200,0){$C_{p^2}\times C_{p^2} $}
\put(260,3){\vector(1,0){40}}
\put(260,13){\vector(1,1){40}}
\put(260,-10){\vector(1,-1){40}}
\put(90,60){$C_{p^2}\times \{0\}$}
\put(150,63){\vector(1,0){40}}
\put(150,73){\vector(1,1){40}}
\put(150,53){\vector(1,-1){40}}
\put(200,60){$C_{p^3}\times C_{p}$}
\put(260,63){\vector(1,0){40}}
\put(260,73){\vector(1,1){40}}
\put(260,53){\vector(1,-1){40}}
\put(200,120){$C_{p^4} \times \{0\}$}
\put(260,123){\vector(1,0){40}}
\put(260,133){\vector(1,1){40}}
\put(260,113){\vector(1,-1){40}}
\put(90,-60){$\{0\} \times C_{p^2}$}
\put(150,-57){\vector(1,0){40}}
\put(150,-47){\vector(1,1){40}}
\put(150,-67){\vector(1,-1){40}}
\put(200,-60){$C_{p}\times C_{p^3}$}
\put(260,-57){\vector(1,0){40}}
\put(260,-47){\vector(1,1){40}}
\put(260,-67){\vector(1,-1){40}}
\put(200,-120){$\{0\} \times C_{p^4}$}
\put(260,-117){\vector(1,0){40}}
\put(260,-107){\vector(1,1){40}}
\put(260,-127){\vector(1,-1){40}}
\end{picture}
\caption{$\Gamma_{P_J}$ for $G_3$ and $J = \{1,2\}$. $P_J$ is not free.}
\label{fig:example_3}
\end{center}
\end{figure}
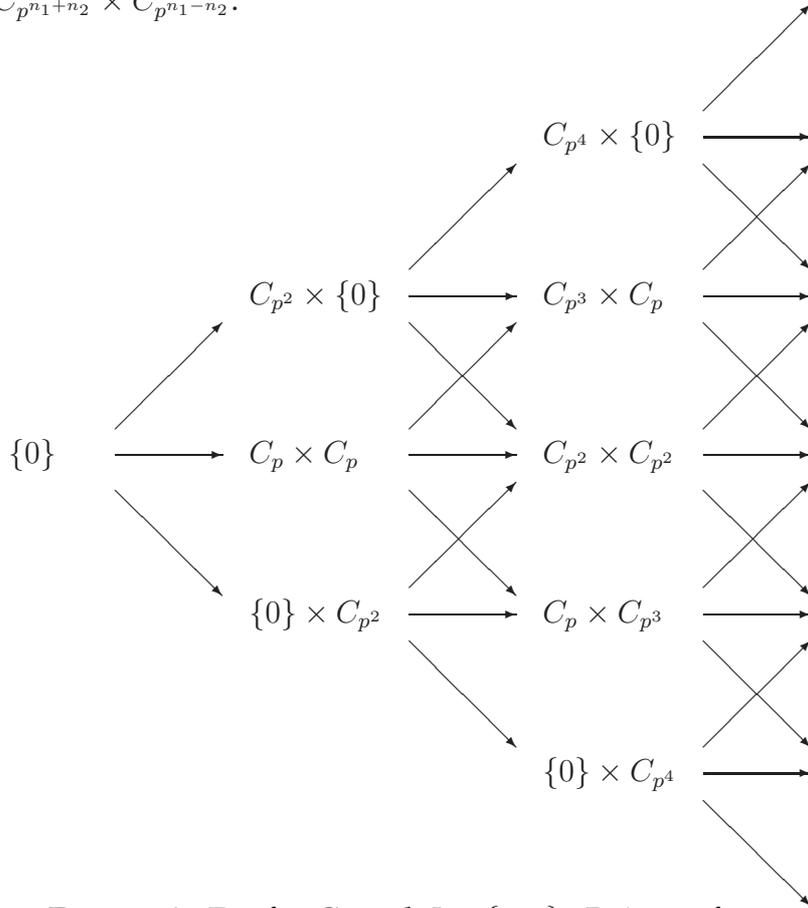

Although $\Gamma_{P_{\{1,2\}}}$ cannot possibly be a product of trees, 
it is \emph{virtually} a product of trees in the sense that 
there is an index~$2$ subsemigroup, $$
Q = \left\{ (n_1,n_2)\in P_{\{1,2\}} \mid n_1+n_2\in 2\mathbb{Z}\right\},
$$
such that $\Gamma_Q$ is a product, $\mathcal{T}_p\times \mathcal{T}_p$, 
of rooted trees where each vertex has $p$ children. 

\end{example}

\subsection{Automorphisms with coprime scales and products of trees}
\label{sec:tree_prod}

In Examples~\ref{ex:multi_ex} and~\ref{ex:not_Nk}, in which $\mathscr{G}$ and $\Gamma_P$ are not products of trees, the relative scale on each of the factors $U_j$ of the tidy subgroup $U$ is always a power of $p$. It is seen next that it is a necessary feature of any such examples that the relative scales not be coprime. 
\begin{proposition}
\label{prop:coprime_scales}
Suppose that $x,y\in G$ commute and that $s(x)s(x^{-1})$ and $s(y)s(y^{-1})$ are both not equal to~$1$ and are coprime. 
Let $U$ be tidy for the flat group $\langle x,y\rangle$. Then the tidy factoring of $U$ in Theorem~\ref{thm:flat group decomp} is
\begin{equation}
\label{eq:coprime_product}
U = U_{x+}U_{y+}U_{x-}U_{y-},
\end{equation}
where:\quad $xU_{x+}x^{-1} \geq U_{x+},\  xU_{x-}x^{-1} \leq U_{x-}\text{ and } y\text{ normalises }U_{x+}\text{ and }U_{x-}$;\\
and \qquad $yU_{y+}y^{-1} \geq U_{y+}\ yU_{y-}y^{-1} \leq U_{y-} \text{ and } x\text{ normalises } U_{y+} \text{ and } U_{y-}$.\\
Some of the factors in~\eqref{eq:coprime_product} may be trivial but at least one of $U_{x\pm}$ and one of $U_{y\pm}$ is not trivial.

When all four factors are non-trivial, there are four subsemigroups of $\langle x,y\rangle$ multiplicative over $U$ with expanding sets $J_P^+$, as in Definition~\ref{JP+ and JP-}, equal to $\{x+,y+\}$, $\{x+,y-\}$, $\{x-,y+\}$ and $\{x-,y-\}$. For each of these subsemigroups~$P$, the $P$-graph defined in Theorem~\ref{thm:P-graph G} is a product of rooted regular trees. 
\end{proposition}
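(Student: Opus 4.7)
The plan is to apply Theorem~\ref{thm:flat group decomp} and Lemma~\ref{details of decomp} to the flat group $H=\langle x,y\rangle$, and exploit the coprimality hypothesis to show that every component of the tidy decomposition of $U$ is ``pure'': it is acted on non-trivially by exactly one of $x$ or $y$. The four-fold factorization of $U$ and the product-of-trees structure of the $P$-graphs will then drop out.

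Write $U=U_0U_1\cdots U_q$ with homomorphisms $\rho_j\colon H\to\mathbb{Z}$ and scales $s_j\geq 2$ as in Theorem~\ref{thm:flat group decomp} and Lemma~\ref{details of decomp}. The scale formula yields
\[
s(x)s(x^{-1})=\prod_{j=1}^q s_j^{|\rho_j(x)|}\quad\text{and}\quad s(y)s(y^{-1})=\prod_{j=1}^q s_j^{|\rho_j(y)|},
\]
so coprimality (together with $s_j\geq 2$) prevents $\rho_j(x)$ and $\rho_j(y)$ from both being non-zero for the same $j$. Combined with the requirement (from Theorem~\ref{thm:flat group decomp}) that $\rho_j$ is non-trivial on $H=\langle x,y\rangle$, exactly one of them is non-zero for each $j\in\{1,\dots,q\}$, partitioning $\{1,\dots,q\}$ into $J_x^+,J_x^-,J_y^+,J_y^-$. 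For $j,j'$ in the same part, the conditions $\rho_j(z)\geq 0$ and $\rho_{j'}(z)\geq 0$ cut out the same subset of $H$, so the intersection formula in Theorem~\ref{thm:flat group decomp} forces $U_j=U_{j'}$; denote the common values by $U_{x+},U_{x-},U_{y+},U_{y-}$. The assumptions $s(x)s(x^{-1})>1$ and $s(y)s(y^{-1})>1$ force at least one of $U_{x\pm}$ and at least one of $U_{y\pm}$ to be non-trivial.

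For the factorization, observe that the partition together with the scale formula makes $P_{++}:=\{x^ay^b : a,b\geq 0\}$ $s$-multiplicative over $U$, since $\rho_j(x^ay^b)$ has a consistent sign on each part and no cancellations occur in $s(z_1)s(z_2)=s(z_1z_2)$. Proposition~\ref{prop:common_factor} then yields $U=U_+(P_{++})U_-(P_{++})$ with $U_+(P_{++})=U_{x+}U_{y+}$ and $U_-(P_{++})=U_{x-}U_{y-}$, proving $U=U_{x+}U_{y+}U_{x-}U_{y-}$. The stated expansion/contraction/normalization properties are immediate from the defining intersection formulas: for instance $yU_{x\pm}y^{-1}=U_{x\pm}$ because $\rho_j(y)=0$ for $j\in J_x^\pm$, and symmetrically for the action of $x$ on $U_{y\pm}$.

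When all four factors are non-trivial, each of the four multiplicative subsemigroups is a closed quadrant of $H/H(1)\cong\mathbb{Z}^2$ and hence isomorphic to $\mathbb{N}^2$, with minimal generating set consisting of one element expanding one of $U_{x\pm}$ and another expanding one of $U_{y\pm}$. For such a $P$ with generators $z_x,z_y$, let $\mathscr{G}$ be the $P$-graph from Theorem~\ref{thm:P-graph G}, and let $\mathscr{G}_x,\mathscr{G}_y$ be the M\"oller $\mathbb{N}$-graphs of the cyclic subsemigroups $\langle z_x\rangle_+,\langle z_y\rangle_+$ (each a rooted regular tree). The plan is to exhibit $\mathscr{G}\cong\mathscr{G}_x\times\mathscr{G}_y$ via Lemma~\ref{lem: product of rooted strongly-simple is rooted strongly-simple}: the four-fold factorization of $U$ and the normalization properties make the coset $\nu x^ay^bU$ split independently into an $x$-coset times a $y$-coset, and both the vertex set and the truncation maps in $\mathscr{G}$ factor coordinate-wise. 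The main technical obstacle is this last step: pinning down the explicit bijection on cosets and verifying that the edges and morphisms of $\mathscr{G}$ match those of the product $P$-graph. The key leverage is that $x^a$ normalizes $U_{y\pm}$ and $y^b$ normalizes $U_{x\pm}$, which makes the $H$-action on $U$-cosets diagonal and thereby forces the required product decomposition.
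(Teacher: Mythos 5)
Your argument reaches the same conclusions as the paper but by a partly different mechanism, and it is essentially sound. The opening step coincides with the paper's: $s(z)s(z^{-1})=\prod_j s_j^{|\rho_j(z)|}$ together with coprimality shows no index $j$ has both $\rho_j(x)\neq0$ and $\rho_j(y)\neq0$, and nontriviality of each $\rho_j$ on $\langle x,y\rangle$ partitions $\{1,\dots,q\}$ into the four parts. From there the paper proceeds by two nested tidy factorizations: $U=U_+U_-$ with respect to $z=xy$, then (after noting $U_+$ is tidy for the action on the closed group $\bigcup_n z^nU_+z^{-n}$) $U_+=U_{x+}U_{y+}$ with respect to $w=xy^{-1}$, similarly for $U_-$, and finally argues that no further refinement occurs. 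You instead observe that the uniqueness formula $U_j=\bigcap\{zUz^{-1}\mid z\in H,\ zU_jz^{-1}\geq U_j\}$ in Theorem~\ref{thm:flat group decomp} depends only on the set $\{z\in H\mid\rho_j(z)\geq0\}$, which is the same for all $j$ in a given part, so the components collapse to at most four subgroups, and then you order them via Proposition~\ref{prop:common_factor} applied to a quadrant semigroup; the normalisation statements follow at once from $\rho_j(y)=0$ for $j\in J_x^{\pm}$ and symmetrically. This buys something the paper has to argue separately: your $U_{x\pm},U_{y\pm}$ are literally components of the canonical decomposition, so the assertion that \eqref{eq:coprime_product} \emph{is} the tidy factoring needs no ``no further refinement'' step. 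Two small patches are needed: Proposition~\ref{prop:common_factor} is stated for \emph{maximal} $s$-multiplicative semigroups, so either verify that the quadrant is maximal (easy: adjoining $x^ay^b$ with, say, $a<0$ forces some $j\in J_x^{\pm}$ into both $J^+$ and $J^-$, contradicting Lemma~\ref{lem:multiplicative_supports}) or pass to a maximal extension, which has the same $J_P^{\pm}$; and your identification $U_+=U_{x+}U_{y+}$ uses the formula $U_+=\prod\{U_j\mid j\notin J_P^-\}$ from the \emph{proof}, not the statement, of Proposition~\ref{prop:common_factor}. For the product-of-trees claim, your plan (the coset $\nu x^ay^bU$ splits because $x$ normalises $U_{y\pm}$ and $y$ normalises $U_{x\pm}$, then invoke the product construction of Section~\ref{sec:ProductsofPgraphs}) is exactly the intended mechanism; you leave the coset bijection and the compatibility with the truncation maps unverified, but the paper is equally brief at this point, appealing only to the situation of Examples~\ref{ex:Cartdec} and~\ref{ex:5.1}.
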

\begin{proof}
The existence of the subgroup $U$ tidy for $\langle x,y\rangle$ is 
guaranteed by~\cite[Theorem~5.5]{Wi:SimulTriang}. 
By~Theorem~\ref{thm:flat group decomp}, {\it i.e.\/} \cite[Theorem~6.8]{Wi:SimulTriang}, 
there is a non-negative integer~$q$ and subgroups $U_j$, $j\in \{0,1,\dots, q\}$, of $U$ such that 
\begin{equation}
\label{eq:factorU}
U = U_0U_1\dots U_q
\end{equation}
and, for every $z$ in $\langle x,y\rangle$ and $j\in\{0,1,\dots,q\}$, 
$zU_jz^{-1}$ either contains $U_j$ or is contained in $U_j$. 
Moreover, by Lemma~\ref{details of decomp},~\cite[Theorem~6.12]{Wi:SimulTriang}, the scale of $z$ is
$$
s(z) = \prod\left\{ |zU_jz^{-1}: U_j| \mid zU_jz^{-1}\geq U_j\right\}.
$$
Since $s(x)s(x^{-1})$ and $s(y)s(y^{-1})$ are coprime, the sets
\begin{eqnarray*}
X^+ &=& \left\{ j\in\{1,\dots,q\} \mid xU_jx^{-1} > U_j \right\},  \\ Y^+ &=& \left\{ j\in\{1,\dots,q\} \mid yU_jy^{-1} > U_j \right\}, \\
X^- &=& \left\{ j\in\{1,\dots,q\} \mid xU_jx^{-1} < U_j \right\}, \\ \hbox{and }\  Y^- &=&  \left\{ j\in\{1,\dots,q\} \mid yU_jy^{-1} < U_j \right\}
\end{eqnarray*}
are pairwise disjoint. 

The element $z := xy$ satisfies $zU_jz^{-1} > U_j$ for every $j\in X^+\cup Y^+$ 
and $zU_jz^{-1} < U_j$ for every $j\in X^-\cup Y^-$. 
Hence factoring $U$ as $U = U_+U_-$, 
where $U_+ = \bigcap_{n\geq0} z^{n}U z^{-n}$ and 
$U_- = \bigcap_{n\geq0} z^{- n}U z^{n}$, 
the factors $U_j$ in (\ref{eq:factorU}) may be arranged so that 
all factors in $X^+\cup Y^+$ appear first. 
Also, by Lemma~\ref{details of decomp}~(1),  $\bigcup_{n\in\mathbb{Z}} z^nU_+z^{-n}$ is a closed, 
$\langle x,y\rangle$-stable subgroup of $G$. 
Restricting the conjugation action of $\langle x,y\rangle$ to this subgroup, $U_+$ is tidy. 
The element $w = xy^{-1}$ satisfies $wU_jw^{-1}>U_j$ 
for every $j\in X^+$ and $wU_jw^{-1}<U_j$ for every $j\in Y^+$. 
Hence, putting $U_{x+} = \bigcap_{n\geq0} w^nU_+w^{-n}$ 
and $U_{y+} = \bigcap_{n\geq0} w^{-n}U_+w^{n}$ yields $U_+ = U_{x+}U_{y+}$. 
That $U_-$ factors as $U_- = U_{x-}U_{y-}$ may be shown similarly. Every element of $\langle x,y\rangle$ either expands or shrinks every one of the factors $U_{x\pm}$ and $U_{y\pm}$. Hence no further refinement of the factoring of~$U$ occurs and Equation~\eqref{eq:factorU} is in fact just Equation~\eqref{eq:coprime_product}.
 
When all four factors in Equation~\eqref{eq:coprime_product} are not trivial, any element $x^{n_1}y^{n_2}$ with $n_1$ and $n_2$ non-negative expands the factors $U_{x+}$ and $U_{y+}$. Hence, the semigroup $P$ of all such elements is multiplicative over~$U$ and $J_P = \{x+,y+\}$. The other semigroups multiplicative over~$U$ may be described similarly. Each of these semigroups is isomorphic to $\mathbb{N}^2$ with minimal generating sets $\{x,y\}$, $\{x,y^{-1}\}$, $\{x^{-1},y\}$ and $\{x^{-1},y^{-1}\}$ respectively. We are now in essentially the same position as in Examples~\ref{ex:Cartdec} and~\ref{ex:multi_ex} and the $P$-graph for each of these semigroups is a product of trees. In the case of $P = \left\{ x^{n_1}y^{n_2} \mid n_1,n_2\geq0\right\}$ for instance, we have  $d^{-1}(n_1,n_2) = C_{s^{n_1}}\times C_{s^{n_2}}$, where $s(x) = s$ and $s(y) = t$, and $\Gamma_P$ is isomorphic to the product of regular rooted trees with out-valencies~$s$ and~$t$ respectively.
\end{proof}

\section{Comments and Questions}
\label{sec:questions}

\begin{remark}
In~\cite{struc(tdlcG-graphs+permutations)}, M\"oller works only with positive powers of $x$. 
The fact that a subgroup $U$ that is tidy for $x$ is 
also tidy for $x^{-1}$ and for $\langle x\rangle$ means that 
checking multiplicativity of positive powers of $x$ over $U$ 
is enough to characterize tidiness for the flat group~$\langle x\rangle$. 

Multiplicativity over a semigroup does not imply tidiness. 
Example~3.5 in~\cite{Wi:SimulTriang} gives a flat group generated by 
commuting automorphisms $\alpha_1$, $\alpha_2$ and 
compact, open subgroup, $U$, such that 
the semigroup generated by $\alpha_1$ and $\alpha_2$ is multiplicative over $U$ 
but $U$ is not tidy for the group $\langle\alpha_1, \alpha_2\rangle$. 
In this example, $\alpha_1$ and $\alpha_2$ do not generate a free semigroup 
because $\alpha_1^2 = \alpha_2^2$. 
It may be that if the automorphisms are sufficiently independent and 
the semigroup they generate is multiplicative over $U$ 
then $U$ is tidy for the group that they generate. 
\end{remark} 

\begin{remark}
Suppose that $H$ is simply a subgroup of $G$ (not necessarily abelian) and 
$U$ is a compact open subgroup. Suppose that $P$ is a subsemigroup of $H$. 
Can a similar construction using cosets of $U$ be made to work? 
Can multiplicativity of $P$ over $U$ be characterized in terms of this graph? 
If $H$ is flat and $P$ is multiplicative over $U$, 
then~$U$ factors as $U = U_+U_-$, 
the image of $P$ in $H/H(1)$ is a subsemigroup of $\mathbb{Z}^k$, and 
the graph (or $P$-graph) can be defined in terms of the coset spaces $U_+/(x^{-1}U_+x)$, $x\in P$. 

It may be shown that finitely generated nilpotent groups are flat and so a version for $H$ nilpotent at least is desirable.
\end{remark}

\begin{remark}
The notion of a regular, rooted, strongly simple $P$-graph 
(over the subsemigroups of $\mathbb{Z}^k$ arising here) is defined abstractly. 
They are not all products of trees, even when $P$ is a product of $k$ copies of~$\mathbb{N}$. 
Can these $P$-graphs be classified? 

By construction, these $P$-graphs are acted on by $U$. 
In the examples $U = \mathbb{Z}_p^q$ for some $q>0$. 
What are the automorphism groups of regular, rooted, strongly simple $P$-graphs? 
Are they much larger than $\mathbb{Z}_p^q$ in the higher rank cases? 

\end{remark}

\end{document}